\newtheorem{thm}{Theorem}[section]
\newtheorem{proposition}[thm]{Proposition}
\newtheorem{defi}[thm]{Definition}
\newtheorem{rem}[thm]{Remark}
\newtheorem{lemma}[thm]{Lemma}
\newcommand{\R}{\mathbb{R}}
\newcommand{\ep}{\varepsilon}
\newcommand{\pa}{\partial}
\newcommand{\xb}{\textbf{x}}
\newcommand{\yb}{\textbf{y}}
\newcommand{\fb}{\textbf{f}}
\newcommand{\tb}{\textbf{t}}
\newcommand{\nb}{\textbf{n}}
\newcommand{\eb}{\textbf{e}}
\newcommand{\zb}{\textbf{z}}
\newcommand{\ub}{\textbf{u}}
\newcommand{\mb}{\textbf{m}}
\newcommand{\rhos}{\rho^\sharp}
\newcommand{\hs}{h^\sharp}
\newcommand{\sign}{\text{sign}}
\newcommand{\lar}{\langle A \rangle}
\newcommand{\ok}{\overline{k}}
\newcommand{\dl}{\left(y^2+\theta^2\right)}
\newcommand{\dlx}{\left(y^2+\theta_{lin}^2\right)}
\newcommand{\Op}{\textrm{Op}}
\newcommand{\Hw}{\mathcal{S}_{1,1}}
\numberwithin{equation}{section}
\title{Mixing solutions for the Muskat problem}
\author{A. Castro, D. C\'ordoba \& D. Faraco}
\newenvironment{proof}{\begin{trivlist} \item[] {\em Proof:}}{\hfill $\Box$
                       \end{trivlist}}
\renewcommand*\l@section{\@dottedtocline{1}{0em}{1.5em}}
\renewcommand*\l@subsection{\@dottedtocline{2}{1.5em}{2.3em}}
\renewcommand*\l@subsubsection{\@dottedtocline{3}{3.8em}{3.7em}}
\begin{document}

\maketitle

\begin{abstract}
We prove the existence of mixing solutions of the incompressible porous media equation for all Muskat type $H^5$ initial data in the fully unstable regime.
The proof combines convex integration, contour dynamics and a basic calculus for non smooth semiclassical type pseudodifferential operators which is developed.
\end{abstract}

\tableofcontents

\section{Introduction and the main theorem}
The dynamics of an incompressible fluid in an homogeneous and isotropic porous media is modeled by the following system
\begin{align}
\pa_t\rho + \ub\cdot \nabla \rho = 0 &\quad \text{in $\Omega$}\label{mcl}\\
\nabla\cdot \ub = 0 & \quad\text{in $\Omega$}\label{ic}\\
\frac{\nu}{\kappa} \ub = -\nabla p -\rho\textbf{g}\label{dl}&\quad \text{in $\Omega$},
\end{align}
where $\rho$ is the density, $\ub$ is the incompressible velocity field, $p$ is the pressure, $\nu$ is the  viscosity, $\kappa$ is the permeability of the media and $\textbf{g}$ is the gravity. The first equation represents the mass conservation law, equation \eqref{ic} the incompressibility of the fluid and equation \eqref{dl} is Darcy's law, which relates the velocity of the fluid with the forces acting on it. In this paper we will consider $\Omega=\R^2$.  As usual, we will refer to the system \eqref{mcl}, \eqref{ic} and \eqref{dl} as the IPM system.

The Muskat problem deals with two incompressible and immiscible fluids in a porous media with different constant densities $\rho^+$ and $\rho^-$ and  different constant viscosities. In this work
we will focus on the case in which both fluids have the same viscosity. Then  one can obtain the following system of equations from IPM
\begin{align}
\nabla \cdot \ub =0 &\quad \text{in $\Omega^\pm(t)$}\label{uno}\\
\nabla^\perp \cdot \ub =0 &\quad \text{in $\Omega^\pm(t)$}\\
\frac{\nu}{\kappa}(\ub^+-\ub^-)\cdot \tb = -g(\rho^+-\rho^-)(0,1)\cdot \tb  & \quad \text{on $\Gamma(t)$}\\
(\ub^+-\ub^-)\cdot \nb = 0  &\quad \text{in $\Gamma(t)$}\\
\pa_t \textbf{X}(\xb,t)= \ub(\textbf{X}(\xb,t),t)&\quad \text{in $\Omega^+(0)$}\\
\Omega^+(t)=\textbf{X}(\Omega^+(0),t), &\label{seis}
\end{align}
where $\ub^\pm$ is the restriction of the velocity to the interface, $\Gamma(t)=  \pa\Omega^+(t) \cap \pa\Omega^-(t)$, between both fluids , $\nb$ is the normal unit vector to $\Gamma(t)$ pointing out of $\Omega^+$, $\tb$ is a unit tangential vector to $\Gamma(t)$, $\Omega^\pm$ is the domain occupied by the fluid with density $\rho^\pm$ and therefore $\Omega^-=\R^2\setminus \overline{\Omega^+}$. Without any loss of generality we will take from now on $g=\nu=\kappa=1$.

The same system of  equations governs an interface separating two
fluids trapped between two closely spaced parallel vertical plates (a "Helle Shaw cell"). See \cite{safman}.

We also assume that $\Omega^+(0)$ is open and  simple connected, that there exist a constant $C$ such that $\{\xb=(x_1,\,x_2)\in\R^2\,:\, x_2<C\}\subset \Omega^+(0)$ (the fluid with density $\rho^+$ is below) and that the interface $\Gamma(0)$ is asymptotically flat  at  infinity with $\lim_{x_1\to -\infty}x_2=\lim_{x_1\to \infty}x_2=0$ for $\xb\in \Gamma(0)$.  This type of initial data will be called {\it of Muskat type}.

In this situation one can find an equation for the interface between the two fluids. Indeed, if we take the parametrization
$$\Gamma(t)=\{\zb(s,t)=(z_1(s,t),\, z_2(s,t))\in \R^2\},$$
the curve $\zb(s,t)$ must satisfy from \eqref{uno},..., \eqref{seis} (see \cite{RTB} and \cite{dp})
\begin{align}\label{muskat}
\pa_t\zb(s,t)=\frac{\rho^+-\rho^-}{2\pi}P.V.\int_{-\infty}^\infty\frac{z_1(s,t)-z_1(s',t)}{|\zb(s,t)-\zb(s',t)|^2}(\pa_s \zb(s,t)-\pa_s \zb(s',t))ds',
\end{align}
where $P.V.$ denotes the principal value integral.  At the same time the solutions of the Muskat equation \eqref{muskat} provide weak solutions of the IPM system.

The behaviour of the equation \eqref{muskat} strongly depends on the order of the densities $\rho^+$ and $\rho^-$. The problem is locally well posed in Sobolev spaces, $H^3$ (see \cite{dp}), if the interface is a graph and $\rho^+>\rho^-$, i.e., in the stable regime (see also \cite{vicol} and \cite{granero} for improvements of the regularity). Otherwise, we are in the unstable regime and the problem is ill-posed in $H^4$. This is a consequence of the instant analyticity proved in \cite{RTB} in the stable case (see also \cite{dp} for ill-posedness in $H^3$ for an small initial data).

 This contrast between the stable and unstable case is easy to believe  since  $\textbf{F}(s,t)=\pa^{4}_s \zb(s,t)$ satisfies that
\begin{align*}
\pa_t \textbf{F}=-\sigma(s,t)\Lambda \textbf{F}+ a(s,t)\pa_s \textbf{F}+ \textbf{R}(s,t),
\end{align*}
where $\Lambda=(-\Delta)^\frac{1}{2}$ , $a(s,t)$ and $\textbf{R}$ are lower order terms and the Rayleigh-Taylor function $\sigma(s,t)$ reads
\begin{align*}
\sigma(s,t)=(\rho^+-\rho^-)\frac{\pa_s z_1(s,t)}{|\pa_s \zb(s,t)|^2}.
\end{align*}
A quick analogy with the heat equation indicates that for  $\sigma(s,t)$  positive everywhere  the problem is well-possed (we are in the stable case). If $\sigma(s,t)$ is negative
the equation resembles a  backwards heat equation in this region and therefore  instabilities arise.

However,
in the present paper, we show that  there exists  weak solutions to the IPM system starting with an initial data of Muskat type in the fully unstable regime, i.e., $\rho^+<\rho^-$ and $\pa_s z_1(s,0)>0$ everywhere. The initial interface will have Sobolev regularity and in addition these solutions will have the following structure: there will exist domains $\Omega^{\pm}(t)$ where the density will be equal to $\rho^\pm$ and a  mixing domain $\Omega_{mix}(t)$ such that for every space-time ball contained in the mixing area
the density will take both  values $\rho^+$ and $\rho^-$.  We will call these solutions mixing solutions (see the forthcoming definition~\ref{mixing}). In figures \ref{figuresin} and \ref{mixingsolution} we present the main features of this kind of solutions.

\begin{figure}[ht]\label{figuresin}
\centering
\includegraphics[width=10cm,height=8cm]{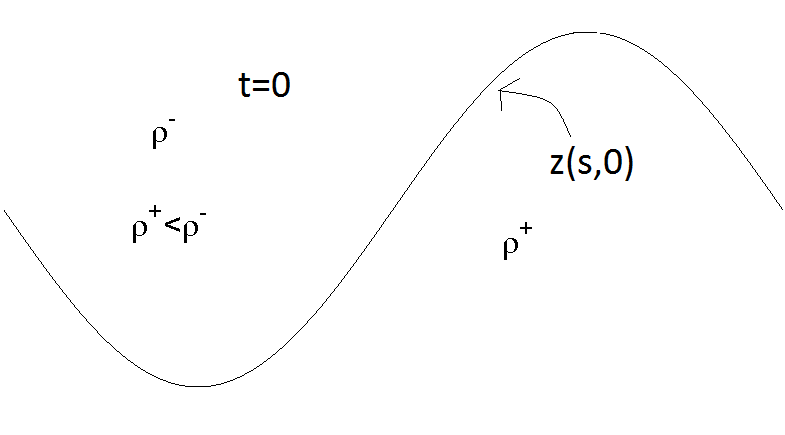}
\caption{A Muskat type initial data in fully unstable regime.}
\end{figure}

\begin{figure}[ht]
\centering
\includegraphics[width=10cm,height=8cm]{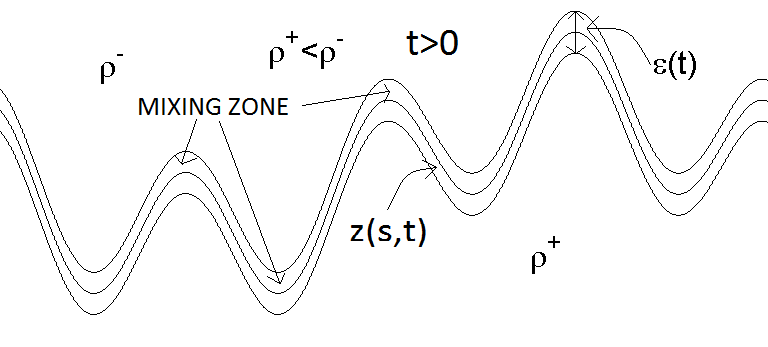}
\caption{A mixing solution a time $t>0$ starting in the configuration of fig. \ref{figuresin}.}
\label{mixingsolution}
\end{figure}

\begin{thm}\label{thmmain}Let $\Gamma(0)=\{ (x,f_0(x))\in \R^2\}$ with $f_0\in H^5$. Let us suppose that $\rho^+<\rho^-$. Then there exist infinitely many "mixing solutions" starting with the inital data of Muskat type given by $\Gamma(0)$ (in the fully unstable regime) for the IPM system.
\end{thm}
\begin{rem}
The existence
of such mixing solutions was predicted   by Otto in \cite{Otto}.  In this pioneering paper, Otto  discretizes the problem and present a relaxation in the context of Wasserstein metric,
which yields the existence of a "relaxed" solution in the case of a flat initial interface. It is a very interesting question whether it is possible to extend this approach to cover Theorem~\ref{thmmain}.
We would like to emphasize that the initial interface has Sobolev regularity, thus the Muskat problem is ill-possed in the Hadamard sense (see for example \cite{dp}). Therefore the creation of a mixing zone provides a mechanism to solve the IPM system in a situation where solutions of Muskat are not known.
\end{rem}

\begin{rem}Notice that these "mixing solutions" do not change the values that the density initially takes and that in any space-time ball $B \subset \Omega_{mix}(t)\times (0,T)$, $\rho$ takes both values, i.e there is total mixing. In fact,  a more refined version of convex integration recently  presented in the recent manuscript \cite{CastroFaracoMengual}, it is proved that there is mixing in space balls.
\end{rem}

The method of the proof is based on the adaptation of the method of convex integration
for the incompressible Euler equation in the Tartar framework developed recently by De Lellis and Sz\'ekelyhidi
(see \cite{tres}, \cite{cuatro}, \cite{cinco}, \cite{seis}, \cite{siete}, \cite{ocho}, \cite{nueve}, \cite{diez}, \cite{dieciseis} and \cite{vortex}  for the incompressible Euler and  for another equations \cite{doschi}, \cite{treschi}, \cite{cuatrochi}, \cite{seischi} and \cite{kim}).

Very briefly,  the version of convex integration used initially by De Lellis and Sz\'ekelyhidi  understands a nonlinear PDE,
$F(\rho,u)=0 $ as a combination of a linear system $L(\rho,u,q)=0$ and a pointwise constraint $(\rho,u,q) \in K$ where $K$ is a convenient set of states and $q$ is an artificial new variable. Then $L$ gives rises to a wave cone $\Lambda$ and the geometry of the
$\Lambda$ hull of $K$, $K^\Lambda$, rules whether the convex integration method will yield
solutions. An h-principle holds in this context: if for a given initial data there exists an evolution which belongs to $K^\Lambda$, called a subsolution, then one finds infinitely many weak  solutions.

 For the case of the IPM system, in \cite{CFG}, the authors initiated this analysis and used a version of the convex integration method which avoids the computation of $\Lambda$ hulls based on T4 configurations, key in other applications  of convex integration, e.g.  to the (lack of) regularity of elliptic systems \cite{MullerSverakAnnals, KirchheimMullerSverak,Kirchheimthesis}. Keeping the discussion imprecise, their criteria amounts to say that $(0,0)$ must be in the convex hull of $\Lambda \cap K$ in  a stable way.  Shvydkoy extended this approach  to a general family of active scalars, where the velocity is an even singular integral operator, in  \cite{roman}.  Recently, in \cite{VI}, Isett and Vicol using more subtle versions of convex integration  show the existence of weak solution for IPM with $C^\alpha-$regularity.  All of these  solutions, change the range of the modulus of the density. We remark that the solutions in theorem~\ref{thmmain} do not change the values of the density.

  Sz\'ekelyhidi refined the result of \cite{CFG} in \cite{laslo} computing explicitly the $\Lambda$-hull for the case of IPM. Notice that this increases the number of subsolutions (and thus the solutions available). In fact, Sz\'ekelyhidi showed that for the case of a flat interface in the unstable regime there exists a subsolution and thus  proved theorem \ref{thmmain} in this case.

\vspace{0.5cm}

The main contribution of this work is  a new way  to construct  such subsolutions, inspired by previous studies in contour dynamics, which  we believe of interest in related problems. Let us describe it briefly. The mixing zone (that is where the subsolution is not a solution) will be
a neighborhood of size $2\ep(x,t)$ of a suitable curve $(x,f(x,t))$ evolving in time according to a suitable  evolution equation. We call this curve
the {\it pseudointerface}.

Namely, if $\xb(x,\lambda)=(x,\lambda+f(x,t))$ we will
declare the mixing zone $\Omega_{mix}$ to be
$$\Omega_{mix}=\{ \xb\in \R^2 \,:\, \xb=\xb(x,\lambda)\, \quad\text{for}\quad (x,\lambda)\in (-\infty,\infty)\times (-\ep(x,t),\ep(x,t))\}.$$
Inside the mixing zone, the density of our subsolution will be simply  $\rho=\frac{\lambda}{\ep(x,t)}$.

Notice that the width of the mixing zone is variable, and it will grow linearly in time as $\ep(x,t)=c(x,t) t$, where $c(x,t)$, $1\leq  c(x,t) <2$,  is essentially  an arbitrary smooth function (technical assumptions will be made in theorem \ref{existencialocal}).

The case of constant  $c(x,t)=c$ is technically easier but we have preferred to deal with the variable growth case as it is more useful for further application and it shows the flexibility of the method.

Let us observe, that at the boundary of the mixing zone, the subsolution must become a solution ($|\rho|=1$). Our choice of the  subsolution
 imposes  that $f(x,t)$ must satisfy the  following non linear and non local equation,
\begin{align}
\pa_t f(x,t)= & \mathcal{M}u(x,t)\nonumber\\
f(x,0)= & f_0(x),\label{zmedio}
\end{align}
where
\begin{align*}
\mathcal{M}u(x,t)=-\frac{1}{2}\int_{-1}^1\frac{1}{\pi}\int_{-\infty}^\infty \frac{1}{2}\int_{-1}^1\frac{(x-y)\left(\pa_xf(x)-\pa_yf(y)+\lambda\ep(x)-\lambda'\ep(y)\right)}{(x-y)^2+(\ep(x)\lambda-\ep(y)\lambda'+f(x)-f(y))^2}d\lambda' dy d\lambda.
\end{align*}
Here $\mathcal{M}u$ can be understood as a suitable double
average of the velocity in the Muskat case.

 It turns out that it is possible but rather difficult to
obtain uniform estimates on $t$ for the operator $\mathcal{M}u$ in order to obtain existence for this system.  The situation is reminiscent to that of the Muskat problem but it is different as, on one hand, the kernel is not so singular but, on the other hand, we need to obtain estimates which are independent of $\ep$ (notice that for $\ep=0$ the problem is ill-posed). The first  difficulty is to quasi-linearize the operator $\mathcal{M}u$. This quasi-linearization is inspired by that one for the classical Muskat equation \ref{muskat} (see for example \cite{dp}). However, even in the case of constant $\ep$, some new difficulties arise and to deal with them we need to use different tools e.g., pseudodifferential theory.
The presence of variable width $\ep(x,t)$ introduces additional technical complications. Since the proof is long and delicate but the result is believable we postpone the proof to the appendix \ref{ordenmasaltosection} and \ref{apendice4} where we have introduced ad hoc notation which should make the proofs nice to follow.

In turn,  the needed a priori estimate boils down to understanding the evolution of the following equation for $F(x,t)=\pa^5_x f(x,t)$
\begin{equation} \label{Evolution}\partial_t F(x,t)=\int K(x,y,t) \partial_x F(x-y,t)dy+a(x,t)  \partial_x F(y,t)+G(x,t),\end{equation}
for a suitable kernel $K:\R\times \mathbb{R}\times \R^+ \to \R$, where $G(x,t)$  is a lower order term and $a(x,t)$  are functions with a lower number of derivatives.  The important fact in equation \eqref{Evolution} is that the kernel $K$ is order zero at time $t=0$, and yields a $(-\Delta)^\frac{1}{2}-$term with the wrong sign.  However $K$ is of $(-1)$-order  for any $t>0$ and yields a bounded term but with a blowing up norm  $\sim\frac{1}{t}$.

At the beginning of section \ref{apriori} we explain with a toy model, where the $x-$dependence of $K$ is frozen, that this behaviour forces a loss of at least one derivative with respect to the initial data. Semiclassical analysis \cite{Zworski} studies how the behaviour of smooth symbols $p(x,\hbar \xi)$ is like that of Fourier multipliers up to factors of
$\hbar$, $\hbar$ standing for the Planck constant. Our symbols $p(x,t,t\xi)$ can be interpreted as semiclassical with the time playing the role of $\hbar$  but they are not smooth. Thus, in order to deal with the full system, we produce a basic calculus of semiclassical type of pseudodifferential operators with limited smoothness, e.g.,  composition of such symbols or a suitable G\aa rding inequality. The results are pretty general and perhaps of its own interest.

Once that we define such a pseudointerface and the corresponding mixing zone,  we can
find the corresponding density $\rho$ and velocity  $\ub$ and show that they belong
to the suitable $\Lambda$ hull for small time, yielding then a subsolution.  Given the subsolution,  convex integration applies to create infinitely many  weak solutions,  though an additional observation is needed to obtain the mixing property (see section \ref{Hprinciple}).

The method of the proof seems robust to prove existence of weak solutions in a number of free boundary problems in an unstable regime.  For further recents developments of this circle of ideas,  see e.g \cite{ACFpreprint}, \cite{Mengualpreprint}, \cite{MengualSzekelyhidipreprint}, \cite{NoisseteSzekelyhidipreprint}. As it was remarked by Otto and Sz\'ekelyhidi (\cite{Otto} and \cite{laslo})
the underlying subsolution seems to capture  relevant observed properties of the solution as it is the growing rate of the mixing zone, the fingering phenomena (see the numerics in \cite{Castro})  or the volume proportion of the mixing (This has been recently quantified in \cite{CastroFaracoMengual}).

 It seems to us that the creation of a mixing zone in the lines
of this work, might end up in to a canonical way of turning ill-posed problems into solvable ones, at the price of loosing uniqueness at least at the microscopic level  (this line of thought has been already expressed in \cite{Otto} and
\cite{laslo}).   We emphasize that subsolutions as such are also highly non unique (e.g. see  the recent \cite{FS}  for an elegant proof of existence of subsolutions with piecewise constant densities).  In the case of the flat interface the relaxation solution obtained by Otto can be characterized as the unique entropy solution \cite{Otto} of a  concrete scalar conservation law, the one who linearly interpolates between the heavier and lighter fluid  and as the subsolution who maximizes the  speed of growth of the  mixing zone (\cite{laslo}).  Perhaps the most challenging open question after this work to obtain such nicely agreeing selection criteria for subsolutions  in the case of an arbitrary interface.
At the end of the paper we add a remark showing that surprisingly the mixing solutions are also present  in the stable regime in the case of straight interfaces except in the horizontal case. Let us remark that this have been extended to not flat interfaces in \cite{FS}

The paper is organized as follows: In section~\ref{mixingsubsolutions} we introduce the rigorous definition of mixing solutions and subsolution. In section~\ref{Hprinciple} we explain how the convex integration theory allow us to obtain a mixing solution from a subsolution. Section~\ref{subsolutionsection} is divided in two parts. In the first part, subsection 4.1, we construct a subsolution for the IPM system assuming the existence of the pseudointerface, ie.  solution for the equation \eqref{zmedio}. In the second part, section 4.2, we will show the existence of solutions for the equation \eqref{zmedio}. As discussed before, the proof requires some pseudodifferential  estimates
for non smooth symbols which might be of its own interest so we have gathered them in section~\ref{Semiclassical}. First we present the results which are general and then
those more related to our specific symbols, though it would not be difficult to extrapolate general theorems from  the later, as in the case of G\aa rding inequality.

In section~\ref{stable} we show how to construct mixing solutions in the stable regime. Finally in the appendix we prove the quasilinearization estimates as well as compute the symbols and their estimates.
\subsection{Notation}\label{notation}
We close the introduction by fixing some notation as it varies quite a lot in the literature. When no confusion arises  will use $L^2,H^k$ to
denote $L^2(\mathbb{R}), H^k(\mathbb{R})$ and $S$ denotes the Schwarz class.
 Given a symbol $p(x,\xi)$  we define a pseudodifferential operator $\Op(p)$
by
\[ \Op(p)(f)(x)= \int e^{2\pi i x\xi} p(x,\xi) \hat{f}(\xi) d\xi, \]
for $f\in S$.

In the case that the symbol $p=p(\xi)$,  depends only on the frequency variable, i.e., $p$ is a Fourier multiplier, we denote the operator by $P$ (the capital letter).
We will use the following  notation  to estimate commutators, correlation of differential operators  and the skew symmetric part of an operator.

\[\begin{aligned}
&[\Op(p_1),\Op(p_2)]=\Op(p_1)\circ \Op(p_2)-\Op(p_2)\circ \Op(p_1), \\
& \mathfrak{C}(p_1,p_2)=\Op(p_1)\circ \Op(p_2)-\Op(p_1\cdot p_2),
\\ &\Op(p)^{skew}=\Op(p)-\Op(p)^T,\end{aligned}\]
where $\Op(p)^T$ is the adjoint respect to the standard $L^2$ product.  For smoothness of the symbols we use the norms
\[ \|p\|_{\alpha,\beta}=\sup_{x,\xi,\alpha'\le \alpha,\beta' \le \beta} |\partial_x^{\alpha'}\partial_\xi^{\beta'}p(x,\xi)|, \]
where the derivatives are taken in the distributional sense.
Finally we will say that $p(x,\xi) \in \mathcal{S}_{\alpha,\beta}$,  if
\[ \|p\|_{\alpha,\beta} <\infty. \]

The symbols $|||f|||$ and $\lar$ will denote some polynomial function evaluated in $||f||_{H^4}$
and $||A||_{H^3}$ and we recall that $$A=\pa_x f$$
along the paper. In particular  both $|||f|||$ and $\lar$ will depend on $c$ as well  but we will not make such dependence explicit
as it is harmless for the apriori estimates (for a $c(x,t)$ as in the statement of theorem \ref{existencialocal}).

\section{The concepts of mixing solution and subsolution}\label{mixingsubsolutions}

Following \cite{laslo} we rigorously define the concept of "mixing solution" in the statement of theorem~\ref{thmmain}.  We would like our solutions to mix in every ball of the domain and thus we incorporate this into the
definition. Firstly, since we are working in unbounded domains, we give a definition of weak solution in which we prescribed the behaviour of the density  at $\infty$. In the following     $\mathcal{R}_i$, with $i=1,2$ are   the Riesz transform and $\mathcal{BS}$ is the Biot-Savart convolution. Recall that for a smooth function $f$ these operators  admit the kernel representations,
\begin{align*}
\mathcal{R}_if(\xb)=\frac{1}{2\pi}P.V.\int_{\R^2}\frac{x_i-y_i}{|\xb-\yb|^3}f(\yb)d\yb, &&
\mathcal{BS}f(\xb)=\frac{1}{2\pi}\int_{\R^2}\frac{(\xb-\yb)^\perp}{|\xb-\yb|^2}f(\yb)d\yb.
\end{align*}

\begin{defi}\label{weaksolution} Let  $T>0$ and  $\rho_0 \in  L^\infty(\R^2).$
The density $\rho(\xb,t) \in L^\infty(\R^2\times [0,T])$ and the velocity $\ub(\xb,t)  \in L^\infty(\R^2\times [0,T])$ are a weak solution of the IPM system with initial data $\rho_0$ and  if and only if  the weak equation
\begin{align*}
\int_{0}^T\int_{\R^2} \rho\left(\pa_t \varphi +\ub\cdot\nabla\varphi\right)dxdt=\int_{\R^2} \varphi(\xb,0)\rho_0(\xb)dx
\end{align*}
holds for all $\varphi \in C_c^\infty([0,T)\times\R^2)$,
and
\begin{align}\label{biotsavartmodi}
\ub(\xb)= \mathcal{BS}(-\pa_{x_1}\rho).
\end{align}
 \end{defi}

Notice that we have interpreted  the incompressibility of the velocity field and Darcy's law  with \eqref{biotsavartmodi}. In fact, for  $\rho\in C^\infty_c(\R^2), $ the equations
\begin{equation}\label{divcurl}
\begin{aligned}
\nabla\cdot \ub=&0\\
\nabla^\perp \cdot \ub= & -\pa_{x_1}\rho.
\end{aligned}
\end{equation}
together with the condition that  $\ub$ vanishes at infinity (the boundary condition) are equivalent to
\begin{align*}
\ub(\xb)=\mathcal{BS}(-\pa_{x_1}\rho)= \left(\mathcal{R}_2\mathcal{R}_1\rho,-\mathcal{R}_1\mathcal{R}_1\rho\right).
\end{align*}
 Thus, they are  consistent with  definition \ref{weaksolution}.
Definition \ref{weaksolution} extends the concept of solution of the system \eqref{divcurl} plus vanishing boundary condition for densities which do not necessarily vanish at infinity.
Notice that incompressibility and Darcy's law are automatically satisfied by our solution in the weak sense. That is,
\begin{align*}
\int_{\R^2}\ub\cdot \nabla \varphi d\xb =&0\\
\int_{\R^2}\ub\cdot \nabla^\perp \varphi d\xb =& -\int_{\R^2} \rho \pa_{x_1}\varphi d\xb,
\end{align*}
for all $\varphi\in C^\infty_c(\R^2)$.
\begin{defi}\label{mixing}
The density $\rho(\xb,t)$ and the velocity $\ub(\xb,t)$ are a "mixing solution" of the IPM system if they are a weak solution and also there exist, for every $t\in [0,T]$, open simply connected  domains $\Omega^{\pm}(t)$ and $\Omega_{mix}(t)$ with
$\overline{\Omega^+}\cup\overline{\Omega^-}\cup\Omega_{mix}=\R^2$  such that, for almost every $(x,t)\in \R^2\times [0,T]$, the following holds:

 \begin{align*}
\rho(x,t)=\left\{\begin{array}{cc}\rho^{\pm} & \text{in $\Omega^\pm(t)$}\\
 (\rho-\rho^+)(\rho-\rho^-)=0& \text{in $\Omega_{mix}(t)$}\end{array}\right..
\end{align*}

For every  $r>0, x \in \R^2, 0<t<T$ $B((x,t),\,r) \subset \cup_{0<t<T} \Omega_{mix}(t)$ it holds that
\[ \int_B (\rho-\rho^+) \int_B(\rho-\rho^-) \neq 0. \]
\end{defi}

For sake of simplicity and without any loss of generality we will fix the values of the density to be
\begin{align}\rho^{\pm}=\mp 1.\label{densi}\end{align}
The concept of subsolution is rooted in the Tartar framework understanding a non linear PDE as a linear PDE plus a non linear constraint.  In our context the linear constraint is given by
\[ K=\{(\rho,\ub,\mb) \in \R \times \R^2 \times \R^2:  \mb=\rho \ub, |\rho|=1 \}.\]
As observed by Sz\'ekelyhidi the set $K$ contains unbounded velocities which is slightly
unpleasant. Thus for a given $M>1$ we define
\[ K_M=\{(\rho,\ub,\mb) \in \R \times \R^2 \times \R^2:  \mb=\rho \ub, |\rho|=1, |\ub|\le M \}.\]

 Subsolutions arise as a relaxation of the nonlinear constraint. In the framework of the IPM system the relaxation is given by the mixing hull, the $\Lambda$ lamination hull for the associated wave cone  $\Lambda$ (see \cite{CFG,laslo} for a description of $\Lambda$). In \cite{laslo}, the author computed the laminations hulls of $K$ and $K_M$. We take them as definitions.

\begin{defi} We defined the mixing hulls for IPM by

\begin{align}\label{convex1}
K^{\Lambda}=&\left\{ (\rho,\ub,\mb) \in \R \times \R^2 \times \R^2\, :\,
 \left|\mb-\rho\ub+\frac{1}{2}\left(0,1-\rho^2\right)\right|  <  \left(\frac{1}{2}\left(1-\rho^2\right)\right)  \right\}.
\end{align}

For a given $M>1$, the $M$-mixing hull $K_M^{\Lambda}$ are the elements in
$K^\Lambda$ which additionally satisfy that

\begin{align}
&\left|2\ub+(0,\rho)\right|^2<  M^2-(1-\rho^2)\label{convex2}\\
& \left|\mb-\ub-\frac{1}{2}(0,1-\rho)\right|< \frac{M}{2} (1-\rho)\label{convex3}\\
&\left|\mb+\ub+\frac{1}{2}(0,1+\rho)\right|<  \frac{M}{2}(1+\rho) \label{convex4}.
\end{align}
\end{defi}

\begin{rem} Let us clarify the differences between our notation and the notation in \cite{laslo}. We are using same notation as in \cite{laslo} in section 4, but with $v$  there replaced by $\ub$  here. The concept of M-subsolution arises in section 2, proposition 2.5 in \cite{laslo}. To translate this proposition to our language one has to replace $u$ there by $2\ub+(0,\rho)$ and $m$ there by $\mb+\frac{1}{2}(0,1)$ (notice that in \cite{laslo} $m$, in section 2, pass to $m+\frac{1}{2}(0,1)$ in section 4).
\end{rem}

\begin{defi}\label{subsolution} Let $M>1$ and $T>0$.
We will say that $(\rho,\ub,\mb)\in L^\infty(\R^2\times [0,T])\times L^\infty(\R^2\times [0,T])\times L^\infty(\R^2\times [0,T])$,  is a M-subsolution of the IPM system if there exist open simply connected  domains $\Omega^{\pm}(t)$ and $\Omega_{mix}(t)$ with
$\overline{\Omega^+}\cup\overline{\Omega^-}\cup\Omega_{mix}=\R^2$ and such that the following holds:
\begin{itemize}
\item [(No mixing)]The density satisfies
\begin{align*}
\rho(\xb,t)=\mp 1\quad \text{in $\Omega^\pm(t)$}.
\end{align*}

\item [(linear constraint)]In $\R^2\times [0,T]$ $(\rho,\ub,\mb)$ satisfy the equations
\begin{align}
\pa_t\rho+ \nabla \cdot \mb =&0\nonumber \\
\rho(x,0)= & \rho_0 \nonumber \\ \label{biotsavart}
\ub(\xb)=&\mathcal{BS}(-\pa_{x_1}\rho)\equiv\frac{1}{2\pi}\int_{\R^2}\frac{(\xb-\yb)^\perp}{|\xb-\yb|^2}(-\pa_{y_1}\rho(\yb))d\yb,
\end{align}
in a weak sense.
\item [(Relaxation)]   $(\rho,\ub,\mb) \in K^{\Lambda}_M$  in $\Omega_{mix}(t)\times (0,T)$ and $(\rho,\ub,\mb) \in \overline{K}^{\Lambda}_M$  in $\R^2\times (0,T)$.
\item [(Continuity)] $(\rho,\ub,\mb)$ is continuous in $\Omega_{mix}(t)\times (0,T)$.
 \end{itemize}

\end{defi}

\begin{rem}Along the text we will typically speak about subsolution (rather than M-subsolution) and we only  make explicit the constant $M$ when it is needed.
\end{rem}

\section{H-principle: Subsolutions yield weak solutions}\label{Hprinciple}
In this section we follow  \cite{laslo} to find that to prove theorem \ref{thmmain} is enough to show the existence of a M-subsolution, for some $M>1$, $(\overline{\rho},\overline{\ub}, \overline{\mb})$.  Since $L^\infty(\mathbb{R}^2) \subset L^2(d\mu)$
with $d\mu = \frac{dx}{(1+|x|)^3}$,we will work with $L^2(d\tilde{\mu})$,  where $d\tilde{\mu}=d\mu dt$ as the auxiliar space.

 Associated to a M-subsolution $(\overline{\rho},\overline{\ub}, \overline{\mb})$ in $[0,T]$,
we define a set $X_0$.
\begin{align*}
&X_0=\left\{ (\rho,\ub,\mb)\in L^\infty(\R^2\times [0,T]\times L^\infty(\R^2\times[0,T]) \times L^\infty (\R^2\times [0,T]) \quad :\right.\\
& (\rho,\ub,\mb)=(\overline{\rho},\overline{\ub}, \overline{\mb})\quad \text{a. e. in $\R^2\setminus \overline{\Omega}_{mix}$},\\
&\left. \text{and $(\rho,\ub,\mb)$ is a subsolution} \right\}.
\end{align*}

This set is not empty since $(\overline{\rho},\overline{\ub}, \overline{\mb}) \in X_0$.
\begin{lemma}Let $(\overline{\rho},\overline{\ub}, \overline{\mb})$ be a M-subsolution. Then the space $X_0$ is bounded in $L^2(d\tilde{\mu})$.
\end{lemma}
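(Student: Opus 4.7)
The plan is to extract a uniform $L^{\infty}$ bound on any $(\rho,\ub,\mb)\in X_0$ from the pointwise relaxation constraint $(\rho,\ub,\mb)\in \overline{K}^{\Lambda}_M$ and then use that $d\tilde\mu$ is a finite measure to pass from $L^{\infty}$ to $L^{2}(d\tilde\mu)$.

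First I would read off the $L^{\infty}$ bounds directly from the definition of $\overline{K}^{\Lambda}_{M}$. The closed version of inequality \eqref{convex1} forces $1-\rho^{2}\ge 0$, hence $|\rho|\le 1$ everywhere on $\R^{2}\times[0,T]$. The closed version of \eqref{convex2} then yields
\begin{equation*}
|2\ub+(0,\rho)|^{2}\le M^{2}-(1-\rho^{2})\le M^{2},
\end{equation*}
so $|\ub|\le \tfrac{1}{2}(M+|\rho|)\le \tfrac{1}{2}(M+1)$. Finally, \eqref{convex3} (or \eqref{convex4}) combined with the bounds on $\ub$ and $\rho$ gives
\begin{equation*}
|\mb|\le |\ub|+\tfrac{1}{2}|1-\rho|+\tfrac{M}{2}(1-\rho)\le C(M).
\end{equation*}
Since these inequalities hold in $\Omega_{mix}(t)$ and, by the definition of $X_0$, $(\rho,\ub,\mb)$ coincides with the fixed subsolution $(\overline{\rho},\overline{\ub},\overline{\mb})\in L^{\infty}$ on the complement, we obtain a uniform bound
\begin{equation*}
\|(\rho,\ub,\mb)\|_{L^{\infty}(\R^{2}\times[0,T])}\le C(M,\overline{\rho},\overline{\ub},\overline{\mb})
\end{equation*}
that does not depend on the particular element of $X_0$ chosen.

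To conclude, I would observe that the measure $d\tilde\mu=\frac{dx\,dt}{(1+|x|)^{3}}$ satisfies
\begin{equation*}
\tilde\mu(\R^{2}\times[0,T])=T\int_{\R^{2}}\frac{dx}{(1+|x|)^{3}}<\infty,
\end{equation*}
so for any $f\in L^{\infty}(\R^{2}\times[0,T])$ one has $\|f\|_{L^{2}(d\tilde\mu)}\le \tilde\mu(\R^{2}\times[0,T])^{1/2}\|f\|_{L^{\infty}}$. Applying this to each component of $(\rho,\ub,\mb)$ gives the desired uniform $L^{2}(d\tilde\mu)$ bound on $X_0$.

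There is no real obstacle here: the content is essentially that $\overline{K}^{\Lambda}_{M}$ is a bounded subset of $\R\times\R^{2}\times\R^{2}$ (its defining inequalities cut out a bounded region), combined with the finiteness of the weighted measure $d\tilde\mu$ on $\R^{2}\times[0,T]$. The only mild care is to use the \emph{closed} hull $\overline{K}^{\Lambda}_M$ (as required in the relaxation clause of Definition~\ref{subsolution}), so that the pointwise bounds above hold almost everywhere on $\R^{2}\times(0,T)$ rather than just on $\Omega_{mix}(t)$.
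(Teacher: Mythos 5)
Your proof is correct and follows essentially the same route as the paper: uniform pointwise bounds on $(\rho,\ub,\mb)$ extracted from membership in $\overline{K}^{\Lambda}_M$, combined with the finiteness of $\tilde{\mu}$ on $\R^2\times[0,T]$. The only cosmetic difference is that you bound $\mb$ via \eqref{convex3}, whereas the paper uses the triangle inequality together with \eqref{convex1}; both are immediate consequences of the hull constraints and yield the same conclusion.
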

\begin{proof} Let $(\rho,\ub,\mb)\in X_0$. Then $||\rho||_{L^\infty}\le1$ and $||u||_{L^\infty}\leq C(M)$, so that for a fixed time  $||\rho||_{L^2(d\mu)}$, $||u||_{L^2(d\mu)}\leq C(M)$.
 Similarly
 \begin{align*}
 ||\mb||_{L^2(d\mu)}\leq & \left|\left|\mb-\rho\ub+\frac{1}{2}\left(0,1-\rho^2\right)\right|\right|_{L^2(d\mu)}+
 \left|\left|\rho\ub-\frac{1}{2}\left(0,1-\rho^2\right)\right|\right|_{L^2(d\mu)}.
 \end{align*}
 Thus $||\mb||_{L^2(d\mu)}$ is bounded thanks to \eqref{convex1} and to  $||\rho||_{L^2(d\mu)}$, $||\ub||_{L^2(d\mu)}\leq C(M)$.  The claim follows by integrating respect to time in
 $[0,T]$.

\end{proof}
 Since $X_0$ is bounded in $L^2(d\tilde{\mu})$ and the weak topology of this space is metrizable, we can consider the space $X$ given by  closure of $X_0$ under this metric.

 We will prove the following theorem,

\begin{thm} \label{mixing residual}If $X_0$ is not empty the set of mixing  solutions of
 IPM with $\rho_0$ as initial data is residual in $X$. Here $\rho_0$ is the subsolution at time $t=0$.\end{thm}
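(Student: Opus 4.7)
The plan is to deploy the Baire category / h-principle machinery of De Lellis--Sz\'ekelyhidi, adapted to this mixing-zone setting. First I would endow $X$ with the structure of a compact metric space. Since $X_0$ is uniformly $L^\infty$-bounded by the preceding lemma and the weak-$L^2(d\tilde{\mu})$ topology is metrizable on $L^2$-bounded sets, $X$ with that topology is a compact, hence Baire, metric space. All affine parts of the definition of subsolution---the linear conservation law $\pa_t\rho + \nabla\cdot\mb=0$, the relation $\ub=\mathcal{BS}(-\pa_{x_1}\rho)$, the prescribed values outside $\Omega_{mix}$, and membership in the closed relaxation $\overline{K}^\Lambda_M$---are preserved under weak $L^2$ limits. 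What is lost in the limit is only the \emph{open} strict inequalities \eqref{convex1}--\eqref{convex4}.

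To quantify failure to be a solution of IPM, introduce the functional
\[
I(\rho,\ub,\mb) := \int_0^T\!\!\int_{\Omega_{mix}(t)} \bigl(1-\rho^2\bigr)\, d\mu\, dt.
\]
Because $\rho\mapsto 1-\rho^2$ is concave, $I$ is weakly upper semicontinuous on $X$, hence Baire-$1$, and its set $\mathcal{C}$ of continuity points is residual. If every element of $\mathcal{C}$ satisfies $I=0$ then $\rho\in\{-1,+1\}$ a.e.\ on the mixing zone, the inequality in \eqref{convex1} forces the pointwise identity $\mb=\rho\ub$ a.e., and combined with the preserved linear constraint this exhibits $(\rho,\ub)$ as a weak solution of IPM in the sense of definition \ref{weaksolution}.

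The technical heart is a perturbation lemma: for every $(\overline{\rho},\overline{\ub},\overline{\mb})\in X_0$ with $I(\overline{\rho},\overline{\ub},\overline{\mb}) > 0$ there exists a sequence in $X_0$ converging weakly in $L^2(d\tilde{\mu})$ to $(\overline{\rho},\overline{\ub},\overline{\mb})$ whose $\rho$-component has $L^2(d\tilde\mu)$-squared norm bounded below by $\|\overline{\rho}\|_{L^2(d\tilde\mu)}^2 + \eta$, with $\eta = \eta(I(\overline{\rho},\overline{\ub},\overline{\mb})) > 0$. The construction is the standard $\Lambda$-segment/plane-wave perturbation: on small space-time cubes on which the subsolution lies strictly inside $K^\Lambda_M$, one adds a compactly supported highly oscillatory triple along a $\Lambda$-direction chosen using Sz\'ekelyhidi's explicit description of $K^\Lambda_M$ in \cite{laslo}, and sums such perturbations over a partition. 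The resulting sequence witnesses a discontinuity of $I$: if $(\overline{\rho},\overline{\ub},\overline{\mb})$ were a continuity point of $I$ with $I>0$, then $I(\rho_n)\to I(\overline{\rho})$ while at the same time $I(\rho_n)\le I(\overline{\rho})-\eta+o(1)$, a contradiction. Hence $I\equiv 0$ on $\mathcal{C}$, giving a residual set of weak solutions.

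Finally I would upgrade to the total-mixing condition by enlarging the family of test functionals. Fix a countable basis $\{B_k\}$ of open space-time balls contained in $\bigcup_{0<t<T} \Omega_{mix}(t)$ and set $L_k(\rho):=\int_{B_k}\rho\, d\tilde{\mu}$, which is weakly continuous on $X$. On $\{I=0\}$, $\rho\in\{-1,+1\}$ a.e., so the product condition in definition \ref{mixing} is equivalent to $|L_k(\rho)|<|B_k|$, an open condition in the weak topology. A second application of the perturbation lemma inside $B_k$ (trading some $\{\rho=+1\}$-mass for $\{\rho=-1\}$-mass along a $\Lambda$-segment) shows that $\{|L_k|<|B_k|\}$ is dense in $X$, hence dense-open in $\mathcal{C}$; intersecting over $k$ yields a residual subset of $X$ of genuine mixing solutions. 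The main obstacle throughout is the perturbation lemma itself; its validity rests on the explicit geometry of $K^\Lambda_M$ worked out in \cite{laslo}, and that is the ingredient that makes the whole scheme go through here.
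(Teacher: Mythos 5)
Your overall scheme is the one the paper itself indicates: the perturbation lemma (Lemma~\ref{perturbation}) plus a Baire-category argument, and for total mixing a countable family of balls with the sets $\{\int_B(\rho\mp1)=0\}$ closed and disjoint from the dense set $X_0$. Two remarks on the route: the paper's main proof does not use the functional $I=\int(1-\rho^2)$ but shows that the identity map from $(X,d_{weak})$ to $L^2(d\tilde\mu)$ is Baire-one (via mollification, using $\mu(\R^2)<\infty$), so that at the residual set of continuity points approximating subsolutions converge \emph{strongly}; combined with the perturbation lemma this gives $(\rho,\ub,\mb)\in K_M$ a.e.\ directly, i.e.\ both $|\rho|=1$ and $\mb=\rho\ub$ at once. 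Your functional route is exactly the alternative sketched in the paper's remark (the $I_N$ argument of \cite{siete}), and your mixing step is essentially the paper's, except that density of $\{|L_k|<\tilde\mu(B_k)\}$ needs no second perturbation: $X_0$ is dense in $X$ by definition and every subsolution already has $|\rho|<1$ on the mixing zone.

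There is, however, one step that fails as you justify it. You assert that membership in $\overline{K}^{\Lambda}_M$ is an ``affine part'' of the definition and hence preserved under weak $L^2$ limits, and you then use the limiting inequality \eqref{convex1} with $\rho^2=1$ to force $\mb=\rho\ub$. But $K^{\Lambda}_M$ is \emph{not} convex: the states $(1,\ub_0,\ub_0)$ and $(-1,-\ub_0,\ub_0)$ lie in $\overline{K}^{\Lambda}_M$ (for moderate $|\ub_0|$), while their midpoint $(0,0,\ub_0)$ violates \eqref{convex1} whenever $\ub_0\neq0$. Pointwise constraints given by non-convex closed sets are not stable under weak convergence, so the limit element of $X$ need not satisfy \eqref{convex1}, and your deduction of $\mb=\rho\ub$ is unjustified as written; note that $|\rho|=1$ a.e.\ alone does not make an element of $X$ a weak solution. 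The gap is reparable in two ways: (i) follow the paper and use the Baire-one identity map, so that at continuity points one has strong (hence a.e.\ subsequential) convergence and the full closed constraint, indeed membership in $K_M$, passes to the limit; or (ii) keep your functional $I$ but replace \eqref{convex1} by the constraints \eqref{convex3}--\eqref{convex4}, whose closed forms are genuinely convex in $(\rho,\ub,\mb)$ and therefore weakly closed; when $\rho=1$ (resp.\ $\rho=-1$) a.e., \eqref{convex3} (resp.\ \eqref{convex4}) degenerates to $\mb=\ub=\rho\ub$ (resp.\ $\mb=-\ub=\rho\ub$), which closes the argument. With either repair the rest of your proposal goes through.
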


The general framework of convex integration applies  easily to our setting. For the sake of simplicity  we will follow  the Appendix from \cite{laslo} with an slight modification.
We consider the unbounded domain $\mathbb{R}^3$ ($\R^2$ in space and $\R$ in time), $z:\Omega \to \mathbb{R}^5$  and  a bounded set $K \subset  \mathbb{R}^5$ such that

\begin{equation}\label{LS}
\sum_{i=1}^d A_i \partial_i z=0,
\end{equation}

\begin{equation}
z \in K.
\end{equation}

Assumptions:

\begin{itemize}

\item [H1] The wave cone.  There exists a closed  cone $\Lambda \subset \mathbb{R}^5$ such that for every $\overline{z}  \in \Lambda$ and for every ball $B\in \R^3$ there exists a sequence $z_j \in C_c^\infty
(B,\mathbb{R}^5)$ such that
\begin{itemize}
\item [i)]$\textrm{dist}(z_j,[-\overline{z},\overline{z}])\to 0$ uniformly,
\item [ii)]$z_j \to 0$ weakly 0 in $L^2(d\tilde{\mu})$ weakly,
\item [iii)] $\int |z_j|^2 d\tilde{\mu}\ge \frac12 |\overline{z}|^2$.
\end{itemize}

\item [H2] The $\Lambda$ convex hull. There exist an open set $U$ with $U\cap K=\emptyset$ and a continuous convex and increasing nonnegative function $\phi$ with $\phi(0)=0$ that for every $z \in U$   $z+t\overline{z} \in U$ for $|t|\le \phi(dist(z,K))$
\item [H3] Subsolutions. There exists a set $X_0 \subset L^2(d\tilde{\mu})$ that  is a bounded subset of  $L^2(d\tilde{\mu})$ which is perturbable in a fixed subdomain $\mathcal{U} \subset \Omega$
such that any $z \in X_0$ that
satisfies $z(y) \in U$ and if $w_j \in C^\infty_c(\mathcal{U},\R^5)$ is the approximating sequence from $[H1]$ and
$z+w_j \in U$  then $z+w_j \in X_0$.
\end{itemize}
In the case of the IPM equation with the constraints $|\rho|=1,|u| \le M$ both $\Lambda$, $K_M$ and $K_M^\Lambda$  has been extensively studied in \cite{CFG,laslo}. We take $\mathcal{U}=\Omega_{mix}(t)\times (0,T)$. The property [H2] for $K_M^\Lambda$ was proved in \cite[Proposition 3.3]{laslo}. For the property [H1]  we use the sequence $z_j$ as constructed for example in \cite[Lemma 3.3]{CFG}.
Our  Property [H1i)] is stated in the first property stated in that lemma. For property [H1ii)] notice  that  we know  from \cite[Lemma 3.3]{CFG} that $z_j \to 0$  weakly  star topology of $L^\infty$. However, $z_j$  is uniformly bounded in $L^\infty$  and compactly supported and thus uniformly bounded in $L^2(d\tilde{\mu})$. Thus the weak star convergence  implies also weak star convergence in  $L^2(d\tilde{\mu})$. Our property  [H1iii)] requires some work as $\mu$ does not scale uniformly.
However as proved  for example in \cite[Lemma 3.3]{CFG}, in addition to the properties listed in \cite[H1]{laslo}  it holds  that for a $\Lambda$ segment $\overline{z}$ the approximating
sequence satisfies also that,

\[\lim_{j \to \infty} | (x,t) \in B:   |z_j(x,t)| \neq \pm \overline{z}| =0,\]
and by absolute continuity  it holds that
\[\lim_{j \to \infty} \mu\{ (x,t) \in B:   |z_j(x,t)| \neq \pm \overline{z}\} =0.\]

Thus by choosing $j$ large enough $iii)$ also holds.

We skip the proof of the following lemma as it is identical to \cite[Lemma 5.2]{laslo}

\begin{lemma}\label{perturbation}Let $z \in X_0$ with $\int_{\Omega_{mix}(t)\times[0,T]} F(z((x,t)))d\tilde{\mu}\ge \ep>0$. For all $\eta>0$ there exists $\tilde{z} \in X_0$ with $d_{X}(z,\tilde{z})\le \eta$
and
$$ \int_{\Omega_{mix}(t)\times[0,T]} |z-\tilde{z}|^2 d\tilde{\mu} \ge \delta. $$
Here $\delta=\delta(\ep)$.
\end{lemma}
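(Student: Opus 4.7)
The plan is to construct $\tilde{z}$ as the sum $z+w$ where $w=\sum_{k=1}^N w_k$ is a finite sum of localized perturbations, each produced by the approximating sequence of hypothesis [H1] on a small ball contained in $\Omega_{mix}(t)\times(0,T)$. The integral lower bound $\int F(z)\, d\tilde{\mu}\ge\epsilon$ guarantees a set of positive $\tilde{\mu}$-measure on which $z$ sits well inside $U$ and is quantitatively separated from $K$; the $\Lambda$-convexity provided by [H2] then gives room to insert $\Lambda$-segments of length bounded below by a constant depending only on $\epsilon$.

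First I would extract a measurable set $A\subset\Omega_{mix}(t)\times[0,T]$ of $\tilde{\mu}$-measure $\gtrsim\epsilon$ on which $\mathrm{dist}(z(x,t),K)\ge c(\epsilon)>0$; this is the Chebyshev-type consequence of $\int F(z)\,d\tilde{\mu}\ge\epsilon$ together with the boundedness of $X_0$. By the continuity clause in Definition~\ref{subsolution}, $z$ is continuous on $\Omega_{mix}(t)\times(0,T)$, so a Vitali-type covering allows me to select finitely many pairwise disjoint open balls $B_k\subset A$ on which the oscillation of $z$ is so small that $\mathrm{dist}(z,K)\ge c(\epsilon)/2$ throughout $B_k$. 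Applying [H2] with the uniform lower bound $\phi(c(\epsilon)/2)$, I obtain a $\Lambda$-direction $\overline{z}_k$, of length bounded below by $\phi(c(\epsilon)/2)$, such that on $B_k$ the segment $z(x,t)+[-\overline{z}_k,\overline{z}_k]$ remains inside $U$.

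Next, for each $B_k$ I invoke [H1] to produce a sequence $w_{k,j}\in C_c^\infty(B_k,\mathbb{R}^5)$ with $\mathrm{dist}(w_{k,j},[-\overline{z}_k,\overline{z}_k])\to 0$ uniformly, $w_{k,j}\rightharpoonup 0$ weakly in $L^2(d\tilde{\mu})$, and $\int|w_{k,j}|^2 d\tilde{\mu}\ge\tfrac12|\overline{z}_k|^2\tilde{\mu}(B_k)$ (the lower bound is exactly the form noted in the paper as following from the uniform $L^\infty$ bound, the condition $\tilde{\mu}\{|w_{k,j}|\neq\pm\overline{z}_k\}\to 0$, and the absolute continuity of $\tilde{\mu}$ with respect to Lebesgue measure on bounded sets). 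For $j$ sufficiently large the uniform proximity to $[-\overline{z}_k,\overline{z}_k]$ guarantees $z+w_{k,j}\in U$ throughout $B_k$; because the $B_k$ are disjoint, the same holds for $\tilde{z}=z+\sum_k w_{k,j}$, so [H3] yields $\tilde{z}\in X_0$.

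Finally, I would fix the index $j$ by balancing two competing requirements. Weak $L^2(d\tilde{\mu})$ convergence of each $w_{k,j}$ to zero, together with the metrizability of the weak topology on the bounded set $X_0$, allows me to pick $j$ so large that $d_X(z,\tilde{z})\le\eta$; simultaneously, the lower bound from [H1iii)] accumulated over the finitely many $B_k$ provides
\[
\int_{\Omega_{mix}(t)\times[0,T]}|\tilde{z}-z|^2\,d\tilde{\mu}\;\ge\;\tfrac12\sum_{k=1}^N|\overline{z}_k|^2\,\tilde{\mu}(B_k)\;\ge\;\delta(\epsilon),
\]
where $\delta(\epsilon)$ depends only on $c(\epsilon)$, $\phi$ and the total measure of $A$. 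The main technical nuisance is the non-homogeneous scaling of $d\tilde{\mu}$, which prevents a naive rescaling argument; this is precisely where the stronger form of [H1iii)] noted by the authors (via $\tilde{\mu}\{|z_j|\neq\pm\overline{z}\}\to 0$ and absolute continuity) is used, as opposed to the Lebesgue version of \cite{laslo}. With this ingredient in hand, the remainder of the proof is a verbatim transcription of the argument in \cite[Lemma~5.2]{laslo}.
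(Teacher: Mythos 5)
Your argument is essentially sound and follows the same convex-integration skeleton as the proof the paper refers to (namely \cite[Lemma 5.2]{laslo}, which the commented source reproduces with the Lebesgue measure replaced by $\tilde{\mu}$): disjoint balls inside $\Omega_{mix}(t)\times(0,T)$, localized [H1] perturbations close to a $\Lambda$-segment whose length is controlled through [H2], [H3] to stay in $X_0$, weak convergence to make $d_X(z,\tilde z)\le\eta$, and the strengthened form of [H1iii)] (via $\tilde\mu\{|z_j|\neq\pm\overline z\}\to 0$ and absolute continuity) to handle the non-homogeneous measure — you correctly identify this last point as the only genuine adaptation needed. Where you deviate is in how $\delta(\epsilon)$ is extracted from $\int F(z)\,d\tilde\mu\ge\epsilon$: you use a Chebyshev argument (possible because $F$ is bounded on the $L^\infty$-bounded set $X_0$ and $\tilde\mu(\Omega_{mix}(t)\times[0,T])<\infty$) to get a set of measure $\gtrsim\epsilon$ where $\mathrm{dist}(z,K)\ge c(\epsilon)$, and then a uniform segment length $\phi(c(\epsilon)/2)$ on each ball; the paper's argument instead keeps the pointwise values $F(z(y_i))$ and uses the convexity of $\phi$ (a Jensen-type step) to produce the explicit bound $\delta\sim\tilde\mu(\mathcal U)\,\phi^2\bigl(c\,\epsilon/\tilde\mu(\mathcal U)\bigr)$. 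Your route is slightly more elementary and does not use convexity of $\phi$ at all, at the price of a less explicit $\delta$. Two small inaccuracies to fix in a write-up: the set $A=\{\mathrm{dist}(z,K)\ge c(\epsilon)\}$ may have empty interior, so you cannot take balls $B_k\subset A$; take instead balls centered at points of $A$, small enough (by continuity of $z$ on $\Omega_{mix}(t)\times(0,T)$) that $\mathrm{dist}(z,K)\ge c(\epsilon)/2$ on $B_k$, and extract a finite disjoint subfamily carrying a fixed fraction of $\tilde\mu(A)$ by a Vitali/Besicovitch argument — which is what your oscillation remark already suggests. Likewise, [H2] gives the admissible segment at the center of the ball only, so one should shorten it (the factor $\tfrac34$ in the paper's argument) and use openness of $U$ together with the small oscillation to guarantee $z(y)+w_{k,j}(y)\in U$ for all $y\in B_k$; with these standard adjustments your proof goes through.
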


{\it Proof of theorem~\ref{mixing residual}.}

Firstly, as in the  proof of  \cite [theorem 5.1]{laslo} lemma~\ref{perturbation} implies that the set of bounded solutions to IPM is residual in $X$.
The proof works in the same way since due to the fact that $\mu(\mathbb{R}^2)<\infty$,  convolutions with a standard mollification kernel are continuous from
$L^2(d\tilde{\mu} ,w)$ to $L^2(d\tilde{\mu} )$ and thus the Identity is a Baire one map, with  a residual set of points of continuity.  That is  the set of $(\rho,\ub,\mb) \in X$ which belong to
$K_M$ a.e. $(x,t) \in \mathbb{R}^2 \times [0,T]$ is residual in $X$. This is precisely the set
of weak solutions to IPM with the Muskat initial data.

 It remains  to show the mixing property:

Choose  $B((x,t),r) \subset \cup_{0<t<T} \Omega_{\text{mix}}(t)$. Declare
\[X_{B,\pm 1}=\{ (\rho, u, m) \in X: \int_B (\pm 1-\rho)=0\}. \]
Then  $X_{B,\pm 1} \subset X$ is closed by the definition of weak convergence and since $X_{B,\pm 1}\cap X_0 =\emptyset$ (for states in $X_0$, $|\rho|<1$)  and $X_{B,\pm 1} \subset \overline{X_0}$. Thus, $X_{B,\pm 1}$  has empty interior. Therefore  $X\setminus  X_{B,\pm 1}$ is residual.
Since intersection of residual sets is residual, it follows that
\[ \{ X\setminus \cup_i X_{B_i,\pm 1}: B_i=B(x_i,t_i,r_i) \subset \cup_{0<t<T} \Omega(t),
x_i \in \mathbb{Q}^2,t_i \in \mathbb{Q},r_i \in Q\}\]
with $\mathbb{Q}$ the rationals is residual. By density of rationals  elements in  $X\setminus \cup_i X_{B_i,\pm 1}$ satisfy the mixing property and thus the set of mixing solutions is residual in $X$ with respect to the weak topology.

\hfill $\Box$

\begin{rem} We introduce the measure $\mu$ to deal with the unboundedness of the domain.  However we could have followed instead \cite{siete} and consider for capital $N \nearrow
\infty$  $I_N: X \mapsto \mathbb{R}$ defined by
$I_N: \int_{B(0,N)\times[0,T]} (|\rho|^2-1) dxdt$. By convexity of the $L^2$ norm it follows that  $I_N$ is  lower semicontinuous respect to the weak star topology of $L^\infty(X)$. Thus it is a Baire one map with a residual set of points of continuity.  By our lemma~\ref{perturbation}
if $z$ is  a point of continuity  of $I_N$ in $X$ $I_N(z)=0$.
 Since elements of $X$ such that  $\rho(x,t)=1$ correspond to weak solutions to IPM and intersection of residual sets is residual the theorem follows.
\end{rem}

\begin{rem} The proof presented above only yields weak solutions to the IPM system such that $|\rho(x,t)|=1$ for a.e. $t \in [0,T]$. However (see the proof of \cite[Lemma 3.3]{CFG})  for every $\overline{z}=(\overline{\rho},\overline{u},\overline{m}) \in \Lambda$ with $\overline{\rho} \neq 0$ there exists
$(\xi , \xi_t) \in \mathbb{R}_x^2\times \mathbb{R}_t, \xi \neq 0$ such that
\[\begin{aligned}
&D^2(h( (\xi,\xi_t) \cdot (x,t)))= h''( (\xi,\xi_t) \cdot (x,t))\left(\begin{array}{cc} \overline{\rho} -\overline{u_2} & \overline{u_1} \\  \overline{u_1} & \overline{\rho}+\overline{u_2} \end{array}\right)
\\ & \partial_t \nabla h( (\xi,\xi_t) \cdot (x,t))+\nabla^{\perp}h'( (\xi,\xi_t) \cdot (x,t))= h''( (\xi,\xi_t)\cdot (x,t))\overline{m}. \end{aligned} \]
This is the analogous of \cite[Proposition 4]{siete} .
Thus one imitates the proof in \cite[Proposition 2]{siete}  and obtain weak solutions to the IPM systems such that

\[ |\rho(x,t)|=1 \]
for every $t$.  We skip the details since there is no essential difference. Also following \cite{CastroFaracoMengual} the mixing property can be proven at every time slice.

\end{rem}

{\it Proof of theorem~\ref{thmmain}.}

We start with a given initial data of Muskat type $f_0\in H^{5}$, with   $1\leq c(x,t)<2$ satisfying hypothesis of theorem \ref{existencialocal}.  By theorem~\ref{existencialocal}  there exists a time $T^*(f_0)>0$ and a function $f\in C([0,T^*(f_0)], H^4(\R))$, such that $(\ep(x,t)=c(x,t)t,f(x,t))$ solve the equation \eqref{zmedio}. By theorem \ref{sub} there exists a M-subsolution in $[0,T(f_0,M,c)]$, with $T(f_0,M,c)\leq T^*(f_0)$, and therefore we can define the space $X_0$ associated to this subsolution and apply theorem \ref{mixing residual}.

\hfill $\Box$

\section{Constructing a subsolution for the IPM system}\label{subsolutionsection}

     This section is  divided in two parts and its purpose is to show the existence of a subsolution. In the first part we will find a subsolution for the IPM system in the sense of definition \ref{subsolution}  assuming that there exist a solution for the equation \eqref{zmedio}.  We next state such existence theorem with the precise conditions on the
 speed of opening $c$.

\begin{thm}\label{existencialocal}Let  $f_0(x)\in H^{5}(\R)$  and  $c(x,t) \in C^\infty(\R\times \R^+) $
  and such that either there exist constants $c_\infty\in \R$ and $\kappa>0$ such that $1+\kappa \le c(x,t) \le 2 $ and
$c(x,t)-c_\infty \in C^1\left([0,\infty); H^6(\R)\right)$, with $$\sup_{t\in \R^+}\left(||c(\cdot,t)-c_\infty||_{H^6(\R)},
+||\pa_t c(\cdot,t)||_{H^6(\R)}\right)\leq C$$ or  $c(x,t)=1$. Then there exists a time $T>0$ and
$$f(x,t)\in C([0,T], H^4(\R))\cap C^1([0,T],H^3(\R)),$$ solving the equation \eqref{zmedio} with $\ep(x,t)=c(x,t)t$.
\end{thm}

\begin{rem}
The condition $c \ge 1+\kappa$ could be replaced by $c\ge 1$ plus technical conditions on the zeros of $c-1$ and the behaviour of $c$ at $\pm \infty$.
 This would only affect the proof
of lemma  \ref{symbolp_+}  which would be less neat. We have preferred to keep the statement of the theorem easy.  In order to deal with
low speed of opening $c<1$  different pseudodifferential machinery  is needed to deal with equation \eqref{zmedio}, thus we have not pursued the issue here. The $H^6$-condition is needed in the proof of \ref{ordenmasalto}.
Finally, in our proof we have prescribed  $\rho=\frac{\lambda}{\ep}$
as it is simplest continuous function and agrees with the entropy and maximal mixing solution in  the case of the flat interface. Other choices might be of interest though the proof would be technically  different as the velocity would change. We have not explored this later aspect.  {The fact that the maximum growth of the mixing zone is linear on $t$ seems to be intrinsically related to the problem and it is coherent with Darcy's law and the flat interface case. Our proof quickly breaks if we want to have sublinear growth (see equation \eqref{gamma}).}
\end{rem}

\begin{rem} Let us explain  why we need to ask five derivatives on the initial data. Firstly, in order to perform energy estimates we need to  quasi-linearize the equation as in lemmas \ref{ordenmasalto} and \ref{descomposicion}, where $l.o.t$ are defined in \ref{l.o.t}. The number of derivatives that we take it is enough to get the estimate in \ref{l.o.t} for the $l.o.t.$. We do not claim that the regularity can not be improved to get solutions to \ref{zmedio} with an initial data $f_0\in H^k$ and $k<5$.  The quasi-linearization of \ref{zmedio} in that case would be much more complicated. Secondly, in order to deal with the higher order terms in lemma \ref{descomposicion}, we need some regularity in the pseudodifferential operators that arise in section \ref{apriori}. The regularity of these operators is linked to that of the solution $f$. It turns that, again,  the number of derivatives we take suffices for our purposes.
\end{rem}

\subsection{Constructing a subsolution. Part 1}\label{subsolution1}

This section is dedicated to the proof of the following theorem.
\begin{thm}\label{sub}Let us assume that $f$, with $f(x,t)\in C^1([0,T]\times \R)$, solves the equation \eqref{zmedio}, with $c(x,t)$ as in theorem \ref{existencialocal}. Then there exists a M-subsolution of the IPM system for $t\in [0,T]$, $T$ small enough depending on $f_0(x)$, and for some $M$.
\end{thm}

We start by defining the mixing zone.
 For $x\in (-\infty,\infty)$ and $-\ep(x,t)<\lambda<\ep(x,t)$ we define the change of coordinates
\begin{align*}
\xb(x,\lambda)=(x,\, \lambda+ f(x,t)).
\end{align*}

We define the set $\Omega_{mix} \subset \R^2$  as follows
\begin{align}\label{zonademezcla}\Omega_{mix}=\{ \xb\in \R^2 \,:\, \xb=\xb(x,\lambda)\, \quad\text{for}\quad (x,\lambda)\in (-\infty,\infty)\times (-\ep(x,t),\ep(x,t))\}.\end{align}

Recall that, in $\Omega_{mix}$, our subsolutions $(\rho,\mb,\ub)$ should solve
\begin{align}
\pa_t\rho+\nabla\cdot \mb=&0\label{primera}\\
\ub= & \mathcal{BS}(-\pa_{x_1}\rho)\label{segunda}.
\end{align}

We prescribe $\mb$ to be of the form
 $$\mb=\rho\ub-(0,\alpha)\left(1-\rho^2\right),$$
 where $\alpha$ will be chosen later.
Then the transport equation  \eqref{primera} reads
\begin{align}\label{primerabis}
\pa_t\rho+ \ub\cdot \nabla \rho = \nabla \cdot \left((0,\alpha)\left(1-\rho^2\right)\right).
\end{align}

On the other hand we need $(\rho,\ub,\mb)\in \text{int } K^\Lambda$, in \eqref{convex1}, which is equivalent to
\begin{align} &\left(\alpha-\frac{1}{2}\right)^2<\left(\frac{1}{2}\right)^2\label{convex12},\\
&\rho^2<1.\label{convex22}
\end{align}

In fact, we need $(\rho,\ub,\mb)\in \text{int}K^\Lambda_M$, but we will take care of this later.

\subsubsection{The equations in $(x,\lambda)$-coordinates and the choices of $\rho$ and $\mb$}

Next we write the equation \eqref{primerabis} in $(x,\lambda)-$ coordinates. Let
 $g:\Omega_{mix} \to \R$ be a smooth function. We will denote
\begin{equation}
g^\sharp(x,\lambda)=g(\xb(x,\lambda)).
\end{equation}

Let us analyze the mixing error in these new coordinates. Set
\begin{equation}
\textbf{E}^\sharp(x,\lambda)=\left(1-\rhos {}^2\right)(0,\alpha^\sharp),
\end{equation}
which we split as,

$$\textbf{E}^\sharp= \overline{\fb}^\sharp+\eb^\sharp,$$
with
\begin{align*}
\overline{\fb}^\sharp=\left(1-\rhos {}^2\right)\left(0,\alpha^\sharp-\frac{1}{2}\right), &&
\eb^\sharp=\frac{1}{2}(0,1)\left(1-\rhos {}^2\right).
\end{align*}

We will define the density in the mixing zone to be \begin{align}\label{quienesrho} \rhos(x,\lambda)=\frac{\lambda}{\ep(x,t)}\end{align} and it will simplify the calculation to call $h^\sharp=\left(\alpha^\sharp-\frac{1}{2}\right)\left(1-\left(\rhos\right)^2\right)$.
Then $\rhos$  produces a density $\rho(\xb)$ satisfying the condition \eqref{convex22} in $\Omega_{mix}$. In addition $\rhos(\pm\ep)=\pm 1$ thus
\begin{align}\label{rho}
\rho(\xb)=\left\{\begin{array}{cc}\mp 1 & \text{in $\overline{\Omega^\pm}$}\\ \rho(\xb) & \text{in $\Omega_{mix}$}\end{array}\right.,
\end{align}
where $\Omega^+$ is the open domain below $\Omega_{mix}$ and $\Omega^-$ is the open domain above $\Omega_{mix}$, is  a continuous function in $\R^2$.

After, these choices, the next lemma describes the necessary conditions to be a subsolution.

\begin{lemma}\label{h} Let  $\rhos=\frac{\lambda}{\ep}$ and $\mb^\sharp=\rhos\ub^\sharp-\hs(0,1)-\eb^\sharp$ with $\eb^\sharp=\frac{1}{2}(0,1)(1-\rhos{}^2)$. Then, $\rho$, $\ub$ and $\mb$ satisfy the equation \eqref{primera} if and only if
\begin{align}\label{ecuacionparah}
\pa_\lambda \hs = \frac{\lambda}{\ep^2}+\pa_t\rhos+\frac{1}{\ep}\left(\ub^\sharp\cdot (-\pa_x f-\lambda \frac{\pa_x\ep}{\ep},1)-\pa_tf\right).
\end{align}
In addition if $$\hs=\gamma^\sharp(1-\rhos{}^2)$$ the inclusion \eqref{convex12} reads
$$|\gamma^\sharp{}|<\frac12.$$
\end{lemma}

\begin{proof} Since $(\pa_{x_1}\rho)(x,\lambda+f(x,t))= \rhos_x(x,\lambda)-\pa_x f(x,t)\pa_\lambda\rhos$ and $(\pa_{x_2}\rho)(x,\lambda+f(x,t))=\pa_\lambda\rhos(x,\lambda)$ we have that \begin{align}\label{1n} \ub(x,\lambda+f(x,t))\cdot \left(\nabla\rho\right)(x,\lambda+f(x,t))=\ub^\sharp(x,\lambda)\cdot\left(\pa_x\rhos(x,\lambda)-\pa_x f(x,t)\pa_\lambda\rhos,\,\pa_\lambda\rhos(x,\lambda)\right).\end{align}
Also
\begin{align}\label{2n}
\pa_t\rhos(x,\lambda)=&(\pa_t\rho)(x,\lambda+f(x,t))+(\pa_{x_2}\rho)(x,\lambda+f(x,t))\pa_t f \nonumber\\ =&(\pa_t\rho)(x,\lambda+f(x,t))+\pa_\lambda\rhos(x,\lambda)\pa_t f(x,t).
\end{align}

In addition,
\begin{align}\label{3n}
\nabla\cdot(\overline{\fb}+\eb)(x,\lambda+f(x,t))=\pa_\lambda h^\sharp(x,\lambda)+\pa_\lambda \eb^\sharp(x,\lambda)=\pa_\lambda h^\sharp-\rhos\pa_\lambda\rhos.
\end{align}

Evaluating \eqref{primerabis} at $\xb=(x,\lambda+f(x,t))$, putting together \eqref{1n}, \eqref{2n} and \eqref{3n} and taking into account \eqref{quienesrho} yields \eqref{ecuacionparah}.

Finally, if we define $$\gamma^\sharp=\frac{1}{1-\rhos{}^2}\hs,$$
the condition \eqref{convex12} reads
$$\gamma^\sharp{}^2<\left(\frac{1}{2}\right)^2.$$
\end{proof}

From lemma \ref{h} we have that in order to prove theorem  \ref{sub}, it is enough to show that $\gamma^\sharp{}^2<\frac12$ with $\gamma^\sharp$ given by
\begin{align}\label{gamma}
\gamma^\sharp\left(1-\rhos{}^2\right)=&\int_{-\ep}^\lambda \frac{\lambda'}{\ep^2}-\lambda\frac{\pa_t\ep}{\ep^2}+\frac{1}{\ep}\left(\ub^\sharp(x,\lambda)\cdot (-\pa_xf-\lambda\frac{\pa_x\ep}{\ep},1)-\pa_tf \right)d\lambda'\\
=&-\frac{(1-\ep_t)}{2}(1-\rhos{}^2)+\int_{-1}^{\rhos}\left(\ub^\sharp(x,\ep(x)\lambda')\cdot (-\pa_x f(x)-\pa_x\ep\lambda',1)-f_t\right)d\lambda'\nonumber,
\end{align}
$\ub$ given by the Biot-Savart law and
 $\rho(\xb)$ by \eqref{rho} and \eqref{quienesrho}.

\subsubsection{The velocity $\ub$ and the equation for the pseudointerface}
The velocity $\ub$ is given by the expression
\begin{align*}
\ub(\xb)=-\frac{1}{2\pi}\int_{\R^2}\frac{(\xb-\yb)^\perp}{|\xb-\yb|^2}\pa_{x_1}\rho(\yb)d\yb=
-\frac{1}{2\pi}\int_{\Omega_{mix}}\frac{(\xb-\yb)^\perp}{|\xb-\yb|^2}\pa_{x_1}\rho(\yb)d\yb.
\end{align*}
Then a change of coordinates yields
\begin{align}\label{velocidad}
\ub(\xb)&=-\frac{1}{\pi}\int_{-\infty}^\infty\frac{1}{2\ep(y)}\int_{-\ep(y)}^{\ep(y)} \frac{(\xb-\xb(y,\lambda'))^\perp}{|\xb-\xb(y,\lambda')|^2}\left(\pa_y f(y)+\lambda'\frac{\pa_y\ep(y)}{\ep(y)}\right)d\lambda'dy\nonumber\\
&=-\frac{1}{\pi}\int_{-\infty}^\infty\frac{1}{2}\int_{-1}^{1} \frac{(\xb-\xb(y,\ep(y)\lambda'))^\perp}{|\xb-\xb(y,\ep(y)\lambda')|^2}\left(\pa_y f(y)+\lambda'\pa_y\ep(y)\right)d\lambda'dy.
\end{align}

Next we will modify this expression  since it will help in the  proof of the local existence for the equation \eqref{zmedio}. This idea  has  been already introduced in \cite{dp}.
First we notice that
\begin{align*}
\frac{1}{2}\pa_{y}\log\left(|\xb-\xb(y,\ep(y)\lambda')|^2\right)=&-\frac{(x_1-y)}
{|\xb-\xb(y,\ep(y)\lambda')|^2}\\
&-\frac{x_2-\ep(y)\lambda'-f(y))(\pa_{y}f(y)+\lambda'\pa_y\ep(y))}
{|\xb-\xb(y,\ep(y)\lambda')|^2}.
\end{align*}
Thus since the integral of the left hand side is null (in the sense of the principal value) we can also write \eqref{velocidad} in the most
convenient form,
\begin{align}\label{velo}
\ub(\xb)=\frac{1}{\pi}P.V.\int_{-\infty}^\infty
\frac{1}{2}\int_{-1}^1 \frac{x_1-y}{|\xb-\xb(y,\ep(y)\lambda')|^2}(1,\pa_yf(y)+\lambda'\pa_y\ep(y))d\lambda'dy.
\end{align}

As we prove in the following lemma this velocity $\ub$ is in $L^\infty(\R^2)$.

\begin{lemma}\label{veloLinfty} Let $\ub$ be like in expression \eqref{velo} with $f \in H^4$ and $c$ as in theorem \eqref{existencialocal}. Then $\ub\in L^\infty(\R^2)$ and
$$||\ub(\cdot,t)||_{L^\infty(\R^2)}\leq P(||f||_{H^4})$$
for some smooth function $P$.
\end{lemma}
\begin{proof} The proof of this result is left to  appendix \ref{eftv}.
\end{proof}

We turn back to our equation \eqref{gamma}. It says that the evolution is governed by the following modified velocity.

\begin{align*}
u_c^\sharp(x,\lambda)&\equiv \ub^\sharp(x,\ep(x)\lambda)\cdot(-\pa_x f(x,t)-\pa_x\ep(x) \lambda,\,1)
\\&=\frac{1}{2\pi} P. V. \int_{\R}\int_{-1}^1
\frac{x-y}{(x-y)^2+(\ep(x)\lambda-\ep(y)\lambda'+f(x)-f(y))^2}\\&\qquad\qquad\qquad\quad\times (\pa_y f(y)-\pa_xf(x)+\pa_y\ep(y)\lambda'-\pa_x\ep(x)\lambda)d\lambda'dy,
\end{align*}

where the principal value is taken at infinity. Now, notice that by at $|\lambda|=1$, the left hand side of \eqref{gamma}  is $0$. Therefore a continuous solution
must satisfy that

\begin{align}
\pa_tf=\mathcal{M}u(x,t)=\frac{1}{2}\int_{-1}^1 u_c^\sharp(x,\lambda)d\lambda,
\end{align}
which is what motivates \eqref{zmedio}. Of course, the specific aspect of the kernel is prescribed by our ansatz for  $\rho$.

 Then, \eqref{gamma} reads

\begin{align}\label{gamma2}
&\gamma^\sharp\left(1-\rhos{}^2\right)=-\frac{(1-\ep_t)}{2}(1-\rhos{}^2)+\int_{-1}^{\rhos}\left(u^\sharp_c(x,\lambda')-f_t\right)d\lambda',
\end{align}

{\it Proof of theorem~\ref{sub}.
} We have already constructed a candidate to be  the subsolution. This candidate is given by $\left(\rho, \ub, \mb\right)$
 with $\rho^\sharp=\frac{\lambda}{\ep}$, $\ub$ as in \eqref{velocidad}, $\mb= \rho\ub-\gamma (1-\rho^2)(0,1)-\eb$, $\eb=\frac{1}{2}(0,1)(1-\rho{}^2)$, $\gamma^\sharp(s,\lambda)=\gamma(\xb(s,\lambda))$ and $\gamma^\sharp$ as in \eqref{gamma2}. Next, we show that  $|\gamma^\sharp{}|<\frac12$, as stated in lemma \ref{h}. Notice that \eqref{gamma2} yields,

 \[\gamma^\sharp=-\frac{(1-\ep_t)}{2}+\frac{1}{ 1-\rhos{}^2 } \int_{-1}^{\rhos}\left(u^\sharp_c(x,\lambda')-f_t\right)d\lambda', \]

We first focus on the first term on the right hand side of this equation.  Notice that
$|1-\pa_t\ep|\leq |1-c(x,t)|+ |\pa_t c(x,t)|t$.  Therefore, our choice of  $1\leq c(x,t)<2$ (see statement of theorem \ref{existencialocal}) implies that $|1-\pa_t\ep|<1$ for small enough time. Then to finish the proof it  is enough to prove that the second term in \eqref{gamma2} is as small as we want by making $t$ small.  This  term is problematic because the factor $\frac{1}{(1-{\rho^\sharp}^2)}$. However we will find a cancelation in order to control it by continuity.

 Here it is where we will use the relation between $\ep$ and $f$. First we will deal with the  part of $\Omega_{mix}$ which lies below the pseudointerface, i.e  $-\ep<\lambda<0$. We need to make
 small the term

\begin{align*}
&\left|\frac{1}{1-\rhos{}^2}\int_{-1}^{\rhos}(u_c^\sharp(x,\lambda')-\pa_t f) d\lambda'\right|
\\ &\leq C \frac{1}{1-\rhos}\sup_{x\in \R}  \sup_{-1<\lambda<0}\left|u_c^\sharp(x,\lambda)-\pa_t f(x,t)\right| \\ & \leq  C \sup_{x \in \R}  \sup_{-1<\lambda<0}\left|u^\sharp_c(x,\lambda)-\pa_t f(x,t)\right|. \\
\end{align*}
Here notice that  $\rhos<0$.

Then we see that, since

\begin{align*}u_c^\sharp(x,\lambda)-\pa_t f(x,t)=\frac{1}{2}\int_{-1}^{1}\left(u^\sharp_c(x,\lambda)-u^\sharp_c(x,\lambda')\right)d\lambda'.
\end{align*}
 Lemma \ref{falta}, where it is proven  that  $|u_c^\sharp(x,\lambda)-u_c^\sharp(x,\lambda')|=O(t)$  uniformly in $x$,  implies that  this term is as small as we want by taking $t$ small.

To  deal with the upper part of $\Omega_{mix}$ we use that our choice of pseudointerface,  \eqref{zmedio}, makes the situation rather symmetric. Indeed, it follows from   \eqref{zmedio}  that
\begin{align*}
\int_{-1}^{\rho^\sharp} (u_c^\sharp(x,\lambda')-\pa_t f) d\lambda'=
&-\int_{\rho^\sharp}^1 (u_c^\sharp(x,\lambda')-\pa_t f)  d\lambda'+\underbrace{\int_{-1}^1
(u_c^\sharp(x,\lambda')-\pa_t f)\ d\lambda'}_{=0}\\=
&-\int_{\rho^\sharp}^1 (u_c^\sharp(x,\lambda')-\pa_t f(x,t)) d\lambda'.
\end{align*}

Thus, the term  $\frac{1}{1-\rhos{}^2}|\int_{\lambda}^\ep\left(u^\sharp_c-\pa_tf\right) d\lambda'|,$ can be made arbitrarily small  by taking  $t$ small
as well. Hence we have proven that  there exists $T>0$, depending on $f_0$ and $c(x,t)$, such that $|\gamma^\sharp(x,\lambda,t)|<\frac12$
for $ (x,\lambda,t)\in  \mathbb{R}\times (-\ep(x,t),\ep(x,t))\times [0,T]$ as  desired.

Recall that lemma~\ref{veloLinfty} implies that $\ub \in L^\infty(\mathbb{R}^2 \times [0,T])$.

In order to conclude the proof of theorem \ref{sub} we need to check that $(\rho, \ub, \mb)$ is continuous in $(0,T)\times \Omega_{mix}(0,t)$ and that also satisfies \eqref{convex2}, \eqref{convex3} and \eqref{convex4}, for some $M>1$. The continuity is a consequence of that $\rho(\xb,t)$ is a Lipschitz function in $(0,T)\times \Omega_{mix}(t)$. Furthermore, if
$$M>8\left(||\ub||_{L^\infty}+1\right),$$
since $|\rho|\leq 1$ is easy to check \eqref{convex2}. In addition, in order to satisfy condition \eqref{convex3} we proceed as follows:
\begin{align*}
&|\mb-\ub-\frac{1}{2}(0,1-\rho)|=|\mb-\rho \ub -\frac{1}{2}(1-\rho^2)+\frac{1}{2}(1-\rho^2)-(1-\rho)\ub-\frac{1}{2}(0,1-\rho)|\\
&\leq \left((1+\rho)+|\ub|+\frac{1}{2}\right)(1-\rho),
\end{align*}
where we have used \eqref{convex1}. Then we see that \eqref{convex3} is satisfied. To check \eqref{convex4} we follows similar steps that for \eqref{convex3}.
\subsection{Constructing a subsolution. Part 2.}\label{subsolution2}
The bulk of the proof is  to show  energy estimates for \eqref{Evolution}. Before starting with the computation we will present a toy model to explain the strategy of the proof. Let us  consider the following equation
\begin{align}\label{toymodel1}
\pa_t f= & \left(\frac{1}{1+ct|\xi|}\right)^{\check{}} * \Lambda f\quad \text{in $\R\times \R^+$}\\
f(x,0)= & f^0(x)\nonumber,
\end{align}
where $1\leq c <2$.  In the Fourier side this equation reads
$$\pa_t \hat{f}(\xi)=\frac{|\xi|}{1+ct|\xi|}\hat{f}(\xi),$$
which can be solved explicitly. Indeed, the solutions are given by
\begin{align}\label{solc}
\hat{f}(\xi)=(1+ct|\xi|)^\frac{1}{c}\hat{f}_0(\xi).
\end{align}
From \eqref{solc} we see that the solution to \eqref{toymodel1} loses $\frac{1}{c}$-derivatives with respect to the initial data. Equation \eqref{zmedio} has a similar behaviour to \eqref{toymodel1} but there is no chance to find explicit solutions. Instead of that we will use energy estimates in the same way that the following energy estimate for \eqref{toymodel1}. We compute the time derivative of $\left|\left|\frac{\hat{f}(\xi)}{1+t|\xi|}\right|\right|_{L^2}$ to obtain that
\begin{align*}
\frac{1}{2}\pa_t \left|\left|\frac{\hat{f}(\xi)}{1+t|\xi|}\right|\right|^2_{L^2}\leq \int_{\R} \overline{\hat{f}(\xi)}\left(-\frac{|\xi|}{(1+t|\xi|)^2} +\frac{\pa_t \hat{f}(\xi)}{1+t|\xi|}\right)d\xi=\int_{\R}\left|\frac{\hat{f}}{1+t|\xi|}\right|^2|\xi|\left(\frac{-1}{1+t|\xi|}+\frac{1}{1+ct|\xi|}\right)d\xi,
\end{align*}
and since $\frac{1}{1+t|\xi|}\geq \frac{1}{1+ct|\xi|}\geq 0$ for $c\geq 1$ we can conclude that
\begin{align*}
\frac{1}{2}\pa_t \left|\left|\frac{\hat{f}(\xi)}{1+t|\xi|}\right|\right|^2_{L^2}\leq 0,
\end{align*}
and therefore $\left|\left|\frac{\hat{f}(\xi)}{1+t|\xi|}\right|\right|_{L^2}\leq ||f_0||_{L^2}$.

The same analysis for $\pa_x^2f$ yields the estimate
\begin{align}\label{aquella}\frac{1}{2}\pa_t \left|\left|\frac{\widehat{\pa_x^2 f}(\xi)}{1+t|\xi|}\right|\right|^2_{L^2}\leq 0.
\end{align}
 In addition, it is easy to see that $\pa_t ||f||_{L^2}^2\leq \frac{1}{2}||f||_{L^2}^2+\frac{1}{2}||\pa_x f||^2_{L^2}$. Furthermore,  $|\widehat{\pa_x f}(\xi)|$ is less or equal than $|\hat{f}(\xi)|$ for $|\xi|\leq 1$ and is less or equal than $\frac{2}{1+t|\xi|}|\widehat{\pa^{2}_x f}(\xi)|$ for $|\xi|>1$ and $t<1$ (this is just because $1\leq \frac{2|\xi|}{1+t|\xi|}$ in this range). Therefore, we have that $||\pa_xf ||^2_{L^2}\leq ||f||_{L^2}^2+ 4\left|\left|\frac{\widehat{\pa^2f}}{1+t|\xi|}\right|\right|^2_{L^2}$ and

\begin{align*}
\frac{1}{2}\pa_t ||f||_{L^2}^2\leq ||f||_{L^2}^2+ 2\left|\left|\frac{\widehat{\pa^2f}}{1+t|\xi|}\right|\right|^2_{L^2},
\end{align*}
which allows us to get, together with \eqref{aquella} that
\begin{align*}
||f||_{L^2}^2+\left|\left|\frac{\widehat{\pa_x^2f}}{1+t|\xi|}\right|\right|^2_{L^2}\leq \left(||f_0||_{L^2}^2+||\pa_x^2f_0||_{L^2}^2\right)e^{Ct},
\end{align*}
for any $t\leq 1$. Thus, for $t\leq 1$, we also control $||f||_{H^1}$, by losing one derivative with respect to the initial data, i.e.
\begin{align*}
||f(t)||_{H^1}\leq C||f_0||_{H^2},\quad \text{for $t<1$}.
\end{align*}

This strategy is flexible enough to be applied to the full system \eqref{zmedio} with the price of paying more derivatives with respect to the initial data than we actually need.
In \eqref{zmedio} we are dealing with pseudodifferential operators but arguing semiclassically we will show that they  behave as Fourier multipliers up to factors of $t$. This
is the content of the following sections.

\subsubsection{First manipulations of the equation and of the mean velocity $\mathcal{M}u$} \label{mu2}

In order to obtain energy estimates for the equation \eqref{zmedio} we need to take $5$ derivatives with respect to $x$ in  both sides of the equation.  We describe   $\pa_x^5 \mathcal{M}u$ as the sum of a main term  and lower order terms.
Since we expect to lose one derivative respect to initial data (e.g by the toy model) we will work with the Fourier multipliers,
\[\widehat{D^{-1}f}(\xi)=\frac{1}{1+2 \pi i t\xi}\hat{f}(\xi), \qquad \widehat{\mathcal{D} f}(\xi)=(1+2 \pi i t\xi)\hat{f}(\xi).\]
Notice that when $t=0$, $\mathcal{D}^{-1}$ equals to the identity and therefore it is not smoothing.

\begin{defi}\label{l.o.t} We say that a function $G(x,t):\R \times [0,T]$ is a lower order term,  $l.o.t.$, if and only if
$$||\mathcal{D}^{-1}G||_{L^2}\leq C\left(||f||_{H^4}+||\pa^5_x \mathcal{D}^{-1} f||_{L^2}\right),$$
for some smooth function $C:\R \to \R$.
\end{defi}
\begin{lemma}\label{ordenmasalto}Let $f\in H^{6}$ and $\ep=c(x,t)t$ with $c$ as in the statement of theorem \ref{existencialocal} and $0<t<1$. Then
\begin{align*}
\pa_x^{5} \mathcal{M}u=-\int_{\R} \Delta \pa^{6}_xf(x,x-y)K(x,x-y)dy + l.o.t.,
\end{align*}
where
\begin{align*}
K(x,x-y)=\frac{1}{4\pi}\int_{-1}^1\int_{-1}^1\frac{y}{y^2+(\Delta f (x,x-y)+\ep(x)\lambda-\ep(x-y)\lambda')^2}d\lambda d\lambda',
\end{align*}
and  $l.o.t.$ defined as in \ref{l.o.t}.
\end{lemma}
\begin{proof} The proof is left to appendix \ref{ordenmasaltosection}.\end{proof}

We still need to simplify the kernel $K(x,y)$ (which depends on $f$ in a nonlinear way). Actually we can linearize it as  the next lemma shows.
\begin{lemma} \label{descomposicion}Let $f\in H^{6}$ and $\ep=c(x,t)t$ with $c$ as in the statement of theorem \ref{existencialocal} and $0<t<1$. Then
\begin{align*}
\pa^{5}_x \mathcal{M}u=\int_{\R} \pa^{6}_x f(x-y)K^{c(x),\pa_xc(x)}_{\pa_xf(x)}(y)dy + a(x,t)\pa^{6}_x f(x)+l.o.t,
\end{align*}
where

\begin{align*}
K^{c(x),\pa_x c(x)}_{\pa_x f(x)}(y)=\frac{1}{4\pi}\int_{-1}^1\int_{-1}^1 \int_{\R}\frac{y}{y^2+(\pa_xf(x)y+\pa_xc(x)ty\lambda'+c(x)t(\lambda-\lambda'))^2} d\lambda d\lambda',
\end{align*}
\begin{align*}
a(x,t)\equiv - P.V.\int_{\R} K(x,y)dy,
\end{align*}
and $l.o.t.$ defined as in \ref{l.o.t}.
\end{lemma}

\begin{proof} This lemma is proven in appendix \ref{apendice4}.\end{proof}
We will deal with the equation mostly on the Fourier side. In order to show the relation with the toy model
in the following lemma we present the Fourier transform of \begin{align}\label{kcca}K^{c,c'}_A(y)=\frac{1}{4\pi}\int_{-1}^1\int_{-1}^1\frac{y}{y^2+(Ay+c'ty\lambda'+c t(\lambda-\lambda'))^2}d\lambda d\lambda'.\end{align}
Notice that to compute the Fourier transform $A,c,c'$ are taking as constants. In the application they are functions of $x$
but not of $y$.
\begin{lemma}\label{fka}
Let $K^{c,c'}_{A}$ as in \eqref{kcca} with $A,\,c'\in \R$ and $c>0$. Then its Fourier transform is given by
\begin{align}\label{fourierkcca}
\hat{K}^{c,c'}_{A}(\xi)=\frac{-i\text{sign}(\xi)}{4\cdot 2\pi c t |\xi|}\int_{-1}^{1}
\left(2-e^{2\pi\sigma_{\lambda'}ct|\xi|(-1-\lambda')(A_{\lambda'}i\text{sign}(\xi)+1)}
-e^{2\pi\sigma_{\lambda'}|\xi|(1-\lambda')(A_{\lambda'}i\text{sign}(\xi)-1)}\right)  d\lambda'.
\end{align}
where $\sigma_{\lambda'}=\frac{1}{1+A_{\lambda'}^2}$ and $A_{\lambda'}=A+c't\lambda'$.

In addition
\begin{align*}
\hat{K}^{c,0}_{A}(\xi)=\frac{-i\sign(\xi)}{2\pi c t|\xi|}\left(1+\frac{1}{4\pi c t |\xi|}\left(e^{-4\pi\sigma c t |\xi|}\left(\cos(4\pi\sigma A c t |\xi|)-A\sin(4\pi\sigma A c t |\xi|)\right)-1\right)\right),
\end{align*}
where $\sigma=\frac{1}{1+A^2}$.
\end{lemma}
\begin{proof} This lemma will be proven in appendix \ref{apendice5}, lemma~\ref{fkcca}.\end{proof}

In spite of its behaviour, a careful Taylor expansion $\hat{K}^{c,0}_{A}$ at zero (using $\sigma=\frac{1}{1+A^2}$) shows that it is bounded. On the other hand for large semiclassical frequencies $t|\xi|$ behaves like
 $\frac{-i \sign(\xi)}{2\pi ct |\xi|}$. These  two observations suggested the toy model from the beginning of the section.

 The next lemma describes more precisely the growth of   $\hat{K}^{c,0}_{A}$. It is dramatic to frame ourselves in  the realm  of positive symbols and to guess the correct energy estimate

\begin{lemma}\label{Upbound}The following estimate holds for every $(x,\xi,t) \in \R\times\R \times \R^+$ and $1\leq c\leq 2$,
$$\left|2\pi i \sign(\xi)\hat{K}^{c,0}_{A}(\xi)\right|\leq \frac{\frac{1}{c}}{1+t|\xi|}+\frac{2|A|+5+8\pi}{1+(t|\xi|)^2}.$$
\end{lemma}
\begin{proof} The proof of this lemma can be found in  appendix \ref{apendice5}, lemma~\ref{cotakc0}
\end{proof}

\subsubsection{A priori energy estimates for the quasi-linear equation}\label{apriori}

Lemma~\ref{descomposicion} says that     if $f$ is an smooth solution of \eqref{zmedio}  and we call $F(x,t)=\pa^{5}_xf(x,t)$ and $A(x,t)=\pa_xf(x,t)$ it holds that
\begin{align}\label{F1}
\pa_t F(x,t)= \int_{\R}K^{c(x),\pa_x c(x)}_{A(x)}(x-y) \pa_x F(y)dy+a(x)\pa_x F(x)+G(x),
\end{align}
where $G(x)$ is a $l.o.t$.
Let us write the equation closer to the spirit of pseudodifferential operators.
We will define the operation
\begin{align*}
K^{c(x),\pa_xc(x)}_{A(x)}\otimes f(x)=\int_{\R}K^{c(x),\pa_x c(x)}_{A(x)}(x-y) f(y)dy,
\end{align*}
in such a way that the equation \eqref{F1} reads as
\begin{align}\label{ecuF}
\pa_t F= K^{c(x),\pa_x c(x)}_{A(x)}\otimes \pa_xF(x)+ a(x)\pa_x F(x)+G(x).
\end{align}

Notice that the pseudoconvolution $\otimes$ can be alternatively expressed as,
\[  K \otimes f(x)=\Op(p)f(x), \]
where $K$ is the Schwarz kernel of the symbol $p$, i.e., $$p(x,\xi)=\int_{\R}  e^{-2\pi i y \xi} K(x,y)dy.$$

\paragraph{Definition of Symbols}

The upper bound in lemma \ref{Upbound} motivates  the definition of the following
pseudodifferential operator $\mathcal{J}^{-1}$. First we define the function $\varphi\,:\, \R^+\to \R^+$ in the following way
\begin{align}\label{varphi}
\varphi(\tau)=\frac{1}{1+\tau}+\frac{B}{1+\tau^2},
\end{align}
where $B$ is a constant that just depends on $|| f_0||_{H^5}$, $f_0$ being the initial data in \eqref{zmedio}. It suffices to take   \begin{align}\label{Bdefi} B=200 || f_0||_{H^5}+200.\end{align}
 Next we define the multiplier $j^{-1}(\xi)=e^{-\int_0^{t|\xi|}\varphi(\tau)d\tau}$
which satisfies
\begin{equation}
\partial_t  j^{-1}=-|\xi| \varphi(t|\xi|)j^{-1}.
\end{equation}
Hence, the corresponding operator $\mathcal{J}^{-1}$ of degree $-1$ is given
by the expression
\begin{align*}
\widehat{\mathcal{J}^{-1}f}(\xi)=e^{-\int_0^{t|\xi|}\varphi(\tau)d\tau}\hat{f}(\xi).
\end{align*}

Here we remark that since $\frac{1}{C}<j^{-1}(t|\xi|)(1+t|\xi|)\le C$,  $\mathcal{J}^{-1}$ is comparable to $\mathcal{D}^{-1}$
meaning that

$$ \frac{1}{C}||\mathcal{J}^{-1}f||_{L^2\to L^2}\leq  ||\mathcal{D}^{-1}f||_{L^2\to L^2}\leq C ||\mathcal{J}^{-1}f||_{L^2\to L^2},$$ where $C$ just depend on $B$.

If we read the right hand side of \eqref{ecuF} as an operator on $F$, the main part is  described by the symbol
$$p_{main}(x,\xi)\equiv 2\pi i \xi \hat{K}^{c,c'}_{A}.$$ This is a bounded symbol in $\xi$ and $x$, but its $L^\infty$ norm blows as $t^{-1}$. This is problematic to get an   uniform in time apriori estimate. Next we explain the strategy to deal with this issue. Firstly, we introduce  a suitable decomposition of $p_{main}$. The symbols $p$, $p_b$, $p_{\textrm{good}}$ and $p_+$ will be given by the expressions

\begin{align}\label{simbolos}
p=&2\pi i \xi \hat{K}^{c,0}_{A}, &
p_b=&2\pi i \xi \left(p_{main}-p\right),&
p_{\textrm{good}}=&\frac{1}{|\xi|}p-\varphi(t|\xi|),
 \quad \text{and} \quad  p_+=-(1+|\xi|) p_{\textrm{good}}.\end{align}

We point out that all of these symbols are even in $\xi$ and therefore the corresponding pseudodifferential operator are real valued.

Secondly,  we observe that $Op(p_b)$ is a bounded operator from $L^2$ to $L^2$. Then we observe that the growth of $p$ is controlled by $|\xi|\varphi$. Thus, the  G\aa rding inequality, Lemma~\ref{positive}, allows to control the norm of $Op(p_+)$ from $L^2$ to $L^2$ in terms of the norms $p_+^\frac12$. As expected, these norms blow up as $t^{-\frac12}$.  This is integrable near $0$ and suffices to our purposes.

Hence we are led to study the problem
\begin{equation} \label{pvarphi}\partial_t f=\Op(|\xi|\varphi(|t\xi|)) f. \end{equation}
Integrating the equation \eqref{pvarphi}, as in the toy problem, leads to  $\partial_t\|\mathcal{J}^{-1} F(\cdot,t)\|^2=0$. Thus in the fully nonlinear case there is the hope of the existence of energy estimate for that quantity. Indeed, this is
the case, but a few manipulations show that then  correlation between $\mathcal{J}$ and $p_{main}$   needs to be estimated as well. Happily even if $p_{main}$ blows like $t^{-1}$, this is compensated by the $t$ provided by our non smooth  semiclassical estimates.
Therefore the worst behaviour is given by $p_+^\frac12$.  The following apriori estimate  shows how   these heuristics are made rigorous.

\begin{thm}\label{estimaciondeenergia}Let $f$ be a smooth solution to the equation \eqref{zmedio} and $c$ as in the statement of theorem \ref{existencialocal}. Set $F=\pa^5_x f$. Let $0<T_p<1$ small enough such that $2||\pa_x f(\cdot,t)||_{L^\infty}+5+8\pi\leq \frac{B}{2}$, with $B$ as in \eqref{Bdefi}.  Then, if $t \in [0,T_p]$, it holds that

\[ \partial_t\|\mathcal{J}^{-1} F(\cdot,t)\|_{L^2} \leq  \frac{1}{\sqrt{t}}M\left(||f_0||_{H^5}, \left(||f||_{L^2}+ \|\mathcal{J}^{-1} F\|_{L^2}\right)\right),\]

where  $M$ is an smooth  function $M\, : \R^+\times\R^+\to\R^+$, positive and finite.
\end{thm}

\begin{proof}

Firstly, we recall that by lemma \ref{descomposicion}
\begin{align*}
\pa_t F= Op(p_{main})F+ a\pa_x F+G,
\end{align*}
where $G$ stands for l.o.t. in the sense of definition \ref{l.o.t}.
Secondly, it is crucial for our estimates that if  $t<T_p$, lemma~\ref{Upbound} and the definition of $\varphi$ implies
that $p_+>0$, ($p_+$ is even for all times).

Next we compute the time derivative,  and express it in terms of the  symbols,

\begin{align*}
&\frac{1}{2}\pa_t\int_{\R}|\mathcal{J}^{-1}F(x)|^2 dx=\int_{\R}\mathcal{J}^{-1}F\pa_t \mathcal{J}^{-1} Fdx\\
&=\int_{\R}\mathcal{J}^{-1}F \int_{\R}e^{2\pi i x\xi}\pa_t\left(j^{-1}(t|\xi|)\hat{F}(\xi)\right) d\xi dx=
\int_{\R}\mathcal{J}^{-1}F\int_{\R}e^{2\pi i x\xi}j^{-1}(t|\xi|)\left(-|\xi|\varphi(t|\xi|)+\widehat{\pa_tF}\right)d\xi dx\\
&=\int_{\R}\mathcal{J}^{-1}F\int_{\R}e^{2\pi i x\xi}j^{-1}(t|\xi|)\left(-|\xi|\varphi(t|\xi|)\hat{F}(\xi)+\mathcal{F}[Op(p_{main})F](\xi)\right)d\xi dx\\
&+\int_{\R}\mathcal{J}^{-1}F\int_{\R}e^{2\pi i x\xi}j^{-1}(t|\xi|)\widehat{a\pa_x F}(\xi)d\xi dx\\
&+\int_{\R}\mathcal{J}^{-1}F\mathcal{J}^{-1}Gdx.
\end{align*}

We denote $g=\mathcal{J}^{-1}F$ and we will split the  term  $$ \int_{\R}e^{2\pi i x\xi}j^{-1}(t|\xi|)\left(-|\xi|\varphi(t|\xi|)\hat{F}(\xi)+\mathcal{F}[Op(p_{main})F](\xi)\right)d\xi dx$$
in the following way
\begin{align*}
&\int_{\R}e^{2\pi i x\xi}\left(-|\xi|\varphi(t|\xi|)\hat{g}(\xi)+\mathcal{F}[\mathcal{J}^{-1}\circ Op(p_{main})\circ \mathcal{J} g](\xi)\right)d\xi \\
&=\int_{\R}e^{2\pi i x\xi}\left(-|\xi|\varphi(t|\xi|)+p_{main}(x,\xi)\right)\hat{g}(\xi)d\xi\\
&+  \mathcal{J}^{-1}\circ Op(p_{main})\circ \mathcal{J} g-Op(p_{main})g\\
&= -Op(p_+)g+ Op(\varphi-\frac{1}{|\xi|}p)g+Op(p_b)g+\mathcal{J}^{-1}[Op(p_{main}),\, \mathcal{J}]g,
\end{align*}
where we have just added and subtracted  $Op(p_{main})g$ in the first equality and in the second one we have used the definition of $p_{main}$ and $p_+$.

Then,

\begin{align*}
&\pa_t \|g\|^2_{L^2}\leq -\underbrace{\int_{\R} gOp(p_+)g dx}_{I_+(g)}+ \underbrace{\int_{\R} g Op(\varphi-\frac{1}{|\xi|}p)gdx}_{I_{good}(g)}+\underbrace{\int_{\R}g Op(p_b)gdx}_{I_b(g)}+\underbrace{\int_{\R}g \mathcal{J}^{-1}[Op(p_{main}),\, \mathcal{J}]g dx}_{I_{com}}\\
&+\underbrace{\int_{\R} g \mathcal{J}^{-1}(a\pa_xF)dx}_{I_{transport}(g)} +\underbrace{\int_{\R}g \mathcal{J}^{-1}G dx}_{I_{l.o.t.}(g)}.
\end{align*}

We recall that the symbols $|||f|||$ and $\lar$ will denote some polynomial function evaluated in $||f||_{H^4}$
and $||A||_{H^3}$ respectively ($A=\partial_xf$ ). Thus since $\|\mathcal{J}^{-1}F\|_{L^2} $ is comparable with $\|D^{-1}F \|_{L^2}$ it holds
that, for finite time,

\begin{equation}\label{glot} |||f|||+\lar \le C\left(\|f\|_{L^2}+ \|\mathcal{J}^{-1}F\|_{L^2}\right),
\end{equation}
 where the right hand side of \eqref{glot} $C$ means a smooth function evaluated at $\|f\|_{L^2}+ \|\mathcal{J}^{-1}F\|_{L^2}$.

We can estimate this collection of terms in the following way:

\begin{enumerate}
 \item $|I_{+}(g)| \leq \frac{\lar }{\sqrt{t}}\|g\|^2_{L^2}$. In order to get this inequality we first use lemma~\ref{positive}. After that we use that
  $$\|p_+^\frac12\|_{1,1}  \left|\left|Op\left(p_+^\frac12\right)^{skew} \right|\right|_{L^2\to L^2}+ \left|\left|\mathfrak{C}\left(p_+^\frac12,p_+^\frac12\right)\right|\right|_{L^2\to L^2} \leq \lar t^{-\frac12}.$$
(See section \ref{notation} for the notation).  The estimate for $\|p_+^\frac12\|_{1,1}\leq \lar t^{-\frac{1}{2}}$ can be found in lemma \ref{symbolp_+}. The bound for $\left|\left|\Op\left(p_+^\frac12\right)^{skew} \right|\right|_{L^2\to L^2}\leq \lar $ is a consequence of theorem \ref{adjoints} and lemma \ref{symbolp_+}. The bound for $\left|\left|\mathfrak{C}\left(p_+^\frac12,p_+^\frac12\right)\right|\right|\leq \lar t^{-\frac{1}{2}}$ follows from theorem \ref{CastroHwangComp} and lemma \ref{symbolp_+}.

\item $|I_{good}(g)|+|I_{b}(g)|\leq \lar ||g||_{L^2}^2$
  by the estimates $||Op(p_b)||_{L^2\to L^2}+||Op(p_{\textrm{good}})||_{L^2\to L^2}\leq \lar.$ These estimates are a consequence of theorem \ref{Hwan} and lemma \ref{pestimates}.

\item $|I_{Com}(g)| \le \lar  \|g\|^2_{L^2}$ follows from $|| \mathcal{J}^{-1}[Op(p_{main}),\mathcal{J}]||_{L^2\to L^2}\leq \lar$. This estimate is a consequence of theorem \ref{ComJp} and lemma \ref{pestimates}.

\item  $|I_{transport}(g)| \le |||f||| \|g\|^2_{L^2}$ by lemma~\ref{transport} and the estimate for the norm of $a$, given in lemma \ref{alemma}.

\item   $|I_{l.o.t}(g)| \le \|\mathcal{J}^{-1}G\|_{L^2}\|g\|_{L^2} \le C  \|\mathcal{D}^{-1}G\|_{L^2} \|g\|_{L^2}  \le  C(|||f|||)  \|g\|_{L^2}$
 where $C$ is the function appearing in the  definition of lower order terms, definition~\ref{l.o.t}.
 \end{enumerate}

 Finally notice that in the definition of $\varphi$, appears a constant $B$ which depends on $f_0$. Thus  as long as
 $0<t<T_p$, since  $p_+>0$, the claim follows
 where the function $M$ is built from the function $C$ and a high power of $\|f\|_{L^2}+\|g\|_{L^2}$.

\end{proof}

\begin{proposition} \label{apriorif} Let $f$ be and smooth solution of equation \eqref{zmedio}, with $f_0\in H^5$ and $c$ as in theorem \ref{existencialocal}. Then there is $T=T(\|f_0\|_{H^5})$ such that
\begin{align*}
\sup_{0<t<T}||f||_{H^4}\leq \sup_{0<t<T}\left(||f||_{L^2}+2||\mathcal{D}^{-1}F||_{L^2}\right)\leq P\left(||f_0||_{H^{5}}\right)
\end{align*}
where $P$ is some bounded function.
\end{proposition}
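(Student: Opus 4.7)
The plan is to combine the two main a priori ingredients already prepared in the paper---the differential inequality of Lemma \ref{422} and the coercivity statement $\|\tilde{\mathfrak{m}}\mathcal{D}^{-1}F\|_{L^2}\ge c\|\mathcal{D}^{-1}F\|_{L^2}$ announced as Lemma \ref{positivo} in Section \ref{coercivity}---with a bootstrap that closes the implicit dependence on $\|f\|_{H^4}$ hidden inside $|||f|||$.

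The first inequality in the statement is purely a Fourier-side comparison. Since $F=\partial_x^5 f$,
$$\|\mathcal{D}^{-1}F\|_{L^2}^2=\int_{\R}\frac{(2\pi|\xi|)^{10}}{(1+t|\xi|)^{2}}|\hat{f}(\xi)|^{2}\,d\xi,$$
and for $t\le T\le 1$ and $|\xi|\ge 1$ the factor $(1+t|\xi|)^{-2}$ dominates $c|\xi|^{-2}$, so the integrand is bounded below by $c|\xi|^{8}|\hat{f}|^{2}$. Combined with $\|f\|_{L^2}^{2}$ controlling the low-frequency piece $\int_{|\xi|<1}(1+|\xi|^{2})^{4}|\hat{f}|^{2}$, this yields $\|f\|_{H^{4}}^{2}\le C\left(\|f\|_{L^{2}}^{2}+\|\mathcal{D}^{-1}F\|_{L^{2}}^{2}\right)$, uniformly in $t\in[0,T]$.

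For the second inequality, combining Lemma \ref{422} with coercivity produces the closed differential inequality
$$\partial_{t}\|\tilde{\mathfrak{m}}\mathcal{D}^{-1}F\|_{L^{2}}^{2}\le C\,|||f|||\,\|\tilde{\mathfrak{m}}\mathcal{D}^{-1}F\|_{L^{2}}^{2}.$$
At $t=0$ the symbol $\tilde{m}(\xi,x,0)$ equals $1$ and $\mathcal{D}^{-1}$ reduces to the identity, so $\|\tilde{\mathfrak{m}}\mathcal{D}^{-1}F\|_{L^{2}}\big|_{t=0}=\|\partial_{x}^{5}f_{0}\|_{L^{2}}\le\|f_{0}\|_{H^{5}}$. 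The $L^{2}$ piece is handled separately by multiplying \eqref{fmedio} by $f$, integrating, and using the $L^{\infty}$ bound on $\mathcal{M}u_{2}$ from Lemma \ref{veloLinfty}, which gives $\|f(t)\|_{L^{2}}\le\|f_{0}\|_{L^{2}}+tP(\|f\|_{H^{4}})$. I then close the loop by a standard continuity argument: set $T^{*}=\sup\{t\in[0,T]:\|f(\cdot,s)\|_{H^{4}}\le 2P_{0}(\|f_{0}\|_{H^{5}})\text{ for all }s\le t\}$ for a suitable $P_{0}$. On $[0,T^{*}]$ the quantity $|||f|||$ is bounded by a constant depending only on $\|f_{0}\|_{H^{5}}$; Gronwall yields an exponential bound on $\|\tilde{\mathfrak{m}}\mathcal{D}^{-1}F(t)\|_{L^{2}}$, and the Fourier comparison propagates this to $\|f(t)\|_{H^{4}}$. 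Choosing $T$ small enough that the resulting bound is strictly below $2P_{0}(\|f_{0}\|_{H^{5}})$ forces $T^{*}=T$.

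The main technical obstacle is the coercivity statement itself. The symbol $\tilde{m}(\xi,x,t)=e^{-H(t|\xi|,A(x,t))+\log(1+t|\xi|)}$ is a genuine $x$-dependent pseudo-differential symbol whose pointwise lower bound \eqref{boundedness} does not automatically translate into an $L^{2}$ operator lower bound for $\tilde{\mathfrak{m}}$; this requires the delicate pseudo-differential analysis carried out in Section \ref{coercivity} and is valid only for small times. Once coercivity is granted, the remaining work reduces to the Fourier comparison and the bootstrap sketched above, which are routine.
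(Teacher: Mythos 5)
Your proposal is correct and follows essentially the same route as the paper: the proof there consists precisely of combining the differential inequality of lemma \ref{422} with the coercivity of $\tilde{\mathfrak{m}}$ from lemma \ref{positivo}, and your Fourier comparison, Gronwall estimate, initial-data observation ($\tilde m\equiv 1$, $\mathcal{D}^{-1}=\mathbb{I}$ at $t=0$, whence the $H^5$ hypothesis) and continuity bootstrap simply make explicit what the paper leaves implicit. One small repair: the bound $\|f(t)\|_{L^2}\le \|f_0\|_{L^2}+t\,P(\|f\|_{H^4})$ should come from an $L^2$ (not $L^\infty$) bound on $\mathcal{M}u_2$ --- e.g. writing $\mathcal{M}u_2=a\,\partial_x f+\int K_\ep(\cdot,y)\,\partial_y f(y)\,dy$ and using the kernel and coefficient estimates --- since an $L^\infty$ bound on $\partial_t f$ alone does not control an $L^2$ norm on the unbounded line.
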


\begin{proof} Let $u(t)=(||f||_{L^2}+ ||D^{-1}\pa_x^5f||_{L^2})^2$.   From theorem \ref{estimaciondeenergia} and since $\pa_t||f||_{L^2}$ is easy to control by a function of $u(t)$, we have that, for $t\in [0,T_p]$,

\begin{equation}\label{ode}
\frac{\pa_t u(t)}{M(||f_0||_{H^5},u(t))}\leq \frac{1}{\sqrt{t}}.
\end{equation}
Since $M$ is positive, the function  $U:\R^+ \to \R$ defined by   $U(x)=\int_0^x \frac{1}{M(||f^0||_{H^5}, y)}dy$ is increasing. Let us   integrate both sides of
\eqref{ode} respect to time.  Since $|U(u(0))| \le U(||f^0||_{H^5})$, it follows that
\begin{align*}
U(u(t))\leq U(||f^0||_{H^5})+2\sqrt{t}.
\end{align*}
Since $U(x)$ is increasing,  we see than for small time depending on $f_0$ the initial data all smooth solutions
satisfy that

\[u(t) \le P(\|f_0\|_{H^5}) .\]
In particular since the time of positiveness $T_p$ depends on $|\pa_xf|$, this
yields a lower bound $T_p$ which depends on $||f_0||_{H^5}$ but not on $f$. Thus we can select   $T$, in such a way that we achieve the conclusion of proposition \eqref{apriorif}.
\end{proof}

\subsubsection{The regularized system and local existence} In order to be able to apply a Picard's theorem we will regularize the system by using two parameters, $\delta$ and $\kappa$. With the parameter $\delta$ we regularize the transport term and with the parameter $\kappa$ the nonlocal  operator.  We will consider  the following equation for $ f^{\kappa,\,\delta}(x,t)$,
\begin{align}\label{fmedior}
\pa_t f^{\kappa,\,\delta}(x)= & -\phi_\delta * \int_{-\infty}^\infty \left(\phi_{\delta}*\pa_x f^{\kappa,\,\delta}(x)- \phi_\delta * \pa_y f^{\kappa,\, \delta}(y)\right)K_{\ep_\kappa}(x,y)dy \nonumber\\ &\underbrace{-\frac{1}{4\pi}\int_{\R}\int_{-1}^{1}\int_{-1}^1\frac{\pa_x\ep_\kappa(x)\lambda-\pa_x\ep_\kappa(x-y)\lambda'}{y^2+\left(\Delta f(x,x-y)+\ep_\kappa(x)\lambda-\ep_\kappa(x-y)\lambda'\right)^2}d\lambda d\lambda'dy}_{G_\kappa[f^{\kappa,\,\delta}]}\nonumber\\ & +\kappa \phi_\delta*\pa_x^2 \phi_\delta* f^{\kappa,\delta}  \\ f^{\kappa,\,\delta}(x,0)= & f^0(x),\nonumber\\
\end{align}
where $\kappa,\, \delta >0$, $\phi$ is a positive and smooth function with mean equal to one and $\phi_\delta=\frac{1}{\delta}\phi\left(\frac{x}{\delta}\right)$ and $K_{\kappa}(x,y)$ is like $K(x,y)$ in lemma \ref{ordenmasalto} but replacing $\ep(x,t)=c(x,t)t$ by $\ep_\kappa(x,t)=c(x,t)(t+\kappa)$ (also $\ep(y,t)=c(y,t)t$ pass to $c(y,t)(t+\kappa)$).

The Picard's theorem that we  will  apply is the following
\begin{thm}[Picard]\label{picard} Let $B$ be a Banach space and $O\subset B$ an open set. Let us consider the equation
\begin{align}\label{p1}
\frac{d \textbf{X}(t)}{dt}= & \textbf{F}[\textbf{X},t]\\
\textbf{X}(0)= & \textbf{X}_0,\label{p2}
\end{align}
where
\begin{align*}
\textbf{F}[\cdot,t] \,:\, O \to B \quad \text{for $|t|<\eta$, for some $\eta>0$}
\end{align*}
is continuous in a neighbourhood of  $\textbf{X}_0\subset O$. Suppose further that,  $\textbf{F}$ is Lipschitz in $O$, i.e.,
\begin{align*}
||\textbf{F}[\textbf{X}^1,t]-\textbf{F}[\textbf{X}^2,t]||_{B}\leq C(O)||\textbf{X}^1-\textbf{X}^2||_{B},\quad \text{for $|t|\leq \eta$},
\end{align*}
and $\textbf{F}[\textbf{X}_0,t]$ is a continuous function of $t$ for $t\leq |\eta|$ with values on $B$, with $||\textbf{F}[X_0,t]||_{B}\leq C$. Then,
there exist $T>0$ and  a unique $\textbf{X}(t)\in C^1([-T,T], O)$ solving \eqref{p1}, \eqref{p2}.
\end{thm}

By applying theorem \ref{picard} the following result holds:
\begin{thm} Let $f^0\in H^4(\R)$, $c$ as in theorem \ref{existencialocal}  and $\delta$, $\kappa >0$. Then there exist $T^{\kappa,\delta}>0$ (depending on $\kappa$ and $\delta$) and
$$f^{\kappa,\, \delta}\in C((-T^{\kappa,\delta},T^{\kappa,\delta}); H^4(\R))$$ such that $ f^{\kappa,\,\delta}(x,t))$ solves the system \eqref{fmedior}. In addition, this solution can be extended if its $H^4$-norm is bounded.
\end{thm}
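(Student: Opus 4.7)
\begin{proofsth}{Theorem (regularized local existence)}
The plan is to apply the Picard theorem (Theorem~\ref{picard}) with Banach space $B=H^{4}(\R)\times \R$ (the second factor tracking $\ep^{\kappa}$, although since $\ep^{\kappa}$ solves the trivial ODE $\pa_t \ep^\kappa=1$ we can essentially treat it as the explicit function $\ep^\kappa(t)=t$ and reduce the problem to finding $f^{\kappa,\delta}$ in $H^4$). We take $O$ to be a large ball around $f^0$ in $H^4$, say $O=\{g\in H^4(\R): \|g-f^0\|_{H^4}<R\}$ for some $R>0$ to be fixed. Writing the right-hand side of \eqref{fmedior} as $\mathbf{F}[f,\ep]=\mathbf{F}_1[f,\ep]+\mathbf{F}_2[f]$, where
\begin{equation*}
\mathbf{F}_1[f,\ep](x)=-\tfrac{1}{\pi}\phi_\delta * \!\!\int_{-\infty}^{\infty}\!\!\left(\phi_\delta*\pa_x f(x)-\phi_\delta*\pa_y f(y)\right)K_{\ep+\kappa}(x,y)\,dy, \qquad \mathbf{F}_2[f]=\kappa\,\phi_\delta*\pa_x^2\phi_\delta*f,
\end{equation*}
the task reduces to proving that $\mathbf{F}:O\to H^4(\R)$ is bounded and Lipschitz, with constants allowed to depend on $\kappa$, $\delta$, $R$ and $\|f^0\|_{H^4}$.

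The linear viscosity-type term $\mathbf{F}_2$ is the easy one: the mollifier estimate $\|\phi_\delta * g\|_{H^{k+m}}\le C(\delta,m,\phi)\|g\|_{H^k}$ (valid for all $m\ge 0$) gives immediately $\|\mathbf{F}_2[f]\|_{H^4}\le C(\kappa,\delta)\|f\|_{H^4}$ and the analogous Lipschitz bound, since $\mathbf{F}_2$ is linear in $f$. The main work is therefore on $\mathbf{F}_1$. The point is that after regularizing $\ep$ by adding $\kappa>0$ the kernel $K_{\ep+\kappa}(x,y)$ loses the scaling singularity which was the obstruction in the unregularized problem: from the explicit formula of Lemma~\ref{descomposicion} (with $\ep$ replaced by $\ep+\kappa$), $K_{\ep+\kappa}$ is a smooth function of $(x,y,\ep, \Delta f(x,y))$, it is bounded pointwise by $C(\kappa)(1+|\Delta f|)$, and its derivatives with respect to $\Delta f$, $x-y$ and $\ep$ are similarly controlled by $C(\kappa)$ times a polynomial in $|\Delta f|$. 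In particular, for $f\in O$ we have $\|\Delta f/(x-y)\|_{L^\infty}\le C\|f\|_{C^1}$ and the integrand decays at infinity once we observe that $\phi_\delta*\pa_x f\in \mathcal S(\R)$ decays faster than any polynomial (being a convolution with a Schwartz function of an $H^3$ function, one even has $L^2$ decay from $\pa_x f$ together with $C^\infty$ smoothness).

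To estimate $\|\mathbf{F}_1[f,\ep]\|_{H^4}$ we first commute the outer $\phi_\delta*$ past all derivatives, reducing to controlling the $L^2$ norm of up to four $x$-derivatives of $\int(\phi_\delta*\pa_x f(x)-\phi_\delta*\pa_y f(y))K_{\ep+\kappa}(x,y)\,dy$. Every derivative that falls on $\phi_\delta*\pa_x f$ costs a power of $1/\delta$ via the mollification bound; every derivative that falls on $K_{\ep+\kappa}$ via $\Delta f$ costs a smooth function of $\|f\|_{C^4}\le C\|f\|_{H^4}$, which is controlled on $O$. Splitting the $y$-integral into $|x-y|\le 1$ and $|x-y|\ge 1$ and using the bound $|K_{\ep+\kappa}(x,y)|\le C(\kappa)(|x-y|+|x-y||\log|x-y||+|\Delta f|)$ from the explicit expression, one obtains $\|\mathbf{F}_1[f,\ep]\|_{H^4}\le C(\kappa,\delta,R,\|f^0\|_{H^4})$.

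For the Lipschitz estimate I would subtract $\mathbf{F}_1[f,\ep]-\mathbf{F}_1[g,\ep]$, use linearity in $\phi_\delta*\pa_x f-\phi_\delta*\pa_y f$ to pull out $\phi_\delta*\pa_x(f-g)$ in one term, and in the remaining term use $K_{\ep+\kappa}[f]-K_{\ep+\kappa}[g]=(\Delta f-\Delta g)\int_0^1 \pa_{\Delta f} K_{\ep+\kappa}[\tau f+(1-\tau)g]\,d\tau$ together with the smoothness of $K$ in its arguments; this yields $\|\mathbf{F}_1[f,\ep]-\mathbf{F}_1[g,\ep]\|_{H^4}\le C(\kappa,\delta,R,\|f^0\|_{H^4})\|f-g\|_{H^4}$. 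The $\ep$-Lipschitz dependence is similarly handled via $\pa_\ep K_{\ep+\kappa}$, which is bounded on $\ep+\kappa\ge \kappa$. The expected main obstacle is purely bookkeeping: carefully tracking the constants in the kernel derivatives and verifying the $y$-integrability at infinity, since one must be sure that after differentiating in $x$ up to four times no non-integrable growth at $|x-y|\to\infty$ appears. Once the boundedness and Lipschitz properties are in hand, Theorem~\ref{picard} provides a time $T^{\kappa,\delta}>0$ and a solution $f^{\kappa,\delta}\in C([-T^{\kappa,\delta},T^{\kappa,\delta});H^4(\R))$, concluding the proof.
\end{proofsth}
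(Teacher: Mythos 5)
Your proposal follows essentially the same route as the paper: apply Picard's theorem in (a neighbourhood inside) $\R\times H^4$, observing that the kernel $K_{\ep^\kappa+\kappa}$ is no longer singular because $\ep^\kappa+\kappa$ stays bounded away from zero on the chosen open set, and that the mollifiers $\phi_\delta$ make the right-hand side bounded and Lipschitz in $H^4$ — the paper states exactly this and omits the details as standard. One small imprecision: $\phi_\delta*\pa_x f$ is not in $\mathcal S(\R)$ (it is merely smooth with all derivatives in $L^2$), so for the term $\phi_\delta*\pa_x f(x)\int K_{\ep^\kappa+\kappa}(x,y)\,dy$ the convergence of the $y$-integral does not come from decay of the mollified derivative but from the principal value at infinity, i.e.\ the odd leading behaviour $K_{\ep^\kappa+\kappa}(x,y)\sim \frac{1}{x-y}$ plus an $O(|x-y|^{-3})$ remainder, exactly as in the paper's definition of the coefficient $a(x,t)$; the $\phi_\delta*\pa_y f(y)$ term is handled by Cauchy--Schwarz as you indicate.
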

\begin{proof} In order to apply theorem \ref{picard} we choose $B= H^4$,
$$O_{\,M}=\{f\in H^4\,:\,\, ||f||_{H^4}<M\},$$
 $X_0=f^0$ (we take $M>||f^0||_{H^k}$) and
 $$F=-\phi_\delta * \int_{-\infty}^\infty \left(\phi_{\delta}*\pa_x f^{\kappa,\,\delta}(x)- \phi_\delta * \pa_y f^{\kappa,\, \delta}(y)\right)K_{\ep^{\kappa}}(x,y)dy+G_{\kappa}[f^{\kappa,\delta}]+\kappa \phi_\delta*\pa^2_x\phi_\delta* f^{\kappa,\delta}(x).$$
Because the properties of the mollifiers $\phi_\delta$ and  that the kernel $K_{\ep^{\kappa}}$ is not singular in $O_{M}$ ($\ep_\kappa> \frac{\kappa}{2}$, for $T^{\kappa,\delta}<\frac{\kappa}{2}$ in this open set), the hypothesis of theorem \ref{picard} can be verified. In addition we notice that $F$ is also Lipschitz on $t$ thus the solutions can be extended on time as long as its $H^4$-norm is bounded. This is rather standard and we will omit the details.\end{proof}

{\it Proof of Thorem~\ref{existencialocal}}
Once, we dispose of the solutions $ f^{\delta,\kappa}$ we need to obtain estimates independent of $\delta$ and $\kappa$, for positive time, in order to be able extend these solutions to an interval $[0,T)$, with $T$ independent of $\delta$ and $\kappa$. Then, we are entitled to  take the limit. After taking four derivatives in $F$, we find that
\begin{align*}
\pa^4_x F=&\phi_\delta* \left(a(x)\phi_\delta*\pa^{5}_x f(x)\right)+\phi_\delta* \int_{-\infty}^\infty K_{\ep_\kappa}(x,y)\phi_\delta*\pa^{5}_{x}f(y)dy\\&+\kappa \phi_\delta*\pa^2_x\phi_\delta* f^{\kappa,\delta} + l.o.t.,
\end{align*}
where $$a(x)=-P.V.\int_{-\infty}^\infty K_{\ep_\kappa}(x,y)dy,$$
and $l.o.t$ means terms bounded in $H^4$ independently of $\delta$.

Therefore, the main terms in the derivative $\frac{1}{2}\pa_t ||f||_{H^4}$ are
\begin{align*}
&\int_{-\infty}^\infty\phi_\delta* \left(a(x)\phi_\delta*\pa^{5}_x f(x)\right)\pa^4_x f(x)dx\end{align*}
and
\begin{align*}
&\int_{-\infty}^\infty\phi_\delta*\int_{-\infty}^\infty K_{\ep_k}(x,y)\phi_\delta*\pa^{5}_{x}f(y)\,dy\,\pa^{4}_x f(x)dx.
\end{align*}
The first term can be bounded in the following way
\begin{align*}
&\left|\int_{-\infty}^\infty\phi_\delta* \left(a(x)\phi_\delta*\pa^{5}_x f(x)\right)\pa^4_x f(x)dx\right|\\
&=\left|\int_{-\infty}^\infty a(x)\phi_\delta*\pa^{5}_x f(x)\phi_\delta*\pa^4_x f(x)dx\right|
\\&\leq C||\pa_x a||_{L^\infty}||\pa^4_x f||_{L^2}^2.
\end{align*}
And in order to bound the second one we just notice that $K_{\ep_\kappa}$ is not singular because $\ep_\kappa=c(x,t)(t+\kappa)$ and then we can integrate by parts in order to gain a derivative in $x$. Thus, the uniform estimate in $\delta$ are easy to get (the term coming from the Laplacian operator is treated in the usual way). The main difficulty to prove theorem  \ref{existencialocal} is then performing  estimates uniform in $\kappa$ for the equation
\begin{align}\label{fmediok}
\pa_t f^{\kappa}(x)=-\frac{1}{4\pi}\int_{\R}\int_{-1}^{1}\int_{-1}^1\frac{\pa_x f^{\kappa}(x)-\pa_x f^{\kappa}(x-y)+\pa_x\ep_\kappa(x)\lambda-\pa_x\ep_\kappa(x-y)\lambda'}{y^2+\left(\Delta f(x,x-y)+\ep_\kappa(x)\lambda-\ep_\kappa(x-y)\lambda'\right)^2}d\lambda d\lambda'dy+\kappa \pa^2_x f^{\kappa}.
\end{align}
 We notice that because of the effect of the term $\kappa \pa^2_x f^{\kappa}$ the solution to \eqref{fmediok} are actually smooth, and then, we have enough regularity to apply our energy estimates
to obtain estimates uniform in $\kappa$ as in the proof of Proposition~\ref{apriorif}. The only difference is that for the regularized system there is the new term  coming from the Laplacian. Again, this term is harmless as it is a differential and positive operator. Then we have a control of the $H^4-$norm of the solution uniform in $\kappa$. This information is enough to pass to the limit and to find a classical solution for \eqref{zmedio}.

Finally we show the continuity on time of the $H^4$-norm of the solution then $C^1-$continuity in $H^3$ follows directly from the equation. For $t>0$ the proof follows standard techniques. The continuity at $t=0$ is more delicate. This fact follows from the following argument. We can write the difference $\pa^4_x f -\pa^4_x f_0$ as
\begin{align*}
\pa^4_x f -\pa^4_x f_0= \mathcal{D}^{-1}\mathcal{D}\left(\pa^4_x f -\pa^4_x f_0\right)=\mathcal{D}^{-1}(\pa^4_xf-\pa_x^4f_0)+t\mathcal{D}^{-1}(\Lambda\pa_x^4f-\Lambda\pa_x^4f_0).
\end{align*}
The second term is controlled by the energy estimate.

In addition,
\begin{align*}
\mathcal{D}^{-1}\pa^4_x f_0=\pa^4_xf_0- t \mathcal {D}^{-1}\Lambda\pa^4_x f_0.
\end{align*}
Thus, the only problematic term is
\begin{align*}
\frac{\widehat{\pa_x^4 f}}{1+t|\xi|}-\widehat{\pa^4_xf_0}=\int_{0}^t \pa_s \frac{\widehat{\pa^4_x f}(s)}{1+s|\xi|}ds=\int_{0}^t \left( -\frac{|\xi|\widehat{\pa^4_xf}(s)}{(1+s|\xi|)^2}+\frac{\pa_t\widehat{\pa^4_xf}(s)}{1+s|\xi|}\right)ds.
\end{align*}
but $\pa_t\pa_x^4f$ is of the order of $\pa^5_xf$ by the equation.  Then taking the $L^2$ norm, again the energy estimate implies that
\begin{align*}
||\pa_x^4f-\pa_x^4f_0||_{L^2}\leq C t,
\end{align*}
for small $t$.
	
\section{Semiclassical analysis with limited smoothness on the symbols}\label{Semiclassical}
In the following section we develop what we call our semiclassical estimates. As a matter of fact, our symbols are a bit more general than those
of  the type $p(x,t \xi)$  but our results certainly apply to those. We have divided the section into a first part where we state result for general symbols
and a second one where we deal with the ones appearing in the current paper.

\subsection{General symbols}
\subsubsection{Results}
We start by recalling the basic boundedness of pseudodifferential operators with optimal smoothness as proved in \cite{CyM,Cordes,Hwang}. We
state it exactly as \cite[Theorem 1.3]{Hwang} as we will  elaborate on  ideas from this work.
\begin{thm}[I. L. Hwang]\label{Hwan} Let $p \in \Hw$ and $f\in L^2$. Then
\[ \|\Op(p)f\|_{L^2} \le C ||p||_{1,1}\|f\|_{L^2}. \]
\end{thm}

The semiclassical type estimates we need are related to the results for symbols with a limited degree of smoothness studied in \cite{Lannes} and \cite{Texier} via paradifferential calculus. However the estimates in these two papers are not enough for our purposes.

Our first result is on the correlation of symbols (see section \ref{notation}).
\begin{thm} \label{CastroHwangComp} Let $p_1,p_2 \in \Hw \cap S_{2,0}$   and $f \in L^2$.

Then,
\begin{equation}\label{Cestimate}
\|\mathfrak{C}(p_1,p_2) f\|_{L^2} \le  ||| \mathfrak{C}(p_1,p_2)|||\, \| f\|_{L^2},\end{equation}
where
\[ ||| \mathfrak{C}(p_1,p_2)||| \le C   \left(\|p_1\|_{1,1}  \|\pa_\xi p_2\|_{1,0}+ \left(\|p_2\|_{1,1}+\|p_2\|_{2,0} \right) \|\pa_\xi p_1\|_{1,0} \right).\]

\end{thm}

\begin{thm}\label{adjoints} Let $p \in \Hw$  be even in the $\xi$ variable. Let $0<\ep<1$ such that $$\sup_{\xi}\left(\| \pa_x\partial_{\xi} p(\cdot,\xi) \|_{H^{-\ep}}+\|\pa_x \partial_{\xi} p(\cdot,\xi) \|_{H^{1+\ep}}\right)<\infty.$$

 Let $f\in L^2$.

 Then
\[ \|\Op(p)^{skew} f \|_{L^2} \le |||\Op(p)^{skew}|||\, \|f\|_{L^2}, \]
where $$|||\Op(p)^{skew}|||\leq C \sup_{\xi}\left(\| \pa_x\partial_{\xi} p(\cdot,\xi) \|_{\dot{H}^{-\ep}}+\| \pa_x\partial_{\xi} p(\cdot,\xi) \|_{H^{1+\ep}}\right).$$
\end{thm}

\begin{rem} Notice that since in both theorems,  in the estimate of the norms there is multiplying  factors with $\partial_\xi$ in the case of semiclassical symbols $p(x,t\xi)$ our theorems yield a gain a factor of $t$.  The whole semiclassical calculus e.g \cite{Zworski} or \cite{Salo} for more general symbols can be replicated for non smooth symbols. A prime example  is  the coercivity of elliptic semiclassical symbols for $t$ small, which is a corollary of our results.
\end{rem}

Positive symbols have additional properties. The next G\aa rding inequality gives control of them at the price
of bounding the derivatives of $p_+^\frac12$.
\begin{lemma}[G\aa rding inequality]\label{positive}
Let $p_+$ be an even in the $\xi$ variable  positive symbol such that $p_+^\frac{1}{2}\in \mathcal{S}_{1,1}\cap \mathcal{S}_{2,0}$ and $$\sup_{\xi} \left(||\pa_x \pa_\xi p_+^\frac{1}{2}||_{H^{-\ep}}+||\pa_x\pa_\xi p_+^\frac{1}{2}||_{H^{1+\ep}}\right)<\infty,$$ for some $\ep>0$, and $f\in L^2$.  Then
 \[ -\int_{\mathbb{R}} f \Op(p_+)f dx \le  C(||| \mathfrak{C}(p_+^\frac12,p_+^\frac12)|||+ \|p_+^\frac12\|_{1,1} |||\Op(p_+^\frac12)^{skew}|||) \|f\|_{L^2}^2.\]

\end{lemma}

\subsubsection{Proofs}

Our proof are inspired in the ideas of Hwang to prove theorem~\ref{Hwan}.  As usual, in the proofs we obtain the estimates applying the
various operators to functions in the Schwarz class, where we can use  the explicitly representation of the operators as integrals against the symbols, and achieve $L^2$ results by density. Moreover this fact makes it enough  to obtain the correct bounds considering  smooth and fast decaying approximations of the symbols. We will provide some of the details in
the proof of Theorem~\ref{adjoints}, where these arguments are slightly more involved, and skip them in the rest of the theorems.
 Several integration by parts in combination with the basic
properties of the exponential and Plancherel identity are used recurrently. Hence  we have isolated them in some preliminary lemmas.

The first lemma is an extension of  \cite[Lemma 3.1]{Hwang}.
\begin{lemma} \label{basic1} Let $f \in L^2$. We define, for $(y,\eta) \in \mathbb{R}^2$,
\[h_f(y,\eta)=\int_{\R}e^{2\pi i \eta z}\frac{f(z)}{1+2\pi i (y-z)} dz.\]
Then for $k \in 0 \cup \mathbb{N} $,
\[ \|\partial^k_y h\|_{L^2(\mathbb{R}^2)}\le C \|f\|_{L^2}.\]
 Let $p(y,\eta) \in S_{k,0}$  and set
$\Gamma_f(y,\eta)=p(y,\eta) h_f(y,\eta)$. Then,
\[ \|\partial_y^k \Gamma_f\|_{L^2(\mathbb{R}^2)}\le \|p\|_{k,0}  \|f\|_{L^2}.\]
\end{lemma}

\begin{proof}
\cite[Lemma 3.1]{Hwang}, which follows by Plancherel and a change of variable, says that if $g,f \in L^2$
\[h_{f,g}(y,\eta)= \int_{\R}e^{2\pi i \eta z} f(z)g(y-z) dz  \]
satisfies that
\[ \| h\|_{L^2(\mathbb{R}^2)}\le C\|g\|_{L^2} \|f\|_{L^2}.\]
Notice next that for  $k \in 0 \cup \mathbb{N} $ the function  $g=\pa_x^k(\frac{1}{1+2\pi ix}) \in L^2$. Hence
we can differentiate $h_f(y,\eta)$  under the integral sign and the claim follows from \cite[Lemma 3.1]{Hwang}.

The second estimation follows  from the first, the assumptions and  the product rule.
\end{proof}

\begin{lemma}\label{basic2}Let $r,g \in L^2$ and $F \in L^2(\mathbb{R}^2)$ and define, for $(x,\xi) \in \mathbb{R}^2$,
\begin{equation}\label{G1}G(x,\xi)=\int_{\R}\int_{\R} e^{2\pi i (\eta-\xi)y}  F(y,\eta) r(\eta-\xi) g (x-y) dyd\eta.\end{equation}

Then,
\[\|G(x,\xi)\|_{L^2(\mathbb{R}^2)} \le \|r\|_{L^2}\|g\|_{L^2} \|F(y,\eta)\|_{L^2(\mathbb{R}^2)}. \]
\end{lemma}
\begin{proof}
We first take Fourier transform in $x$  and do a change  of variables  to obtain

\[
\hat{G}(\alpha,\xi)=\int_{\R} e^{-2\pi i x\alpha} G(x,\xi) dx= \int_{\R}\int_{\R} e^{2\pi i (\eta-\xi-\alpha )y}  F(y,\eta) r(\eta-\xi) \hat{g} (\alpha) dyd\eta.  \]

Next,   Cauchy-Schwarz inequality respect to  $\eta$ yields  the pointwise estimate,

\begin{equation}\label{pointwiseestimate}|\hat{G}(\alpha ,\xi)|^2 \le \|r\|_{L^2}^2  |\hat{g}(\alpha)|^2 \int_{\R}  \left|\int_{\R}  e^{2\pi i (\eta-\xi-\alpha)y}  F(y,\eta)  dy\right|^2d\eta.
\end{equation}

We will need that   Plancherel identity, with variables $y,\xi$, yields the equality

\begin{equation}\label{plancherel} \int_{\R}  \left|\int_{\R}  e^{2\pi i (\eta-\xi-\alpha)y}  F(y,\eta)  dy\right|^2 d\xi= \int_{\R} |F(y,\eta)|^2 dy. \end{equation}

Thus, we first apply  \eqref{pointwiseestimate} and Plancherel again to bound the $L^2$ norm of $G$,
\[\|G\|_{L^2(\mathbb{R}^2)}^2 \le  \|r\|_{L^2}^2  \int_{\R} |\hat{g}(\alpha)|^2  \int_{\R}  \left|\int_{\R}  e^{2\pi i (\eta-\xi-\alpha)y}  F(y,\eta)  dy\right|^2 d\xi d\alpha,\]
and we conclude by integrating first in $\xi$ and then applying \eqref{plancherel}.  With a final use of Plancherel the lemma is proved.
\end{proof}

\begin{lemma}\label{basic3}
Let $g \in L^2, \Gamma, \partial_y \Gamma \in L^2(\mathbb{R}^2)$ and define, for $(x,\xi) \in \mathbb{R}^2$,
\begin{equation}\label{G2}G(x,\xi)=\int_{\R^2} e^{2\pi i y(\eta-\xi)} \Gamma(\eta,y) g(x-y)d\eta dy.\end{equation}
Then,
\[
\|G\|_{L^2(\mathbb{R}^2)} \le \|g\|_{H^1} \|(1-\pa_y) \Gamma \|_{L^2(\mathbb{R}^2)}.\]

\end{lemma}
\begin{proof}

Let $r(x)=\frac{1}{1+2\pi ix}$. We will  use that
\begin{equation}\label{partes1} e^{2\pi i (\eta-\xi)y}=\frac{1}{1+2\pi i(\eta-\xi)}(1+\pa_y)e^{2\pi i (\eta-\xi)}.
\end{equation}
 We insert \eqref{partes1} into \eqref{G2} and integrate by parts respect to  $y$ to obtain that,

\[\begin{aligned}G(x,\xi)&=\int_{\mathbb{R}^2} e^{2\pi i y(\eta-\xi)}r(\eta-\xi) (1-\pa_y) \big(\Gamma(\eta,y) g(x-y))d\eta dy\\&=\int_{\mathbb{R}^2} e^{2\pi i y(\eta-\xi)} (1-\pa_y)(\Gamma(\eta,y)) r(\eta-\xi) g(x-y)d\eta dy\\&+
\int e^{2\pi i y(\eta-\xi)} r(\eta-\xi) \Gamma(\eta,y) g_2(x-y)d\eta dy,\end{aligned}\]
where $g_2(x)=1+\pa_x g $. Thus both terms are as required in \eqref{G1} and the claim follows from a direct
application of lemma~\eqref{basic2}

\end{proof}

\begin{lemma}\label{trivialbound}
Let $Q(x,\xi),\pa_x Q(x,\xi) \in L^2(\mathbb{R}^2)$. We define, for $a.e.x \in \mathbb{R}$,
\[ A_Q(x)=\int_{\R} e^{2\pi i\xi x} Q(x,\xi) d\xi.\]
Then,
\[ \|A_Q \|_{L^2} \le \| (1-\partial_x) Q \|_{L^2(\mathbb{R}^2)} .\]
\end{lemma}
\begin{proof} Let $v \in C_0^\infty(\mathbb{R})$ be a test function. Then we estimate $\|A_Q \|_{L^2}$  by duality.  Thus for $v \in L^2$,
Fourier inversion formula and the definition of $A_Q$ imply that
\begin{align*} &\int_{\R} A_Q(x) v(x)dx=\int_{\R}\int_{\R} e^{2\pi i\lambda x } A_Q(x) \hat{v}(\lambda)d \lambda dx\\
&=\int_{\R}\int_{\R}\int_{\R} e^{2\pi i(\lambda+\xi) x } Q(x,\xi) \hat{v}(\lambda)d \lambda dx d\xi.\end{align*}

 Now we use \eqref{partes1} and  integrate by parts in $x$ to  get
\[ \int_{\R} A_Q(x) v(x)dx=-\int_{\R}\int_{\R} e^{2\pi i\xi x}(1-\partial_x)Q(x,\xi) h_{\hat{v}(-\lambda)}(\xi,-x)  dxd\xi, \]

by direct application of the definition of $h_{\hat{v}}$ as defined in lemma~\ref{basic1}. A direct application of  Cauchy-Schwarz inequality in $\mathbb{R}^2$ and lemma~\ref{basic1} finishes the proof.
\end{proof}

In our proof we will use lemma~\ref{trivialbound}  for functions defined by integrals, e.g   \[Q(x,\xi)= \int_{\R^2}  e^{2\pi i\xi(x-y)+2\pi i\eta y}\tilde{Q}(x,y,\xi,\eta)  dy d\eta, \]
or \[Q(x,\xi)=\int_{\R^2}  e^{2\pi i\xi(x-y)}\tilde{Q}(x,y,\xi,\eta)  dy d\eta.\]

\paragraph{Proof of Theorem~\ref{CastroHwangComp}}

\begin{proof}

We start by   giving an explicit expression of $\Op(p_1)\circ \Op(p_2)f$,
\begin{align*}
\Op(p_1)\circ \Op(p_2)f=\int_{\R^3}e^{2\pi i(x\xi-\xi y +y\eta)}p_1(x,\xi)p_2(y,\eta)\hat{f}(\eta)d\eta dy d\xi.
\end{align*}
We bring in $p_1p_2$ by adding and subtracting suitable terms,
\begin{align*}
p_1(x,\xi)p_2(y,\eta)=&(p_1(x,\xi)-p_1(x,\eta))p_2(y,\eta)+\\
&p_1(x,\eta)(p_2(y,\eta)-p_2(x,\eta))
+p_1(x,\eta)p_2(x,\eta).\end{align*}
Therefore, we can write
\begin{align*}
\mathfrak{C}(p_1,p_2)f(x)=&\int_{\R^3}e^{2\pi i(x\xi-\xi y +y\eta)}(p_1(x,\xi)-p_1(x,\eta))p_2(y,\eta)\hat{f}(\eta)d\eta dy d\xi\\
+&\int_{\R^3}e^{2\pi i(x\xi-\xi y +y\eta)}p_1(x,\eta)(p_2(y,\eta)-p_2(x,\eta))\hat{f}(\eta)d\eta dy d\xi.
\end{align*}

Notice that the second term is zero (e.g use that as distributions, $\int_{\R} e^{2\pi i(x-y)\xi} d\xi =\delta(x-y)$).

Thus,
\begin{align*}
\mathfrak{C}(p_1,p_2)f(x)=&\int_{\R^3}e^{2\pi i(x\xi-\xi y +y\eta)}(p_1(x,\xi)-p_1(x,\eta))p_2(y,\eta)\hat{f}(\eta)d\eta dy d\xi,
\end{align*}
and we aim to bound it in $L^2$.  We express it directly as an operator  on $f$ itself:
\begin{align*}
\mathfrak{C}(p_1,p_2)f(x)=\int_{\R^4}e^{2\pi i(x\xi-\xi y +y\eta-\eta z)}(p_1(x,\xi)-p_1(x,\eta))p_2(y,\eta)f(z)dz d\eta dy d\xi.
\end{align*}
Now the basic formula (like \eqref{partes1}),
\[\frac{1}{1+2\pi i (y-z)}(1+\pa_\eta) e^{2\pi i (y-z)\eta}=e^{2\pi i (y-z)\eta},\] and an integration by parts in the $\eta$ variable, yields
\begin{align*}
&\mathfrak{C}(p_1,p_2)f(x)\\
&=\int_{\R^4}e^{2\pi i(x\xi-\xi y +y\eta-\eta z)}(1-\pa_\eta)\left\{(p_1(x,\xi)-p_1(x,\eta))p_2(y,\eta)\right\}\frac{f(z)}{1+2\pi i (y-z)}dz d\eta dy d\xi\\
&=\int_{\R^3}e^{2\pi i(x\xi-\xi y +y\eta)}(1-\pa_\eta)\left\{(p_1(x,\xi)-p_1(x,\eta))p_2(y,\eta)\right\} h_f(y,\eta) d\eta dy d\xi,
\end{align*}
where in the last equality we have absorbed the integral respect to  $z$ in the  definition of $h_f$ (Lemma~\ref{basic1}). Now we expand the $\eta$ derivative to express $\mathfrak{C}(p_1,p_2)f$ as a sum of three terms:
\begin{align*}
&=\int_{\R^3}e^{2\pi i(x\xi-\xi y +y\eta)}\left\{(p_1(x,\xi)-p_1(x,\eta))p_2(y,\eta)\right\}
h_f(y,\eta) d\eta dy d\xi\\
&-\int_{\R^3}e^{2\pi i(x\xi-\xi y +y\eta)}p_1(x,\xi)\pa_\eta p_2(y,\eta)h_f(y,\eta) d\eta dy d\xi\\
&+{\int_{\R^3}e^{2\pi i(x\xi-\xi y +y\eta)}\pa_\eta \left( p_1(x,\eta)p_2(y,\eta)\right)h_f(y,\eta) d\eta dy d\xi}\\&\equiv \mathfrak{C}(p_1,p_2)f_1+\mathfrak{C}(p_1,p_2)f_2+\mathfrak{C}(p_1,p_2)f_3.
\end{align*}

Notice that in fact, if we use again that $\int e^{2\pi i\xi(x-y)}d\xi =\delta(x-y)$, we obtain that

\[\mathfrak{C}(p_1,p_2)f_3=\int_{\R}e^{2\pi i\eta x} \pa_\eta\left(p_1(x,\eta))p_2(x,\eta)\right)h_f(x,\eta) d\eta. \]

We treat each of the above terms individually.
\begin{enumerate}
\item \underline{\emph{Estimation for $\mathfrak{C}(p_1,p_2)f_1$:}}

In order to estimate $\mathfrak{C}(p_1,p_2)f_1$ we integrate again by parts to obtain
\begin{align*}
&\mathfrak{C}(p_1,p_2)f_1=\int_{\R^3}e^{2\pi i(x\xi-\xi y +y\eta)}(1-\pa_\xi)\left\{(p_1(x,\xi)-p_1(x,\eta))p_2(y,\eta)\right\}\\
&\qquad\qquad\times \frac{h_f(y,\eta)}{(1+2\pi i (x-y))}dz d\eta dy d\xi\\
&=\int_{\R^3}e^{2\pi i(x\xi-\xi y +y\eta)}(p_1(x,\xi)-p_1(x,\eta))p_2(y,\eta) \frac{h_f(\eta,y))}{(1+2\pi i (x-y))}dz d\eta dy d\xi\\
&-\int_{\R^3}e^{2\pi i(x\xi-\xi y +y\eta)}\pa_\xi p_1(x,\xi)p_2(y,\eta)  \frac{h_f(\eta,y)}{(1+2\pi i (x-y))}dz d\eta dy d\xi\\
&\equiv \mathfrak{C}(p_1,p_2)f_{11}+\mathfrak{C}(p_1,p_2)f_{12}.
\end{align*}

\begin{enumerate}
\item \underline{\emph{Estimation for $\mathfrak{C}(p_1,p_2)f_{11}$:}}

We start by  we integrate by parts with respect to $y$ to bring a factor $\frac{1}{\eta-\xi}$ and thus a difference quotient for $p_1$;
\begin{align*}
&\mathfrak{C}(p_1,p_2)f_{11}(x)=\int_{\R^3}\frac{-1}{2\pi i (\xi-\eta)}\pa_y e^{2\pi i (x\xi-\xi y+y\eta)}
\frac{\left(p_1(x,\xi)-p_1(x,\eta)\right)\Gamma_f(y,\eta) }{1+2\pi i (x-y)} dy d\eta d\xi\\
&=\int_{\R^3}e^{2\pi i (x\xi-\xi y+y\eta)}Q(\xi,\eta,x)\pa_y\left(\frac{\Gamma_f(y,\eta)}{1+2\pi i (x-y)}\right)d\eta dy d\xi,
\end{align*}
where
\begin{equation*}
\Gamma_f(y,\eta)\equiv p_2(\eta,y)h_f(y,\eta), \qquad
Q(\xi,\eta,x)\equiv  \frac{\left(p_1(x,\xi)-p_1(x,\eta)\right)}{2\pi i (\xi-\eta)}.
\end{equation*}

The mean value theorem respect to $\xi$, tells us that
\begin{equation}\label{Qestimate}
\begin{aligned}
&\|Q|\|_{L^\infty(\mathbb{R}^3)}\le \|\partial_{\xi} p_1\|_{L^\infty(\mathbb{R}^2)} \le   \|\partial_{\xi} p_1\|_{1,0}\\
& ||\partial_xQ||_{L^\infty(\mathbb{R}^3)}\le \|\partial^2_{\xi\,x} p_1\|_{L^\infty(\mathbb{R}^2)} \le \|\partial_{\xi} p_1\|_{1,0}.
\end{aligned}
\end{equation}

Now a direct application of lemma~\ref{trivialbound} yields that

\[ \|\mathfrak{C}(p_1,p_2)f_{11}\|_{L^2} \le \|G(x,\xi)\|_{L^2(\mathbb{R}^2)} \]
where,
\begin{align*}
G(\xi;x)=\int_{\R^2}e^{2\pi i(\eta-\xi)y} (1-\pa_x)\left(Q(\xi,\eta,x)\pa_y\left(\frac{\Gamma_f(y,\eta)}{1+2\pi i (x-y)}\right)\right)d\eta dy .\end{align*}

Here we can not directly apply lemma~\ref{basic2} as $Q$ depends on $\eta$ but we follow a similar strategy.
We integrate by parts in $y$ to obtain that

\begin{align*}
G(\xi;x)=\int_{\R^2}e^{2\pi i(\eta-\xi)y} \frac{1}{1+2\pi i(\xi-\eta)}(1-\pa_x)\left(Q(\xi,\eta,x)(1-\pa_y)\pa_y\left(\frac{\Gamma_f(y,\eta)}{1+2\pi i (x-y)}\right)\right)d\eta dy.\end{align*}

Let us write $G(\xi;x)$ in the following way
\begin{align*}
G(\xi;x)=\int_{\R}\frac{1}{1+2\pi i (\eta-\xi)}Q^\sharp(\xi,\eta,x) d\eta,
\end{align*}
with
\begin{align*}
Q^\sharp(\xi,\eta,x)=\int_{\R}e^{2\pi i(\eta-\xi)y} (1-\pa_x)\left(Q(\xi,\eta,x)(1-\pa_y)\pa_y\left(\frac{\Gamma_f(y,\eta)}{1+2\pi i (x-y)}\right)\right)dy,
\end{align*}
By Cauchy-Schwarz
\begin{align*}
|G(\xi;x)|^2\leq C \int_{\R}|Q^\sharp(\xi,\eta,x)|^2 d\eta,
\end{align*}
and therefore,
\begin{align*}
||G||^2_{L^2(\mathbb{R}^2)}\leq C\int_{\R^3} |Q^\sharp (\xi,\eta,x)|^2d\eta d\xi dx.
\end{align*}
Our next task is to deal with  $Q^\sharp (\xi,\eta,x)$. We first expand the derivatives in $x$. Notice that
\begin{align*}
&(1-\pa_x)\left(Q(\xi,\eta,x)(1-\pa_y) \pa_y\left(\frac{\Gamma_f(\eta,y)}{1+2\pi i (x-y)}\right)\right)\\&=Q(\xi,\eta,x)(1-\pa_y)\pa_y\left(\Gamma_{f}(y,\eta)(1-\pa_x)\left(\frac{1}{1+2\pi i (x-y)}\right)\right)\\
&-\pa_x Q(\xi,\eta,x)(1-\pa_y)\pa_y\left(\frac{\Gamma_f(y,\eta)}{1+2\pi i (x-y)}\right).
\end{align*}
Then
\begin{align*}
&\int_{\R^3}|Q^\sharp(\xi,\eta,x)|^2d\eta d\xi dx\\
& = \int_{\R^3}\left|\int_{\R}e^{2\pi i (\eta-\xi)y}(1-\pa_x)\left(Q(\xi,\eta,x)(1-\pa_y)\pa_y\left(\frac{\Gamma_f(\eta,y)}{1+2\pi i (x-y)}\right)\right)dy\right|^2d\xi d\eta dx\\
&\leq ||Q||_{L^\infty(\mathbb{R}^3)}\\ &\int_{\R^3}\left|\int_{\mathbb{R}}e^{2\pi i (\eta-\xi)y}(1-\pa_y)\pa_y\left(\Gamma_f(y,\eta)(1-\pa_x)\left(\frac{1}{1+2\pi i (x-y)}\right)\right)dy\right|^2d\eta d\xi dx\\
&+||\pa_xQ||_{L^\infty(\mathbb{R}^3)}\int_{\R^3}\left|\int_{\R}e^{2\pi i (\eta-\xi)y}(1-\pa_y)\pa_y\left(\frac{\Gamma_f(y,\eta)}{1+2\pi i (x-y)}\right)dy\right|^2d\eta d\xi dx\\
&\equiv ||Q||_{L^\infty(\mathbb{R}^3)}I_1+||\pa_xQ||_{L^\infty(\mathbb{R}^3)}I_2.
\end{align*}
Now we expand the derivatives in $y$. We  obtain that both $I_1,I_2$ are a sum of terms of the type

\begin{equation*}
I_i=\int_{\R^3}\left|\int_{\R}e^{2\pi i (\eta-\xi)y}\pa^j_y {\Gamma_f(y,\eta)}g_i(x-y) dy\right|^2d\eta d\xi dx,
\end{equation*}
with $g_i \in L^2$ and $j=0,1,2$.
We proceed as in the proof of lemma~\ref{basic2}.  We first do  Plancherel in the $x$ variable and  then Fubini to integrate first
respect to $\xi$ and conclude by Plancherel again with real variable $y$ and Fourier variable $\xi$.
\[ \begin{aligned} I_i&= \int_{\R^3}|\hat{g}_i)(\alpha)|^2\left|\int_{\R}e^{2\pi i (\eta-\xi)y}\pa^j_y {\Gamma_f(y,\eta)} dy\right|^2 d\xi d\alpha d\eta\\&= \|g_i\|_{L^2}
\int  |\int e^{2 \pi i \xi y } \pa^j_y {\Gamma_f(y,\eta)}dy|^2 d\xi d\eta\\ &= \|g_i\|_{L^2} \int  |\pa^j_y {\Gamma_f(y,\eta)}|^2 dyd\eta \le
C \|p_2\|^2_{2,0} \|f\|^2_{L^2}, \end{aligned} \]
where the last inequality follows from a direct use of  of lemma~\ref{basic1} and the uniform bound for $\|g_i\|_{L^2}$.
Combined with \eqref{Qestimate} yields the desired bound,

\begin{equation} \label{bound1} \|G\|_{L^2(\mathbb{R}^2)} \le C \|p_2\|_{2,0} \|\partial_\xi p_1\|_{1,0} \|f\|_{L^2}.
\end{equation}

\item \underline{\emph{Estimation for $\mathfrak{C}(p_1,p_2)f_{12}$:}}

The estimate goes in a similar way to the previous one.  However we already have a derivative of the symbol,
so the first integration by part is not necessary. By applying   lemma~\ref{trivialbound} it holds that,

\[ \|\mathfrak{C}(p_1,p_2)f_{12}\|_{L^2} \le \|G(x,\xi)\|_{L^2 (\mathbb{R}^2)}. \]

where

\[G(x,\xi)= \int_{\R^3}e^{2\pi i y(\eta-\xi))}p_2(y,\eta)h_f(\eta,y) (1-\pa_x) \frac{\pa_\xi p_1(x,\xi)}{1+2\pi i (x-y)}  d\eta dy.  \]
Thus, we have to control terms of the form

\[ G_i(\xi,x)=  q_i(x,\xi)\int_{\R^2}e^{2\pi i(\eta-\xi)y}  \Gamma_f(y,\eta) g_i(x-y)d \eta dy, \]
where either $q_i(x,\xi)= \pa_{\xi} p_1,$ or $q_i=\pa_{x} (\pa_\xi p_1)$, and thus  $\|q_i\|_{L^\infty(\mathbb{R}^2)} \le \|\pa_\xi p_1\|_{1,0}$,
 $\|g_i\|_{L^2}$ is uniformly bounded and $\Gamma_f=p_2 h_f$.

Hence a direct application of lemma~\ref{basic3} yields that

\[ \|G_i(x,\xi)\|_{L^2(\mathbb{R}^2)} \le \|\pa_\xi p_1\|_{1,0} \| (1-\pa_y) \Gamma_f(\eta,y)\|_{L^2(\mathbb{R}^2)}. \]

Now $\Gamma_f$ is exactly as in the   lemma~\ref{basic1}.Thus

\begin{equation}\label{bound2} \|\mathfrak{C}(p_1,p_2)f_{12}\|_{L^2} \le C  \|\partial_\xi p_1\|_{1,0} \|p_2\|_{2,0} \|f\|_{L^2}. \end{equation}

 This finishes the estimate for $\mathfrak{C}(p_1,p_2)f_{12}$ and hence that of  $\mathfrak{C}(p_1,p_2)f_{1}$.

\end{enumerate}

\item \underline{\emph{Estimation of $\mathfrak{C}(p_1,p_2)f_{2}$:}}

In order to bound $\mathfrak{C}(p_1,p_2)f_{2}$ in $L^2$ we start by integrating by parts in $\xi$,

\begin{align*}
&\mathfrak{C}(p_1,p_2)f_{2}=\int_{\R^3}e^{2\pi i (x\xi-\xi y+y\eta)}p_1(x,\xi)\pa_\eta p_2(y,\eta)h_f(y,\eta) dy d\eta d\xi\\
&=\int_{\R^3}e^{2\pi i (x\xi-\xi y+y\eta)}(1-\pa_\xi)p_1(x,\xi)\pa_\eta p_2(y,\eta)\frac{h_f(y,\eta)}{1+2\pi i(x-y)} dy d\eta d\xi.
\end{align*}
 By lemma~\ref{trivialbound} we are  led to estimate in $L^2(\mathbb{R}^2)$ the function

\[G(x,\xi)= \int_{\R^2}e^{2\pi i y(\eta-\xi))}\pa_\eta p_2(y,\eta) h_f(y,\eta)(1-\pa_x)\frac{(1-\pa_\xi)p_1(x,\xi)}{1+2\pi i(x-y)}\big) dy d\eta. \]

Expanding the derivatives in $x$ and $\xi$ we discover that $G$ is a
 a sum of terms of the type

\[G_i(x,\xi)= q_i(x,\xi)  \int_{\R^2}e^{2\pi i y(\eta-\xi))} \Gamma_f(\eta,y) g_i(x-y) dyd\eta. \]
Here $g_i \in L^2$ uniformly,  $\Gamma_f=\pa_\eta p_2h_f$ and $q_i=\pa_{x,\xi}^{\alpha,\beta} p_1$ with $\alpha,\beta=0,1$. Thus
we have the uniform bound $\|q_i\|_{L^\infty} \le \|p_1\|_{1,1}$ as well.  Hence, a
direct application of lemma~\ref{basic3} yields the bound

\[ \|G_i(x,\xi)\|_{L^2(\mathbb{R}^2} \le  \|p_1\|_{1,1} \|(1-\pa_y) \Gamma_f \|_{L^2(\mathbb{R}^2)} . \]

Therefore lemma~\ref{basic1} (with $k=0,1$) applied to $\pa_\eta p_2$  yields

\begin{equation} \label{bound3} \|\mathfrak{C}(p_1,p_2)f_{2}\|_{L^2}  \le \ C ||p_1\|_{1,1} \| \pa_\xi p_2 \|_{1,0} \|f\|_{L^2}. \end{equation}

\item \underline{\emph{Estimation for $\mathfrak{C}(p_1,p_2)f_3$:}}

We denote  $M(x,\eta)=\pa_\eta (p_1(x,\eta) p_2(x,\eta))$ and $\tilde{M}(x,\eta)=(1-\partial_x)M(x,\eta)$.  Notice that,
by expanding the various derivatives, it holds that
\begin{equation}
\| \tilde{M}\|_{L^\infty(\mathbb{R}^2)} \le  \|\pa_\xi p_1 \|_{1,0} \|p_2\|_{1,1}+\|\pa_\xi p_2 \|_{1,0} \|p_1\|_{1,1}.
\end{equation}
Then

\[\mathfrak{C}(p_1,p_2)f_3(x)=\int_{\R} e^{2\pi i \eta x} M(x,\eta) h_f(x,\eta) d\eta.\]

Lemmas~\ref{trivialbound} and  \ref{basic1} gives

\begin{equation}\label{bound4}\begin{aligned} \|\mathfrak{C}(p_1,p_2)f_3\|_{L^2} & \le \| \tilde{M} h_f \|_{L^2(\mathbb{R}^2)} \le \|\tilde{M}\|_{L^\infty} \|f\|_{L^2}\\ & \le C  (\|\pa_\xi p_1 \|_{1,0} \|p_2\|_{1,1}+\|\pa_\xi p_2 \|_{1,0} \|p_1\|_{1,1})\|f\|_{L^2}.
 \end{aligned}
 \end{equation}

Finally,  by combining the bounds  \eqref{bound1},\eqref{bound2},\eqref{bound3}, \eqref{bound4}
we have achieved the conclusion of Theorem~\ref{CastroHwangComp} with norm,

\begin{equation}\label{constante}
\begin{aligned}
 ||| \mathfrak{C}(p_1,p_2)|||= & C( 2\|\partial_\xi p_1\|_{1,0} \|p_2\|_{2,0}+  \|p_1\|_{1,1} \| \pa_\xi p_2 \|_{1,0}\\ & +  \|\pa_\xi p_1 \|_{1,0} \|p_2\|_{1,1}+\|\pa_\xi p_2 \|_{1,0} \|p_1\|_{1,1}).
 \end{aligned}
 \end{equation}
  Thus, $||| \mathfrak{C}(p_1,p_2)||| \le C \left(\|p_1\|_{1,1}  \|\pa_\xi p_2\|_{1,0}+ \left(\|p_2\|_{1,1}+\|p_2\|_{2,0} \right) \|\pa_\xi p_1\|_{1,0} \right)$ as claimed.

\end{enumerate}

\end{proof}

\paragraph{ Proof of theorem~\ref{adjoints}}

\begin{proof}
Since $p(x,\xi)=p(x,-\xi)$ for $f,g \in \mathcal{S}$, the Schwarz class, it holds that

\[ \int_\R \Op(p)^{skew}f (x)g(x)  dx=\int_{\R^2} e^{2\pi i(x-y)\xi} \left(p(x,\xi)-p(y,\xi)\right) f(y) dyd\xi dx. \]
We  consider  the following smooth and fastly decaying approximation of the symbol,
$$p^{\delta,\kappa}(x,\xi)=e^{-\delta\xi^2} e^{-\kappa x^2} \varphi_\kappa * p(x,\xi) \in H^{k}$$
for every $k \in 0 \cup \mathbb{N}$. Here $\varphi_k$ is an standard approximation of the identity in the $x$ variable.
Since f,g are in the Schwarz class, by Dominated Convergence Theorem we have that,

\begin{equation}\label{aproximation} \lim_{\kappa \searrow 0, \delta \searrow 0}  \int_\R \Op(p^{\delta,\kappa})^{skew}f(x)g(x) dx=\int_\R \Op(p)^{skew}f(x) g(x) dx. \end{equation}

Therefore, we can  integrate by parts in $\xi$ to obtain,

\[\Op(p^{\delta,\kappa})^{skew}f(x)=\int_{\R^2} e^{2\pi i(x-y)\xi} \frac{\partial_\xi (p^{\delta,\kappa}(x,\xi)-p^{\delta,\kappa}(y,\xi) )}{2\pi i(x-y)} f(y) dyd\xi
=
\int_{\R^2}  e^{2\pi i(x-y)\xi} Q(x,y,\xi)  f(y)dyd\xi,\]

where $$Q(x,y,\xi)=\frac{\partial_\xi (p^{\delta,\kappa}(x,\xi)-p^{\delta,\kappa}(y,\xi) )}{2\pi i(x-y)}.$$
Thus, by Lemma~\ref{trivialbound},

\begin{equation}\label{Skewtrivial} \|\Op(p^{\delta,\kappa})^{skew}f \|_{L^2} \le \|G\|_{L^2(\mathbb{R}^2)}, \end{equation}

where
$$G(x,\xi)=\int_{\R}e^{-2\pi i y\xi} (1-\pa_x)Q(x,y,\xi)f(y)dy.$$

Now for $2\pi iq^{\delta,\kappa}(x,\xi)=\partial_\xi p^{\delta,\kappa}(x,\xi)$, the basic properties of the Fourier transform yield that,

\[ Q(x,y,\xi)=\frac{q^{\delta,\kappa}(x,\xi)-q^{\delta,\kappa}(y,\xi)}{ (x-y)}=\int \widehat{\partial_x q^{\delta,\kappa}}(\eta,\xi) \frac{e^{i2\pi\eta x}-e^{i2\pi\eta y}}{2\pi i \eta(x-y)}d\eta .\]

Thus, if we declare $\psi(\eta,x-y)= (1-\partial_x)\frac{e^{i2\pi\eta (x-y)}-1}{2\pi i \eta(x-y)} $, it holds that

\[G(x,\xi)=\int_{\R^2} e^{2\pi i(-y\xi+\eta y)} \psi(\eta,x-y) \widehat{\partial_x q^{\delta,\kappa}}(\eta,\xi)f(y)dyd\eta.\]

Next, we compute the Fourier transform of  $G(x,\xi)$ respect to $x$,  denoted  by $\widehat{G}(\alpha,\xi)$, and change variables in $x-y$. We obtain
the formula,

\[ \begin{aligned}
\widehat{G}(\alpha,\xi)&=\int_{\R^2} e^{i y (\eta-\xi-\alpha)} \widehat{\psi}(\eta,\alpha)  \widehat{\partial_x q^{\delta,\kappa}}(\eta,\xi)f(y) dyd\eta\\
& =\int_{\mathbb{R}} \frac{b(\eta)}{b(\eta)} \widehat{\psi}(\eta,\alpha) \widehat{\partial_x q^{\delta,\kappa}}(\eta,\xi) \widehat{f}(-\eta+\xi+\alpha) d \eta, \end{aligned} \]

where $b(\eta)$ is an auxiliary function, which will be specified later, introduced to bargain differentiability into integrability,  Now Cauchy Schwarz  yields the pointwise estimate,

\[ |\widehat{G}(\alpha,\xi)| \le \left(\int_{\mathbb{R}} |b(\eta) \widehat{\partial_x q^{\delta,\kappa}}(\eta,\xi)|^2 d\eta\right)^{\frac12} \left(\int_{\mathbb{R}} \frac{1}{b(\eta)} |\widehat{\psi}(\eta,\alpha)|^2
| \widehat{f}(\eta-\xi-\alpha)|^2 d \eta\right)^\frac12. \]

Thus, for $C(p)=\sup_{\xi}  \int_{\mathbb{R}} |b(\eta) \widehat{\partial_x q^{\delta,\kappa}}(\eta,\xi)|^2 d\eta$, it holds that
 \[\begin{aligned}
 \|G\|_{L^2(\mathbb{R}^2)}^2  &\le C(p)  \int_{\mathbb{R}^3} \frac{1}{|b(\eta)|^2} |\widehat{\psi}(\eta,\alpha)|^2
| \widehat{f}(\eta-\xi-\alpha)|^2 d \eta d\xi d\alpha \\&=C(p)\|f\|_{L^2}^2  \int_{\mathbb{R}^2} \frac{1}{|b(\eta)|^2} |\widehat{\psi}(\eta,\alpha)|^2
 d \eta d\alpha =C(p)\|f\|_{L^2}^2  \int_{\mathbb{R}^2} \frac{1}{|b(\eta)|^2} |{\psi}(x,\eta)|^2
 d \eta dx. \end{aligned}\]

 Now, since  $\int_{\mathbb{R}} \left|\frac{e^{ix}-1}{x}\right|^2+\left|\pa_x \left(\frac{e^{ix}-1}{x}\right)\right|^2 dx  \le C$, it holds that
 \[ \int_{\mathbb{R}} |\psi(x,\eta)|^2dx \le C \left( |\eta|+|\eta|^{-1}\right).\]
Therefore, by Fubini,
 \[\int_{\mathbb{R}^2} \frac{1}{|b(\eta)|^2} |{\psi_2}(x,\eta)|^2
 d \eta dx \le C \int_{\mathbb{R}} \frac{|\eta|+|\eta|^{-1} }{|b(\eta)|^2} d\eta. \]
 This last expression,  is integrable for every $0<\ep<1$ if we take $b(\eta)=\eta^{-\ep} \chi_{[0,1]}(|\eta|)+(1-\chi_{[0,1]}. (|\eta|)) \eta^{1+\ep}.$

Hence  inserting the bound of $\|G\|_{L^2}$ in \eqref{Skewtrivial} we obtain that

\[ \|\Op(p^{\delta,\kappa})^{skew}f \|_{L^2} \le C \|f\|_{L^2}, \]

with $C=C(p^{\delta,\kappa} )=\sup_{\xi}  \int_{\mathbb{R}} |b(\eta) \widehat{\partial_x q^{\delta,\kappa} (\eta,\xi)}|^2 d\eta$. Given our choice of $b(\eta)$, it holds
that,

\[ C(p) \le \sup_\xi \left(\|\pa_x q^{\delta,\kappa}\|_{H^{1+\ep}}+\|\pa_x q^{\delta,\kappa}\|_{\dot{H}^{-\ep}}\right)=\sup_{\xi} \left(\|\partial^2_{x\xi} p^{\delta,\kappa} \|_{H^{1+\ep}}+\|\partial^2_{x\xi} p^{\delta,\kappa} \|_{\dot{H}^{-\ep}}\right).\]
However,  setting $p^{\kappa}= e^{-\kappa x^2} \varphi_k * p$, it holds that
\[\sup_{\xi} \|\pa_x q^{\delta,\kappa}\|_{\dot{H}^{-\ep}}=\sup_{\xi} \left(2\delta|\xi|e^{-\delta \xi^2}\|\pa_x p^\kappa\|_{\dot{H}^{-\ep}}\right)+\sup_{\xi}\left( e^{-\delta \xi^2}\|\pa_x \pa_\xi p^{\kappa}\|_{\dot{H}^{-\ep}}\right). \]

Notice that  $p^{\kappa} \in L^2$ implies that  $\pa_x p^{\kappa}  \in \dot{H}^{-\ep} $. Hence we can take first the limit $\delta \searrow 0$ to get
rid of the term $\|\pa_x p^\kappa\|$.  Then, continuity of the Sobolev norms respect to mollifiers allows us  to let $\kappa$  go to  $0$, to obtain the  bound
$\sup_{\xi}\|\pa_x \pa_\xi  p\|_{\dot{H}^{-\ep}}$

Arguing exactly in the same way with  the $ \dot{H}^{1+\ep}-$ term, in combination with \eqref{aproximation} yields the desired,

\begin{align*}
\|Op(p)^{skew} \|_{L^2 \to L^2}\leq \sup_{\xi}\left(\|\pa_x \pa_\xi p \|_{\dot{H}^{-\ep}}+\|\pa_x \pa_\xi p\|_{H^{1+\ep}}\right).
\end{align*}
The proof is finished.
\end{proof}
\paragraph{Proof of lemma~\ref{positive}}
\begin{proof}
\[\begin{aligned}
-\int_{\mathbb{R}} f\Op(p_+)fdx&= -\int_{\mathbb{R}} f\Op(p_+^\frac12 p_+^\frac12)fdx\\&=-\int_{\mathbb{R}} f\Op(p_+^\frac12)\circ \Op(p_+^\frac12)fdx
-\int_{\mathbb{R}} fC(p_+^\frac12,p_+^\frac12)fdx.
\end{aligned}\]
Cauchy-Schwarz and Theorem~\ref{CastroHwangComp} imply that
\begin{equation}
| \int_{\mathbb{R}} f\mathfrak{C}(p_+^\frac12,p_+^\frac12)fdx| \le |||\mathfrak{C}(p_+^\frac{1}{2},p_+^\frac{1}{2}) |||\|f\|_{L^2}^2.
\end{equation}
For the first, notice that $$\int \Op(p_+^\frac12)f \Op(p_+^\frac12)f dx=\int |\Op(p_+^\frac12)f|^2dx>0.$$ Thus,
\[ \begin{aligned}-\int f \Op(p_+^\frac12)\circ \Op(p_+^\frac12)fdx&=
-\int \Op(p_+^\frac12)^Tf \Op(p_+^\frac12) f dx \\ &\le -\int \Op(p_+^\frac12)^Tf\Op(p_+^\frac12) f dx+\int (\Op(p_+^\frac12)f\Op(p_+^\frac12) fdx
\\ & = \int  [\Op(p_+^\frac12)^Tf-\Op(p_+^\frac12) f]\Op(p_+^\frac12) fdx \\ &\le\| \Op(p_+^\frac12) (f)\|_{L^2}
\|\Op(p_+^\frac12)^{skew} f\|_{L^2}. \end{aligned}
\]
The claim follows from  theorem~\ref{adjoints} and theorem~\ref{Hwan}.

\end{proof}

\subsection{Lemmas for the apriori estimate}

\subsubsection{Transport term}

Recall that $D$ is the operator associated with the symbol $d(\xi)=1+2\pi i t|\xi|$ and $\mathcal{J}$ is associated
with $j(\xi)=e^{\int_0^{t|\xi|}\varphi(\tau)d\tau}\hat{f}(\xi)$ where $\varphi$ was defined in \eqref{varphi}.

In order to deal with the transport term we need the following lemma which states that $\mathcal{J}$  and $D$
have a similar behaviour.

\begin{lemma}\label{ftoc} There exists $\phi$ such that
\[j(t|\xi|)=1+t |\xi|\phi(t|\xi|)\]
\end{lemma}
satisfying
\begin{align*}
||\phi(t|\xi|)||_{L^\infty}\leq C,\quad
|||\xi|\pa_{\xi}\left(\phi(t|\xi|)\right)||_{L^\infty}\leq C,
\end{align*}
where $C$ does not depend on $t$.
\begin{proof}
Notice that by the fundamental theorem of Calculus
\[ j(t|\xi|)=1+t |\xi| \int_0^1 \varphi(st|\xi|)e^{\int_{0}^{st|\xi|}\varphi(\tau)d\tau}ds. \]

Thus,
\begin{align*}
\phi(t|\xi|)=\int_{0}^1\varphi(st|\xi|)e^{\int_{0}^{st|\xi|}\varphi(\tau)d\tau}ds.
\end{align*}
With these representations the claimed properties follow readily.

\end{proof}

\begin{lemma}\label{transport}Let $\pa_{x}a \in H^{1+\ep}(\mathbb{R})$ for $\epsilon>0$ and
$\mathcal{J}^{-1}(f) \in L^2$. Then,
\begin{equation} |\int \mathcal{J}^{-1}f \mathcal{J}^{-1}(af_x) dx| \le C||\pa_x a ||_{H^{1+\ep}} \|\mathcal{J}^{-1}(f)\|_{L^2}^2.\end{equation}
\end{lemma}

\begin{proof}

In order to bring in a suitable commutator we
first  notice that
\begin{equation}\label{cuadrado} \int_{\mathbb{R}}  \mathcal{J}^{-1}(f) a\pa_x\mathcal{J}^{-1}(f) dx= \frac12 \int_{\mathbb{R}}   \int a\pa_x|\mathcal{J}^{-1}(f)|^2 dx \end{equation}
and thus, integrating by parts
\begin{equation}\label{easy} |\int_{\mathbb{R}}   \mathcal{J}^{-1}f  a \pa_x\mathcal{J}^{-1} f dx| \le \|a_x\|_{L^\infty} \|\mathcal{J}^{-1}(f)\|^2_{L^2} \end{equation}
and since $ \|a_x\|_{L^\infty} \le \|a_x\|_{H^{1+\epsilon}}$, we conclude that $\int  \mathcal{J}^{-1}f a\pa_x\mathcal{J}^{-1}f dx$ is a harmless term. Thus we can subtract it to the  transport term and  we are led to bound the commutator,
\[  [\mathcal{J}^{-1},a][f_x]. \]

Let $g=\mathcal{J}^{-1}f$ so that $f=\mathcal{J}g$. Let $\psi=\phi(t|\xi|)|\xi|$. Then

\[ a f_x= a \mathcal{J}g_x= ag_x+t \Psi(ag_x)+ t [a,\Psi] (g_x)=\mathcal{J} (ag_x)+t [a,\Psi] (g_x), \]

\[ [\mathcal{J}^{-1},a]f_x= ag_x-\mathcal{J}^{-1}(af_x)= t \mathcal{J}^{-1}[a,\Psi](g_x). \]

We iterate this   trick once more. Let  us denote $G=t\mathcal{J}^{-1}g_x$, which has  $L^2$-norm bounded by  $||g||_{L^2}$. Then
\[[\mathcal{J}^{-1},a]f_x= [a,\Psi] t g_x= [a,\Psi] G+ t\Psi ([a,\Psi])(G)+t[[a,\Psi],\Psi] (G)= \mathcal{J}([a,\Psi] G)+t [[a,\Psi],\Psi] (G) \]
and therefore

\[ t \mathcal{J}^{-1}[a,\Psi](g_x)= [a,\Psi] G+t \mathcal{J}^{-1}[[a,\Psi],\Psi] (G). \]

Now notice that
\[ [[a,\Psi],\Psi]= a(\Psi)^2-2\Psi(a\Psi)+(\Psi)^2 a \]
and then
\begin{align*}& \widehat{[a,\Psi]}G(\xi)= \int \hat{a}(\eta)\hat{G}(\xi-\eta)
\left(|\xi|\phi(t|\xi|)-|\xi-\eta|\phi(t|\xi-\eta|)\right) d\eta\\
&\widehat{[[a,\Psi],\Psi]}G(\xi)=\int \hat{a}(\eta)\hat{G}(\xi-\eta)
\left(|\xi|\phi(t|\xi|)-|\xi-\eta|\phi(t|\xi-\eta|)\right)^2 d\eta.\end{align*}

Thus by the mean value theorem and  lemma \ref{ftoc}
\[ \left||\xi|\phi(t|\xi|)-|\xi-\eta|\phi(t|\xi-\eta|)\right| \le C |\eta|,  \]
Thus
\[ \|[a, \Psi ]G\|_{L^2} + \|[[a, \Psi ],\Psi]G\|_{L^2} \le \int_{\mathbb{R}}| \int_{\mathbb{R}} |\hat{G}(\xi-\eta)| (|\widehat{\pa_{x} a}(\eta)|+|\widehat{\pa_{xx} a}(\eta)|) (1+|\eta|)^\ep(1+|\eta|)^{-\ep} d\eta |^2d\xi \]
and we conclude by H\"older inequality in the $\eta$ variable (Recall that$\|G\|_{L^2} \le \|g \|_{L^2}$).

 \end{proof}

\subsubsection{Commutator  between $\Op(p)$ and $\mathcal{J}$}

Similar computations to the above allow us to interchange $\mathcal{J}$ and $\Op(p)$. In order to simplify  the proof
we will first relate $D$ with $\Op(p)$. Then we  use our commutator estimation theorem~\ref{CastroHwangComp} to transfer the result to $\mathcal{J}$ to finish the estimate.

\begin{lemma}\label{ComDp}
Let $t \pa_x p \in \Hw$ and $g \in L^2$. Then,

\[ \|D^{-1}\Op(p) D g-\Op(p)g \|_{L^2} \le \|t \pa_x p\|_{1,1}\|g\|_{L^2}. \]

\end{lemma}
\begin{proof}
By the definition of $D$,

\[ \begin{aligned}  \Op(p) (Dg)&=\Op(p)(g)+t \Op(p)(\pa_x g)\\ &=D(\Op(p)(g))+t[\pa_x,\Op(p)](g)=D(\Op(p)(g))+t\Op(\pa_x p)(g). \end{aligned}\]
Hence
\[ D^{-1}\Op(p) D {g}-\Op(p)g = D^{-1}(\Op(t \pa_x p) (g)).  \]

Thus, taking $L^2$ norms and using theorem~\ref{Hwan} for the symbol $t\pa_x p$, and
that  $d^{-1}$ is bounded in $L^\infty$ the claim is straightforward.

\end{proof}

\begin{thm}\label{ComJp} Suppose that $t\pa_x p,\, tp \in \Hw$ and $\partial_\xi p \in S_{1,0}$. Let  $g \in L^2$, then

\[ \| \mathcal{J}^{-1}\Op(p) \mathcal{J}(g) -\Op(p) g \|_{L^2} \le C(p) \|g\|_{L^2},\]
where $C(p)= C(\|t \pa_x p \|_{1,1}+ \|t p\|_{1,1} +\| \pa_\xi p\|_{1,0}). $
\end{thm}

\begin{proof}
Define  $\tilde{g}=D^{-1} \mathcal{J} g$  and observe that $\|\tilde{g}\|_{L^2} \le \|g\|_{L^2}$.
We write $\Op(p) g=\Op(p) \mathcal{J}^{-1}D \tilde{g}, \Op(p) \mathcal{J}g= \Op D \tilde{g}$ and sum and subtract $\mathcal{J}^{-1}D \Op(p) \tilde{g}$. Then

\[ \begin{aligned}\mathcal{J}^{-1} \Op(p) \mathcal{J}g-\Op(p)(g)&= \mathcal{J}^{-1}D\big(D^{-1}\Op(p) D \tilde{g}-\Op(p)\tilde{g}\big)\\ &+
[\Op(p), \mathcal{J}^{-1}D] \tilde {g}.
\end{aligned} \]

In order to deal with the first term readily notice that $\mathcal{J}^{-1}D$ has a bounded Fourier multiplier and
 lemma~\ref{ComDp} implies that

 \begin{equation}
 \|D^{-1}\Op(p) D \tilde{g}-\Op(p)\tilde{g}\|_{L^2} \le \|t \pa_x p \|_{1,1} \|g\|_{L^2}.
 \end{equation}

  For the second,   notice that since $\mathcal{J}^{-1}D$ has a symbol $m=j^{-1}d$ independent of $x$ then $Op(p)\circ \mathcal{J}^{-1}D=Op\left(p\cdot m\right)$. Thus

  $$[ \mathcal{J}^{-1}D, Op(p)] =\mathfrak{C}(m,p)
.$$

Therefore we can estimate $||\mathfrak{C}(m,p)||_{L^2\to L^2}$ by theorem \ref{CastroHwangComp},

\begin{align*}
|||\mathfrak{C}(m,p)|||\leq C\left(\|m\|_{0,1}  \|\pa_\xi p\|_{1,0}+ \left(\|p\|_{1,1}+\|p\|_{2,0} \right) \|\pa_\xi m\|_{0,0} \right).
\end{align*}

Lemma \ref{ftoc} implies that  $m\in L^\infty$ and $\|\partial_\xi m\|_{L^\infty} \le C t$. Therefore, we achieve the conclusion of the lemma by the assumptions on $p$.

\end{proof}

\

\section{Mixing solutions in the stable regime}\label{stable}
As discussed in the introduction our work was motivated by \cite{laslo} where it is shown that in the case of horizontal interface there exists subsolutions in the unstable regime but
it seems imposible to find them in the stable regime and perhaps they do not exist.  Surprisingly, if the flat interface is not horizontal then one can construct mixing solutions with a straight initial interface in both the fully stable and the fully unstable regime.  The proof runs along similar steps than  the one  in \cite{laslo}. Even if  we will need the machinery expose in section \ref{Hprinciple} to carry out this construction this section only expect to be a remark.

Let's consider the change of variables $\xb(s,\lambda)=s\tb+\nb \lambda$, with $\tb=\frac{(\mu_1,\mu_2)}{\sqrt{\mu_1^2+\mu_2^2}}$, $\mu_1\geq 0$ and $\mu_2\in \R$. We declare $\ep=ct$, with $c>0$ and $\Omega_{mix}=\{ \xb\in \R^2\,:\, \xb=\xb(s,\lambda),\quad s\in\R, \quad -\ep(t)<\lambda<\ep(t)\}$. We define $\rho$, $\ub$ and $\mb$ through $\rho^\sharp=-\sign(\sigma)\frac{\lambda}{\ep}$, $\ub^\sharp=-\frac{\mu_2}{\sqrt{\mu_1^2+\mu_2^2}} \rho^\sharp \tb$ and $\mb^\sharp=\rho^\sharp \ub^\sharp-\gamma^\sharp\left(1-\left(\rho^\sharp\right)^2\right)\nb -\frac{1}{2}\left(1-\left(\rho^\sharp\right)^2\right)(0,1)$, with $\gamma^\sharp\in \R$. Here $\sigma>0$ yields an initial data in the stable regime and $\sigma<0$ an initial data in the unstable regime. Then
\begin{align*}
\nabla f (x(s,\lambda))= & \tb \pa_s f^\sharp+\nb \pa_\lambda f^\sharp\\
\nabla\cdot  \fb (x(s,\lambda)) = & \tb\cdot \pa_s \fb^\sharp +\nb \cdot \pa_\lambda \fb^\sharp.
\end{align*}
Using this formulas is easy to check that $\nabla \cdot \ub =0$, $\nabla^\perp \cdot\ub =-\pa_{x_1}\rho$ and $\ub\cdot\nabla \rho=0$. In addition, the equation $\pa_t \rho +\nabla \cdot \mb=0$ transforms to $\gamma^\sharp =\frac{1}{2}\left(\frac{\mu_1}{\sqrt{\mu_1^2+\mu_2^2}}+\sign(\sigma) c\right)$

If $\sigma<0$ then we obtain from \eqref{convex12} the constrain $0<c<1+\frac{\mu_1}{\sqrt{\mu_1^2+\mu_2^2}}$. If $\sigma>0$ we obtain $0<c<1-\frac{\mu_1}{\sqrt{\mu_1^2+\mu_2^2}}$, what give rise to a mixing solution in the stable regime but if the interface is flat and horizontal.

At this point, it is convenient to notice that the case non horizontal and flat interface is only stable in the sense of the Muskat curve-evolution equation. In the hydrodynamical context this configuration is unstable even if the lighter fluid is above, because it leads to an instantaneous velocity shear layer (i.e. discontinuity in the velocity). The only hydrodynamically stable configuration seems to be the flat horizontal interface with the lighter fluid above.

\section*{Acknowledgements}

AC, DC and DF were partially supported by ICMAT Severo Ochoa projects SEV-2011-0087 and SEV-2015-556 and by the Spanish Ministry of Science and Innovation, through the "Severo Ochoa Program for Centre of Excellence in R\&D" (CEX2019-00904-S)".
AC and DC were partially supported by the grant MTM2014-59488-P and MTM2017-89976-P (Spain).  AC and DF were partially supported  by the ERC grant 307179-GFTIPFD. AC was partially supported by the Ram\'on y Cajal program RyC-2013-14317 and  the Europa Excelencia program ERC2018-092824 (Spain). DC was partially supported by the ERC grant 788250-NONFLU. DF was partially supported by the grants MTM2014-57769-P-1, MTM2017-85934-C3-2-P(Spain) and ERC grant 834728-QUAMAP.  We are very thankful to Fabricio Mac\'ia for suggesting
the  semiclassical interpretation and to Mikko Salo for illuminating discussions on the classical theory.

\appendix
\section{Appendix}\label{Linearization and lower order terms}

In this appendix we will prove lemma \ref{ordenmasalto}, lemma \ref{descomposicion} and the required estimates
for the velocity $u$ and for the coefficient of the transport term $a$. Throughout the whole section, there are integrals
which are interpreted in the principal value sense, both at $0$ and $\infty$. Since, this is standard and harmless in our context, we will not make it  explicit.

\subsection{Lower order terms. Proof of lemma~\ref{ordenmasalto}}\label{ordenmasaltosection}

We can write
\begin{align*}
\mathcal{M} u =-\frac{1}{4\pi}\int_{-1}^1\int_{-1}^1 \int_{\R} k_{\theta}(x,y)\pa_x \theta dy d\lambda'd\lambda.
\end{align*}
 The main part of the proof of the lemma  will be showing that
\begin{align}
&\left|\left|\mathcal{D}^{-1}\left(\pa^5_x \mathcal{M} u+\frac{1}{4\pi}\int_{-1}^1\int_{-1}^1 \int_{\R} k_{\theta}(\cdot,y)\pa^6_x \theta dy d\lambda'd\lambda\right)\right|\right|_{L^2}\nonumber\\&\leq C\left(||f||_{H^4},||\mathcal{D}^{-1}\pa^5_x f||_{L^2}\right)\label{loquesea}.
\end{align}

 In order to accomplish this,  we will need to compute the derivatives of the function
\begin{align*}k_\theta(x,y)=\frac{y}{y^2+\theta^2},\end{align*}
where
\begin{align*}
\theta=\Delta f(x,x-y)+\ep(x)\lambda-\ep(x-y)\lambda'=\Delta f+\Delta \ep \lambda' +\ep(x)(\lambda-\lambda').
\end{align*}
In addition we introduce
\begin{align*}
h= f(x)+\lambda\ep(x)\quad \text{and} \quad h'=f(x)+\lambda'\ep(x),\gamma=\ep(x)(\lambda-\lambda').
\end{align*}
Thus,
\begin{align*}
\theta=\Delta h' +\ep(x)(\lambda-\lambda')=\Delta h'+\gamma,
\end{align*}
where we remark that $\gamma$ depends on $x$ and on $t$ although we will not make this dependence explicit. We recall that $c(x,t)$ is as in the statement of theorem \ref{existencialocal}. Since $\ep(x,t)=c(x,t)t$, with $0<\overline{c}<c(x,t)$ and $||c(x,t)||_{C^5}\leq C$,

\begin{equation}
 \|\gamma \|_{C^5} \le C t|\lambda-\lambda'|.
\end{equation}

A big part of our proof will be based on comparing $\theta$ with its linearized version

\begin{align*}
\theta_{lin}=\pa_x h'(x) y+\gamma.
\end{align*}

\begin{rem}
Notice that in order to obtain selfadjoint pseudodifferential operators we could deal as well with
\[ \theta_{lin}^W=\pa_x h'(\frac{x+y}{2}) y+\gamma, \]
Here the $W$ in the exponent stands for the Weyl quantization. We do not pursue this issue here.
\end{rem}

To make the notation even more compact we will write $\Delta f=\Delta f(x,x-y)$ and $\pa^k_x f(x)=\pa^k_x f$. Then we have
\begin{align*}
&\pa_x k_\theta(x,y)=-2\frac{y\theta\pa_x\theta}{(y^2+\theta^2)^2}\equiv -2k_\theta^{11}(x,y),
\end{align*}

\begin{align*}
&\pa^2_x k_\theta(x,y)=-2\frac{y((\pa_x\theta)^2+(\theta)\pa_x^2\theta)}{(y^2+(\theta)^2)^2}
+8\frac{y(\left(\theta)\pa_x\theta\right)^2}{(y^2+(\theta)^2)^3}\equiv c_{21} k_{\theta}^{21}(x,y)+c_{22} k_\theta^{22}(x,y),
\end{align*}

\begin{align}\label{k3}
&\pa_x^3 k_\theta(x,y)=-2y \frac{3\pa_x\theta\pa^2_x\theta +(\theta)\pa^3_x\theta}{(y^2+(\theta)^2)^2}+24 y \frac{(\theta)\pa_x\theta((\pa_x\theta)^2+(\theta)\pa^2_x\theta)}{(y^2+(\theta)^2)^3}\nonumber-48y\frac{(\theta)^3(\pa_x\theta)^3 }{(y^2+(\theta)^2)^4}\nonumber \\
&\equiv c_{31}k_\theta^{31}(x,y)+c_{32}k_\theta^{32}(x,y)+c_{33}k_\theta^{33}(x,y),
\end{align}

\begin{align*}
&\pa^4_x k_\theta(x,y)=384y\frac{(\theta)^4(\pa_x\theta)^4}{(y^2+(\theta)^2)^5}-288y\frac{(\theta)^2(\pa_x\theta)^2((\pa_x \theta)^2+(\theta)
\pa_{x}^2\theta)}{(y^2+(\theta)^2)^4}\\&+y\frac{24(\pa_x\theta)^4+144(\theta)(\pa_x\theta)^2\pa^2_x\theta +24(\theta)^2(\pa^2_x\theta)^2}{(y^2+(\theta)^2)^3}+y\frac{32(\theta)^2\pa_x\theta \pa^3_x \theta}{(y^2+(\theta)^2)^3}-y\frac{6(\pa_x^2\theta)^2+8\pa_x\theta\pa^3_x\theta-2(\theta)\pa^4_x\theta)}{(y^2+(\theta)^2)^2}\\
&\equiv c_{41}k_\theta^{41}(x,y)+c_{42}k_\theta^{42}(x,y)+c_{43}k_\theta^{43}(x,y)+c_{44}k_\theta^{44}(x,y),
\end{align*}

\begin{align*}
&\pa^5_x k_\theta(x,y)=-3840y\frac{(\theta)^5(\pa_x\theta)^5}{(y^2+(\theta)^2)^6}+3840y\frac{(\theta)^3(\pa_x\theta)^5+(\theta)^4(\pa_x \theta )^3\pa^2_x\theta}{(y^2+(\theta)^2)^5}\\
&+y\frac{240 (\pa_x\theta)^3\pa^2_x\theta+360(\theta)\pa_x \theta (\pa_x^2\theta)^2+240(\theta)(\pa_x\theta)^2\pa_x^3\theta }{(y^2+(\theta)^2)^4}
+y\frac{80(\theta)^2\pa^2_x\theta\pa^3_x\theta+40(\theta)^2\pa_x\theta \pa^4_x\theta}{(y^2+(\theta)^2)^3}\\
&-y\frac{10\pa_x\theta \pa^4_x \theta +2(\theta )\pa^5_x \theta}{(y^2+(\theta)^2)^2}\equiv c_{51}k_\theta^{51}(x,y)+c_{52}k_\theta^{52}(x,y)+c_{53}k_\theta^{53}(x,y)+c_{54}k_\theta^{54}(x,y)+c_{55}k_\theta^{55}(x,y),
\end{align*}
where we notice that the numbers $c_{ij}$ $i,j=1,2,3,4,5$ are harmless coefficients.
Then, by applying Minkowski inequality, we need to bound
\begin{align*}
\sum_{j=1}^5\left|\left|\mathcal{D}^{-1}\int_{\R}\pa^j_xk_{\theta}(\cdot,y)\pa^{5-j}_x\pa_x\theta dy\right|\right|_{L^2}\\
=\sum_{j=1}^5\sum_{i=1}^j\left|\left|\mathcal{D}^{-1}\int_{\R}k^{ji}_{\theta}(\cdot,y)\pa^{5-j}_x\pa_x\theta dy\right|\right|_{L^2}
\end{align*}
independently of $t$, $\lambda$ and $\lambda'$. The highest order terms in this sum are given by
\begin{align}\label{safe}
\mathcal{D}^{-1}\int_{\R}k^{11}_{\theta}(x,y)\pa^5_x\theta dy \quad \text{and}\quad \mathcal{D}^{-1}\int_{\R}k^{55}_{\theta}(x,y)\pa_x\theta dy.
\end{align}
Since  there are 5 derivatives of the function $\theta$ in both terms we have to use the operator $D^{-1}$. Since $D^{-1}$ is bounded in $L^2$
it holds that
\begin{align}\label{easyb}
&\sum_{j=2}^5\sum_{i=1}^{\min(j,\,4)}\left|\left|\mathcal{D}^{-1}\int_{\R}k^{ji}_{\theta}(\cdot,y)\pa^{5-j}_x\pa_x\theta dy\right|\right|_{L^2}\\ &\leq \sum_{j=2}^5\sum_{i=1}^{\min(j,\,4)}\left|\left|\int_{\R}k^{ji}_{\theta}(\cdot,y)\pa^{5-j}_x\pa_x\theta dy\right|\right|_{L^2},\nonumber
\end{align}
which make the computation easy. We first deal with the sum \eqref{easyb} and finally with \eqref{safe}, which are somewhat more delicate.

We will use the following convection. We will write:
\begin{enumerate}
\item $\precsim$ meaning "is bounded  in absolute value by ".
\item $f(x)\sim g(x)$ if $||f-g||_{L^2}\leq \lar$.

\item We will denote by $k^{ji}(x)$ the integral $$\int_{\R}k^{ji}_\theta(x,y)\pa^{6-j}_x\theta dy.$$
\item $C_{k+\alpha}$ will be a constant depending on $||f||_{C^{k+\alpha}}$,  with $k$ an integer and $0\leq \alpha<\frac{1}{2}$. $C_{k+\alpha}(x)$ will be a function whose $L^\infty-$norm is bounded by a constant depending on $||f||_{C^{k+\alpha}}$.
\item Given an integral $\int_{\R}f(x,y)dy$ we will  estimate separately, $\int_{|y|>1}f(x,y)dy$ its $in-$part and  $\int_{|y|<1}f(x,y)dy$ its $out-$part.
Several terms $k^{ji}(x)$, with $i$ and $j$ integers, will arise in the computations above. In these terms there always will be an integration of the form $$k^i_j(x)=\int_{\R}...dy.$$
We will call $k^{ji}_{\,in}(x)$ and $k^{ji}{j\, out}(x)$ to its $in-$part and to its $out-$part respectively.
\item For any $f$, we will write $\Delta_{t|\lambda-\lambda'|} f \equiv \Delta f (x, x-|t(\lambda-\lambda')|y).$
\item We always assume that $\ep<1$.
\item In every integral we take a principal value.
\end{enumerate}

\subsubsection{ Preliminary lemmas}
The proof is rather long and will be armed by  the lemmas below.  They could be ordered as follows.
\begin{itemize}
\item [i)]Estimates of pointwise of the kernel. Lemmas~\ref{Ca}, \ref{ca} and \ref{yelambda} estimate operators with non singular terms.
\item[ii)]Comparison between the kernels depending $\theta$ and those depending on  $\theta_{lin}.$ Lemmas \ref{guapis1} and \ref{121in}.
\item[iii)]Lemmas on kernels depending on $\theta,\theta_{lin}$.
\end{itemize}
The proof of iii) either use i) to show that the kernels are not singular or rely on properties of the Hilbert transform.

 The first two lemmas are pointwise properties of the functions involved. The proofs follows from the mean value theorem.
\begin{lemma}\label{Ca}
There exists a constant $1<C_A<\infty$ depending only on the $L^\infty$-norm of the $\pa_xh'(x)$ such that
\begin{align*}
\frac{1}{(y\pm\sigma_h(x) A_h(x)c(x))^2+c(x)^2\sigma(x)^2}\leq \mathcal{C}_A(y)\equiv\left\{\begin{array}{cc}C_A \quad |y|\leq C_{A} \\ \frac{C_A}{y^2} \quad |y|>C_A\end{array}\qquad \forall x\in \R\right.
\end{align*}
Here $A_h(x)=\pa_xh'(x)$ and $\sigma_h(x)=\frac{1}{1+A_h(x)^2}$.
\end{lemma}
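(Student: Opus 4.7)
The plan is to exploit two elementary bounds and then split on the size of $|y|$. First, note that the cross term is universally controlled: by AM--GM,
\[
|\sigma(x)A(x)| \;=\; \frac{|A(x)|}{1+A(x)^2} \;\le\; \frac{1}{2}
\]
with no dependence on $\|A\|_\infty$. Second, one has the lower bound
\[
\sigma(x) \;=\; \frac{1}{1+A(x)^2} \;\ge\; \sigma_{\min} \;:=\; \frac{1}{1+\|A\|_\infty^2} \;>\; 0,
\]
so $\sigma$ is bounded below away from $0$ uniformly in $x$, with a constant depending only on $\|A\|_\infty$.

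With these in hand I would argue by case distinction. In the regime where $|y|$ is small, I just drop the $(y\pm\sigma A)^2$ term: the denominator is at least $\sigma^2 \ge \sigma_{\min}^2$, so
\[
\frac{1}{(y\pm\sigma(x)A(x))^2+\sigma(x)^2} \;\le\; \sigma_{\min}^{-2} \;=\; (1+\|A\|_\infty^2)^2.
\]
In the regime $|y|\ge 1$, the uniform bound $|\sigma A|\le 1/2$ together with the triangle inequality gives $|y\pm\sigma A|\ge |y|-1/2\ge |y|/2$, hence
\[
\frac{1}{(y\pm\sigma(x)A(x))^2+\sigma(x)^2} \;\le\; \frac{4}{y^2}.
\]
Both estimates hold for every $x\in\R$, and the large-$y$ estimate is independent of $\|A\|_\infty$ precisely because $\sigma A$ is universally bounded.

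Setting $C_A := \max\bigl(4,\,(1+\|A\|_\infty^2)^2\bigr)$ yields a constant that is strictly greater than $1$, depends only on $\|A\|_\infty$, and reconciles the two regimes with the threshold $|y|=C_A$ appearing in the definition of $\mathcal{C}_A$: for $|y|\le C_A$ the first bound gives at most $C_A$, while for $|y|>C_A\ge 1$ the second bound gives $4/y^2\le C_A/y^2$. There is no genuine obstacle; the entire lemma is a pointwise calculus check, and the only observation worth emphasizing is that the small-$y$ bound is the one that sees $\|A\|_\infty$ (through $\sigma_{\min}$), while the large-$y$ decay is universal.
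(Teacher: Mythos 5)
Your proof is correct and follows essentially the same route as the paper: bound $\sigma$ below by $\sigma_{\min}=\frac{1}{1+\|A\|_{L^\infty}^2}$ to get the uniform bound for moderate $|y|$, and use that the shift $\sigma(x)A(x)$ is uniformly bounded to get the $C_A/y^2$ decay for large $|y|$. The only (minor) refinement is your AM--GM observation $|\sigma A|\le \tfrac12$, which makes the large-$|y|$ constant universal, whereas the paper simply uses $|\sigma(x)A(x)|\le C_A$.
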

\begin{proof}
Along this proof $C_A$ denotes a constant bigger that 1 and depending only in $||A_h||_{L^\infty}$. Firstly we notice that
$$\frac{1}{(y\pm\sigma_h(x) A_h(x)c(x))^2+c^2\sigma^2}\leq \frac{1}{(y\pm\sigma_h(x) A_h(x)c(x))^2+\left(\inf_{x\in\R}c(x)\sigma_h(x)\right)^2},$$
where $$\inf_{x\in\R} \sigma_h(x)=\frac{1}{1+||A_h||_{L^\infty}^2}\equiv \sigma_{inf}.$$  Fixed $x$, the function $\frac{1}{(y\pm\sigma(x)A(x)c(x))^2+(\sigma_{inf})^2}$ is a translation of the function $\frac{1}{y^2+(\sigma_{\inf})^2}$.  This is bounded by $\frac{1}{(\sigma_{inf})^2}$ and decay like $\frac{C_A}{y^2}$.  But $-C_{A}\leq \sigma_h(x)A_h(x)c(x)\leq C_{A}$. Then the conclusion of the lemma follows easily.
\end{proof}
The following lemma will allow us to show that numerators of various kernels are in fact bounded for $|y|$ sufficiently small.
\begin{lemma}\label{ca} There exists a constant $c_A$ which depends on $||f||_{C^1} +||\ep||_{C^1}$ and on $\overline{c}$  such that for $|y|<c_A$ the following inequality holds:
\begin{align*}
|c(x,t)|-\left|\frac{\Delta_{t|\lambda-\lambda'|} h  }{t|\lambda-\lambda'|y}y\right| & \geq \frac{\overline{c}}{2}\\
|c(x,t)|-|\pa_x h'(x) y|  & \geq \frac{\overline{c}}{2}.
\end{align*}
\end{lemma}
\begin{proof} By the mean value theorem,
\begin{align*}
\left|\frac{\Delta f(x,x-t|\lambda-\lambda'| y)+\Delta \ep(x,x-t|\lambda| y)}{t|\lambda-\lambda'|y}\right|\leq ||f||_{C^1} +||\ep||_{C^1}
\end{align*}
thus the claim follows.
\end{proof}
For the reiterative use we state that $L^1$ kernels gives us good bounds.

\begin{lemma}\label{yelambda} Consider a kernel $J_\lambda(x,y)$ satisfying $|J_\lambda(x,y)|\leq j(y)\in L^1(\R)$, for all $x\in \R$ and $\lambda \in [-1,1]$. Then, the  the integral
\begin{align*}
I_\lambda(x)=\int_{\R}J_\lambda(x,y)f(x-\lambda y)dy
\end{align*}
is bounded in $L^2$ as  $||I_\lambda||_{L^2}\leq C(||j||_{L^1})||f||_{L^2}$.
\end{lemma}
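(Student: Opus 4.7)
The plan is to apply Minkowski's integral inequality and then use the pointwise bound $|J_\lambda(x,y)|\le j(y)$ together with the translation invariance of the Lebesgue measure. The conclusion will follow essentially immediately, so the proposal is short.

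First, viewing $I_\lambda$ as an $L^2_x$-valued integral over $y$, Minkowski's integral inequality yields
\begin{equation*}
\|I_\lambda\|_{L^2_x} \;\le\; \int_{\R} \left\|\, J_\lambda(\cdot,y)\, f(\cdot-\lambda y)\,\right\|_{L^2_x}\,dy.
\end{equation*}
Since the hypothesis gives $|J_\lambda(x,y)|\le j(y)$ uniformly in $x\in\R$ and $\lambda\in[-1,1]$, we can pull out the $x$-independent weight $j(y)$:
\begin{equation*}
\left\|\, J_\lambda(\cdot,y)\, f(\cdot-\lambda y)\,\right\|_{L^2_x} \;\le\; j(y)\,\|f(\cdot-\lambda y)\|_{L^2_x}.
\end{equation*}

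Next, by translation invariance of Lebesgue measure on $\R$, $\|f(\cdot-\lambda y)\|_{L^2_x}=\|f\|_{L^2}$ regardless of the value of $\lambda y$. Combining these two bounds gives
\begin{equation*}
\|I_\lambda\|_{L^2} \;\le\; \|f\|_{L^2}\int_{\R} j(y)\,dy \;=\; \|j\|_{L^1}\,\|f\|_{L^2},
\end{equation*}
which is the desired inequality with $C(\|j\|_{L^1})=\|j\|_{L^1}$.

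There is no real obstacle here: the only thing to check is that the hypotheses justify the use of Minkowski (measurability of $J_\lambda$ and $f$, and finiteness of the right-hand side), which are standard under the assumption $j\in L^1$ and $f\in L^2$. The lemma is a bookkeeping tool, and its proof is essentially a one-line application of Minkowski's integral inequality combined with translation invariance.
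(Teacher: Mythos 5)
Your proof is correct and is exactly the paper's argument: the paper simply notes the result follows from Minkowski's integral inequality, and your write-up fills in the same steps (pull out $j(y)$ by the uniform bound, use translation invariance of the $L^2$ norm in $x$, and integrate in $y$).
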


\begin{proof}
Again the proof is straightforward by using Minkowski inequality.
\end{proof}

The next  two lemmas allow us to compare $\theta$ with $\theta_{lin}=\pa_x h' y +\gamma $ in various expressions. We start with a pointwise bound.

\begin{lemma}\label{guapis1}The following bound holds for every $y \in \mathbb{R}$.
\begin{align*}
&\frac{1}{\left(y^2+\theta^2\right)^a}-\frac{1}{\left(y^2+\theta_{lin}^2\right)^a}\\
&\precsim  C_2\sum_{l=-1}^{2(a-1)}\frac{|y|^{2a-l}|\gamma|^{l+1}}{\left(y^2+\theta)^2\right)^a\left(y^2+\theta_{lin}^2\right)^a}
\end{align*}
for $a\geq 2$.
\end{lemma}
\begin{proof}
We just write that
\begin{align*}
&\frac{1}{\left(y^2+\left(\theta\right)^2\right)^a}-\frac{1}{\left(y^2+\theta_{lin}^2\right)^a}\\&=
\frac{\left(y^2+\theta_{lin}^2\right)^a-\left(y^2+\left(\theta\right)^2\right)^a}{\left(y^2+\left(\theta\right)^2\right)^a\left(y^2+\theta_{lin}^2\right)^a}
\end{align*}
and, since, $c^a-b^a= (c-b)\sum_{l=1}^{a} c^{a-l}b^{l-1}$ for $c,b\in \R$, we have that
\begin{align*}
&\left(y^2+\theta_{lin}^2\right)^a-\left(y^2+\left(\theta\right)^2\right)^a\\
&=\left(\theta_{lin}^2-\left(\theta\right)^2\right)\sum_{l=1}^a \left(y^2+\theta_{lin}^2\right)^{a-l}\left(y^2+\theta^2\right)^{l-1}.
\end{align*}
Next we introduce the expansions
\begin{align*}
&\left(\theta_{lin}^2-\left(\theta\right)^2\right)=\left(\pa_x h' y-\Delta h'\right)\left(\pa_x h' y +\Delta h' +2\gamma\right)\\
&\precsim C_2 |y|^2\left(|y|+|\gamma|\right),
\end{align*}

Here, we have used that since $h \in H^4$ , we have uniform $L^\infty$ bound of $\pa_x h',\pa^2_x h'$
and thus its uniform Lipschitz continuity. Next,  since

\begin{align*}
&\left(y^2+\theta_{lin}^2\right)^{a-l}=\sum_{i=0}^{a-l}c(i,a-l)y^{2(a-l-i)}(\theta_{lin})^{2i}\\
&=\sum_{i=0,n=0}^{a-l,2i}c(i,a-l)c(n,2i)y^{2(a-l)-n}(\pa_x h')^{2i-n}\gamma^n\\
&\precsim C_1 \sum_{i=0,n=0}^{a-l,2i}|y|^{2(a-l)-n}|\gamma|^n
\end{align*}
and
\begin{align*}
&\left(y^2+\left(\theta\right)^2\right)^{l-1}\\
&\precsim C_1 \sum_{j=0,m=0}^{l-1,2i}|y|^{2(l-1)-m}|\gamma|^m,
\end{align*}
it follows that,
\begin{align*}
&\left(y^2+\theta_{lin}^2\right)^a-\left(y^2+\left(\theta\right)^2\right)^a\\
&\precsim C_2 |y|^2(|y|+|\gamma|)\sum_{i=0,n=0}^{a-l,2i}\sum_{j=0,m=0}^{l-1,2i}|y|^{2(a-1)-(n+m)}|\gamma|^{n+m}\\
&\precsim C_2 |y|^2(|y|+|\gamma|)|y|^{2(a-1)}\sum_{l=0}^{2(a-1)}|y|^{-l}|\gamma|^l=C_2 |y|^{2a}(|y|+|\gamma|)\sum_{l=0}^{2(a-1)}|y|^{-l}|\gamma|^l\\
& =C_2 y^{2a}\left(\sum_{l=0}^{2(a-1)}|y|^{-l+1}|\gamma|^l +\sum_{l=0}^{2(a-1)}|y|^{-l}|\gamma|^{l+l}\right)
\\
&\precsim C_2 |y|^{2a}\sum_{l=-1}^{2(a-1)}|y|^{-l}|\gamma|^{l+1}.
\end{align*}
From this last inequality is easy to achieve the conclusion of the lemma.
\end{proof}
Next we show that we can also compare operators depending on $\theta$ by those depending on its linearization $\theta_{lin}$.

\begin{lemma}\label{121in}  Let  $a=2,3,4$ or $5$ and define
\begin{align*}
&k[g](x)=\int_{|y|<1}\left(\sum_{i=0}^{2a-1}|\gamma|^i|y|^{2a-1-i}\right)\\ &\quad\times \left(\frac{1}{\left(y^2+\theta^2\right)^a}-\frac{1}{\left(y^2+\theta_{lin}^2\right)^a}\right)g(x-y)dy.
\end{align*}
Then,
\[ \|k[g]\|_{L^2} \le  \lar ||g||_{L^2}, \|k[g]\|_{L^\infty} \le   \lar ||g||_{L^\infty}, \|k[1]\|_{L^\infty}  \le \lar, \]

where $g\in L^2$ in the first estimate and $g\in L^\infty$ in the second one.

\end{lemma}

\begin{proof}
By lemma \ref{guapis1} we have that
\begin{align*}
k(x)\precsim &C_2\int_{|y|<1}\sum_{i=0}^{2a-1}|\gamma|^i|y|^{2a-1-i}\\
&\quad \times \sum_{l=-1}^{2(a-1)}\frac{|y|^{2a-l}|\gamma|^{l+1}}{\left(y^2+\left(\theta\right)^2\right)^a\left(y^2+\theta_{lin}^2\right)^a}|g(x-y)|dy\\
&=C_2\sum_{l=-1,\, i=0}^{2(a-1),\, 2a-1}\int_{|y|<1}\frac{|y|^{4a-1-(i+l)}|\gamma|^{i+l+1}}{\left(y^2+\left(\theta\right)^2\right)^a\left(y^2+\theta_{lin}^2\right)^a}|g(x-y)|dy.
\end{align*}
By the upper bound on $\gamma$ we need to estimate $$k_{l,\,i}(x)=\int_{|y|<1}\frac{|y|^{4a-1-(i+l)}|t|\lambda-\lambda'||^{i+l+1}}{\left(y^2+\left(\theta\right)^2\right)^a\left(y^2+\theta_{lin}^2\right)^a}|g(x-y)|dy.$$

After the change of variable $y'=\frac{y}{t|\lambda-\lambda'|}$ we have that
\begin{align*}
&k_{l,\,i}(x)=t|\lambda-\lambda'|\int_{|y|<\frac{1}{t|\lambda-\lambda'|}}\frac{|y|^{4a-1-l-i} |g(x-t|\lambda-\lambda'| y)|}{\left(y^2+\left(\frac{\Delta_{t|\lambda-\lambda'|} h'}{t|\lambda-\lambda'|y}y+c(x)\sign(t(\lambda-\lambda'))\right)^2\right)^a\left(y^2+\left(\pa_xf y+c(x)\sign(t(\lambda-\lambda'))\right)^2\right)^a} dy\\
&=\int_{|y|<c_A}... \,dy + \int_{c_A <|y|<\frac{1}{t|\lambda-\lambda'|}}...\, dy.
\end{align*}
The integrand in $k_{l,\,i}(x)$ is bounded in $|y|<c_A$ by lemma \ref{ca} for every $-1\leq l\leq 2(a-1)$ and $0\leq i \leq 2a-1$. In $|y|>c_A$, the integrand is bounded by $C |y|^{-1-i-l}$ for every $-1\leq l\leq 2(a-1)$ and $0\leq i \leq 2a-1$. Then  Minkowski inequality yields
\begin{align*}
||k_{l,\,i}||_{L^2}\leq C_1 ||g||_{L^2}\left(1+|\gamma|\int_{c_A\leq |y|\leq \frac{1}{t|\lambda-\lambda'|}}|y|^{-1-i-l}dy\right)\leq C_1||g||_{L^2}
\end{align*}
for every $-1\leq l\leq 2(a-1)$ and $0\leq i \leq 2a-1$.
The $L^\infty$ bound simply follows by extracting $\|g\|_\infty$ as a constant.
\end{proof}

The next lemma  gives $L^2$ and $L^\infty$ bounds for the various operators. It turns out that after a change
of variable, lemma~\ref{ca} shows that in fact the kernels are not singular near cero, whereas away from cero direct $L^\infty$ bounds
are available for the kernel. This yields direct proofs for $L^\infty$ bounds and a further use of Minkowski inequality yields $L^2$ bound.
We state separately the action of the operator on 1 for later use.

\begin{lemma}\label{In}
Let $g\in L^2$.

\begin{itemize}
\item [a)] Let  $a\ge 2 $ and  $k[g](x)$ be given by
\begin{align*}
&k[g](x)=\sum_{i=0}^{2a}\int_{|y|<1}\frac{|y|^{2a-i}|\gamma|^i}{\left(y^2+\theta^2\right)^a}|g(x-y)|dy.
\end{align*}
Then
\[ \|k[g]\|_{L^2} \le  \lar ||g||_{L^2}, \quad \|k[1]\|_{L^\infty}  \le \lar. \]

\item [b)] Let $a=2,...,10$ and $k[g]$ be given by
\begin{align*}
&k[g](x)=\sum_{i=1}^{2a-1}\int_{|y|<1}\frac{|y|^{2a-1-i} |\gamma |^i}{\left(y^2+\theta^2\right)^a}g(x-y)dy.
\end{align*}
Then
\[ \|k[g]\|_{L^2} \le  \lar ||g||_{L^2}, \quad  \|k[1]\|_{L^\infty}  \le \lar. \]

\end{itemize}
\end{lemma}

\begin{proof}
We prove first the $L^2$ bound for $a)$.
 We notice that, by the upper bound on $\gamma$,it  is enough to estimate
\begin{align*}
k_i(x)=\int_{|y|<1}\frac{|y|^{2a-i}|t|\lambda-\lambda'||^i}{\left(y^2+\theta\right)^a}|g(x-y)|dy.
\end{align*}
After the change of variables $y'=\frac{y}{t|\lambda-\lambda'|}$ we have that
\begin{align*}
&k_i(x)=\int_{|y|<\frac{1}{t|\lambda-\lambda'|}}t|\lambda-\lambda'|\frac{|y|^{2a-i}}{\left(y^2+\left(\frac{\Delta_{t|\lambda-\lambda'} h' }{t|\lambda-\lambda'| y}y+c(x,t)\sign(\lambda-\lambda')\right)^2\right)^a}|g(x-t|\lambda-\lambda'|y)|dy\\
&=\int_{|y|<c_A}...\,dy +\int_{c_A<|y|<\frac{1}{t|\lambda-\lambda'|}}...\,dy.
\end{align*}
For every $i=0,...,2a$, in the region $|y|<c_A$ we can apply lemma \ref{ca}, obtaining that  the kernel is uniformly bounded. In the region $c_A<|y|<\frac{1}{t|\lambda-\lambda'|}$ we can estimate
\begin{align*}
t|\lambda-\lambda'|\frac{|y|^{2a-i}}{\left(y^2+\left(\frac{\Delta_{t|\lambda-\lambda'|} h}{t|\lambda-\lambda'| y}y+c(x,t)\sign(\lambda-\lambda')\right)^2\right)^a}\leq C_A t|\lambda-\lambda'| |y|^{-i}\leq C_A t|\lambda-\lambda'|,
\end{align*}
for every $i=0,...,2a$. Then we can apply Minkowski inequality to prove the lemma. Indeed,
\begin{align*}
&C_At|\lambda-\lambda'|\left|\left|\int_{c_A<|y|<\frac{1}{t|\lambda-\lambda'|}}|g(x-t|\lambda-\lambda'|y)|dy\right|\right|_{L^2}\leq
C_At|\lambda-\lambda'|\int_{c_A<|y|<\frac{1}{t|\lambda-\lambda'|}}|\left|\left|g(\cdot-t|\lambda-\lambda'|y)\right|\right|_{L^2}dy\\
&\leq C_At|\lambda-\lambda'|||g||_{L^2}\int_{c_A<|y|<\frac{1}{t|\lambda-\lambda'|}}dy \leq C_A ||g||_{L^2}.
\end{align*}

The $L^\infty$ bound follows in the same way. The case $b)$ is dealt with by the same change of variables.

\end{proof}

In the next lemmas the kernel scales as $y^{-1}$  and thus the estimates are more delicate. For the outer
integral we show that  the kernel can be decomposed as the sum of  $c(x)\frac1y$ and a function which decays as $|y|^{-2}$.  Thus, the Hilbert transform controls the first
and the second is not singular.

\begin{lemma}\label{121out}
Let $g\in L^2$,   $a\geq 2$ and
\begin{align*}
&k(x)=\int_{|y|>1}\frac{y^{2a-1}}{\left(y^2+\theta_{lin}^2\right)^a}g(x-y)dy.
\end{align*}
Then,
\[ \|k[g]\|_{L^2} \le  \lar ||g||_{L^2}, \quad  \|k[1]\|_{L^\infty}  \le \lar.  \]
\end{lemma}

\begin{proof}
Direct computation shows hat
\[\frac{y^{2a-1}}{\left(y^2+\theta_{lin}^2\right)^a}-\frac{1}{(1+(\pa_x h')^2)^a}\frac{1}{y}=j(x,y), \]
where
$j(x,y)=\frac{(1+(\pa_x h')^2)^a y^{2a}-(y^2+(\pa_x h' y+\gamma)^2)^a}{\left(y^2+\theta_{lin}^2\right)^a(1+(\pa_x h')^2)^2y}.$
Since  $|j(x,y)| \le C|y|^{-2}$ the claim for $L^2$ follows from the $L^2$ boundedness of the truncated Hilbert transform and lemma~{\ref{yelambda}}. In
order to estimate $k[1]$ notice that $\int_{|y|>1} \frac{1}{y} dy =0$.

\end{proof}

The proof of the next lemma is more subtle as it uses an  explicit computation of the  Hilbert transform of our kernel.
\begin{lemma} \label{hilbert} Let $g\in L^2$ and $a=2,3,4$ or $5$. Then, the integral
\begin{align*}
I[g](x)=\int_{\R}\frac{y^{2a-1}}{\left(y^2+\theta_{lin}^2\right)^a}g(x-y)dy,
\end{align*}
satisfies

\[ \|I[g]\|_{L^2} \le  \lar ||g||_{L^2},\quad \|I[1]\|_{L^\infty}  \le \lar .\]

\end{lemma}
\begin{proof}
In analogy with other estimates,  we denote $\sigma_h=(1+\pa_xh'(x)^2)^{-1}$ and $A_h=\pa_x h'(x)$. Therefore
\begin{align}\label{sigmaidentity}
&y^2+(\pa_x h'y+\gamma)^2=\sigma_h^{-1}((y+A_h\sigma_h\gamma)^2+\sigma_h^2\gamma^2).
\end{align}
and
\begin{align}\label{I}
I(x)= & C_1(x)\int_{\R}\frac{y^{2a-1}}{((y+A_h\sigma_h\gamma)^2+\sigma_h^2\gamma^2)^a}g(x-y)dy\nonumber\\
&=C_1(x)\int_{\R}\frac{(y+A_h\sigma_h\gamma)^{2a-1}}{((y+A_h\sigma_h \gamma)^2+\sigma_h^2\gamma^2)^a}g(x-y)dy\nonumber\\
&+C_1(x)\int_{\R}\frac{y^{2a-1}-(y+A_h\sigma_h\gamma)^{2a-1}}{((y+A_h\sigma_h \gamma)^2+\sigma_h^2\gamma^2)^a}g(x-y)dy\nonumber\\
&\equiv I_1(x)+I_2(x).
\end{align}
Now we use the identity (for fixed $\sigma_h$ and $\gamma$)
\begin{align*}
H\left[\frac{x^{2a-1}}{(x^2+\sigma_h^2\gamma^2)^{2a-1}}\right](x)=-\frac{\sigma_h |\gamma|}{(x^2+\sigma_h^2\gamma^2)^a}\sum_{l=0}^{a-1}\alpha_{al}\left(\sigma_h|\gamma|\right)^{2(a-1-l)}x^{2l}
\end{align*}
where the $\alpha_{al}$'s are harmless coefficients. Then
\begin{align*}
I_1(x)=\int_{\R}\frac{\sigma_h |\gamma|}{((y+A_h \sigma_h\gamma)^2+\sigma_h^2\gamma^2)^a}\sum_{l=0}^{a-1}\alpha_{al}\left(\sigma_h|\gamma|\right)^{2(a-1-l)}(y+A_h \sigma_h \gamma)^{2l}Hg(x-y)dy,\\
\end{align*}
 and after the usual change of variables
 \begin{align*}
& I_1(x)\precsim C_1\sum_{i=0}^{a-1}\int_{\R}\frac{(y+c A_h\sigma_h t\sign((\lambda-\lambda'))^{2i}}{((y+c\sigma_h A_h\sign(\lambda-\lambda'))^2+c^2\sigma_h^2)^a}H\ g(x-t|\lambda-\lambda'| y)dy\\
 &\precsim C_1\sum_{i=0}^{a-1}\int_{\R}(1+|y|^{2i})\mathcal{C}_A(y)^a Hg(x-t|\lambda-\lambda'|y)dy,
 \end{align*}
 where we have applied lemma \ref{Ca}. Then $I_1(x)$ is bounded in $L^2$ thanks to lemma \ref{yelambda}. To bound $I_2(x)$ we notice that we can write this term in the following way:
\begin{align*}
I_2(x)=\int_{\R}\frac{y^{2a-1}-(y+c\sigma_h A_h \sign(\lambda-\lambda'))^{2a-1}}{((y-c\sigma_h A_h\sign(\lambda-\lambda'))^2+c^2\sigma_h^2)^a} g(x-t|\lambda-\lambda'| y)dy.
\end{align*}
Since the numerator is a polynomial in $y$ of order $2(a-1)$ we can apply again lemmas \ref{Ca} and \ref{yelambda} to obtain a suitable estimate in $L^2$.

In order to estimate $I[1]$ simply replace $g(x-y)$ by $1$ and notice that the analogous term to $I_1(x)$ in \eqref{I} is equal to zero in this case.

\end{proof}

\subsubsection{ Estimation of the terms in the sum \eqref{easyb}.}\label{Afirstestimates}

We need to estimate the terms $k_\theta^{ji}\partial_x^{5-j}\theta$ which can be further decomposed into
a sum of products of various derivatives of $\theta$ divided by $(y^2+\theta^2)^a$ for $a=2,3,4$. It will be important
that since we assume that $h$ and hence $\theta$ belongs to $H^4$ we can assume that $\partial_x^2\theta$ is uniformly
Lipschitz. Thus if in the product of derivatives $\partial_x^{j_1} \theta \partial_x^{j_2} \theta \cdots \partial_x^{j_n}
 \partial_x \theta$ there is only one $j_i \ge 3$ we can interpreted as an operator acting on $\partial^{j_i}_x \theta$ and we could
 put our hands on the kernel, linearizing it  using the Lipschitz continuity.  We illustrate this in the first section with $j=2$ and leave
 the rest to the reader. Unfortunately, there are some terms with $j=3$ with
 e.g $(\partial^3_x \theta)^2$ appears. Those have to be dealt with independently. We do it by means of another pseudodifferential operator.

\textbf{1.1. Terms in \eqref{easyb} with $j=2$, $i=1,2$.}

\textbf{1.1.1.} Let us estimate $k^{21}(x)$. We split it into two terms $k^{21}(x)=k^{21}_{1}(x)+k^{21}_2(x)$, with
\begin{align*}
k^{21}_1(x)=-\int_{\R}k^{21}_\theta (x,y)\pa^4_x h'(x-y) dy \quad \text{and} && k^{21}_2(x)=\pa^4_x h\int_{\R} k^{21}_\theta(x,y)dy.
\end{align*}

To bound $k^{21}_1$ we proceed as follows,
\begin{align*}
k^{21}_1(x)=-\int_{\R}y\frac{(\pa_x\theta)^2+\theta\pa^2_x\theta}{(y^2+\theta^2)^2}\pa^4_x h'(x-y) dy.
\end{align*}

Since $y\frac{(\pa_x\theta)^2+\theta\pa^2_x\theta}{(y^2+\theta^2)^2}\precsim C_2|y|^{-3}$, $k^{21}_{1\,out}$ is bounded by lemma \ref{yelambda}. In order to bound the inner part we further split it  into two terms, one with only terms depending on $h'$ and the other where $\gamma$ and its derivatives appear. Namely,  $k^{21}_{1\,in}=k^{21}_{11\,in}+k^{21}_{11\,in}$, with
\begin{align}\label{k2111in}
k^{21}_{11\,in}&=\int_{|y|<1} y \frac{(\pa_x\gamma)^2+2\Delta \pa_x h'\pa_x\gamma}{(y^2+\theta^2)^2}\pa^4_x h'(x-y)dy\nonumber\\
&+\int_{|y|<1} y \frac{\gamma (\Delta\pa^2_x h'+ \pa^2_x\gamma)+\Delta h'  \pa_x^2\gamma}{(y^2+\theta^2)^2}\pa^4_x h'(x-y)dy
\end{align}
and
\begin{align}\label{k2112in}
k^{21}_{12\,in}=\int_{|y|<1} y \frac{(\pa_x\Delta h')^2+\Delta h'\pa^2_x\Delta h'}{(y^2+\theta^2)^2}\pa^4_x h'(x-y)dy.
\end{align}

To estimate $k^{21}_{11\, in}$ we use the regularity of $h'$ and mean value theorem to bound  its integrand by
\begin{align*}
C_3\frac{t^2|\lambda-\lambda'|^2|y|+|y|^2t|\lambda-\lambda'|}{(y^2+\theta^2)^2}|\pa^4h(x-y)|
\end{align*}
and after that we apply lemma  \ref{In}.

To bound $k^{21}_{12}(x)$ we write
\begin{align}\label{k2112split}
k^{21}_{12}(x)=&-\int_{\R}\left(\frac{y\left((\pa_x\Delta h')^2+\Delta \pa^2_x\Delta h'\right)}{\dl^2}-\frac{y^3\left((\pa^2_xh')^2+\pa_x h'\pa^3_x h\right)}{\dlx^2}\right)\pa_x^4h'(x-y)dy\nonumber\\
&-\left((\pa^2_xh')^2+\pa_xh'\pa^3_xh'\right)\int_{\R}\frac{y^3}{\dlx^2}\pa^4_xh'(x-y)dy\nonumber\\
&\equiv k^{21}_{121}(x)+k^{21}_{122}(x).
\end{align}

To bound $k^{21}_{121\, out}(x)$ we notice that $\frac{(\pa_x\Delta h')^2+\Delta \pa^2_x\Delta h'}{\dl^2}\precsim C_2 |y|^{-3}$ and  we can directly apply \ref{121out} with $a=2$ to
the second term other part. To bound $k^{21}_{121\,in}(x)$ we split it  into  two new terms,
\begin{align*}
&k^{21}_{121\, in}(x)=-\int_{|y|<1}\left(\frac{y((\pa_x\Delta h') ^2+\Delta h' \pa_x^2\Delta h')-y^3((\pa^2_xh')^2+\pa_xh'\pa^3_xh')}{\dl^2}\right)\pa^4_x h'(x-y)dy\\
&-((\pa^2_xh')^2+\pa_xh'\pa^3_xh')\int_{\R}y^3\left(\frac{1}{\dl^2}-\frac{1}{\dlx^2}\right)\pa^4_x h'(x-y)dy\\
&\sim -\int_{|y|<1}\left(\frac{y((\pa_x\Delta h')^2+\Delta h' \pa_x^2\Delta h')-y^3((\pa^2_xh')^2+\pa_xh'\pa^3_xh')}{\dl^2}\right)\pa^4_x h'(x-y)dy,
\end{align*}
where we have applied lemma \ref{121in}, with $a=2$. Therefore, applying lemma \ref{In} , with $a=2$ we check that
\begin{align*}
&k^{21}_{121\,in}(x)\precsim \int_{|y|<1}\frac{C_3 |y|^4+C_{3+\alpha}|y|^{3+\alpha}}{\dl^2}|\pa^4_x h'(x-y)|dy\\ & \sim \int_{|y|<1}\frac{C_{3+\alpha}|y|^{3+\alpha}}{\dl^2}|\pa^4_x h'(x-y)|dy\precsim \int_{|y|<1}|y|^{-1+\alpha}|\pa^4_x h'(x-y)|dy.
\end{align*}
Thus, we can apply lemma \ref{yelambda} to finish the estimate of $k^{21}_{121\, in}(x)$.

To bound $k^{21}_{122}(x)$ we apply lemma \ref{hilbert} with $a=2$. This finishes the bound for $k^{21}_1(x)$.

To bound $k^{21}_2(x)$ in $L^2$ is enough to bound $\ok^{21}_2(x)\equiv \int_{\R}k^{21}_\lambda(x,y)dy$ in $L^\infty$. The outer part is estimated as we did for $k^{21}_1$. For the inner part we split $\ok^{21}_{2\, in}(x)\equiv \ok^{21}_{21\, in}(x)+\ok^{21}_{22\,in}$, with $\ok^{21}_{21\,in}$ as $k^{22}_{21\,in}(x)$ in \eqref{k2111in} and $\ok^{21}_{22\, in}(x)$ as $k^{21}_{12\, in}(x)$ in \eqref{k2112in} but replacing $\pa^4_xh'(x-y)$ by 1. $\ok^{21}_{21\,in}(x)$ is bounded by applying lemma \ref{In}. To bound $\ok^{21}_{22}(x)$ we split this term into  $\ok^{21}_{221}(x)+\ok^{21}_{221}$ (analogous to $k^{21}_{121}(x)$ and $k^{21}_{122}(x)$ in \eqref{k2112split}). $\ok^{21}_{221\, out}(x)$ is bounded by using lemmas \ref{yelambda} and \ref{121out}. For $\ok^{21}_{221\,in}(x)$ we do the analogous splitting than for $k^{21}_{121\,in}$ and we apply the same argument together with lemma \ref{In}. To bound $\ok^{21}_{222}(x)$ we use lemma \ref{hilbert}. This finishes the estimate of $k^{21}(x)$ in $L^\infty$ and  completes the proof of the estimate of $k^{21}(x)$ in $L^2$.

To bound $k^{21}_{12}(x)$ we write
\begin{align}\label{k2112splitb}
k^{21}_{12}(x)=&-\int_{\R}\left(\frac{y\left((\pa_x\Delta h')^2+\Delta \pa^2_x\Delta h'\right)}{\dl^2}-\frac{y^3\left((\pa^2_xh')^2+\pa_x h'\pa^3_x h\right)}{\dlx^2}\right)\pa_x^4h'(x-y)dy\nonumber\\
&-\left((\pa^2_xh')^2+\pa_xh'\pa^3_xh'\right)\int_{\R}\frac{y^3}{\dlx^2}\pa^4_xh'(x-y)dy\nonumber\\
&\equiv k^{21}_{121}(x)+k^{21}_{122}(x).
\end{align}

To bound $k^{21}_{121\, out}(x)$ we notice that $\frac{(\pa_x\Delta h')^2+\Delta \pa^2_x\Delta h'}{\dl^2}\precsim C_2 |y|^{-3}$ and  we can directly apply \ref{121out} with $a=2$ to
the second term. To bound $k^{21}_{121\,in}(x)$ we split it  into  two new terms,
\begin{align*}
&k^{21}_{121\, in}(x)=-\int_{|y|<1}\left(\frac{y((\pa_x\Delta h') ^2+\Delta h' \pa_x^2\Delta h')-y^3((\pa^2_xh')^2+\pa_xh'\pa^3_xh')}{\dl^2}\right)\pa^4_x h'(x-y)dy\\
&-((\pa^2_xh')^2+\pa_xh'\pa^3_xh')\int_{\R}y^3\left(\frac{1}{\dl^2}-\frac{1}{\dlx^2}\right)\pa^4_x h'(x-y)dy\\
&\sim -\int_{|y|<1}\left(\frac{y((\pa_x\Delta h')^2+\Delta h' \pa_x^2\Delta h')-y^3((\pa^2_xh')^2+\pa_xh'\pa^3_xh')}{\dl^2}\right)\pa^4_x h'(x-y)dy,
\end{align*}
where we have applied lemma \ref{121in}, with $a=2$. Therefore, applying lemma \ref{In} , with $a=2$ we check that
\begin{align*}
&k^{21}_{121\,in}(x)\precsim \int_{|y|<1}\frac{C_3 |y|^4+C_{3+\alpha}|y|^{3+\alpha}}{\dl^2}|\pa^4_x h'(x-y)|dy\\ & \sim \int_{|y|<1}\frac{C_{3+\alpha}|y|^{3+\alpha}}{\dl^2}|\pa^4_x h'(x-y)|dy\precsim \int_{|y|<1}|y|^{-1+\alpha}|\pa^4_x h'(x-y)|dy.
\end{align*}
Thus, we can apply lemma \ref{yelambda} to finish the estimate of $k^{21}_{121\, in}(x)$.

To bound $k^{21}_{122}(x)$ we apply lemma \ref{hilbert} with $a=2$. This finishes the bound for $k^{21}_1(x)$.

To bound $k^{21}_2(x)$ in $L^2$, it  is enough to bound $\ok^{21}_2(x)\equiv \int_{\R}k^{21}_\lambda(x,y)dy$ in $L^\infty$. The outer part is estimated as we did for $k^{21}_1$. For the inner part we split $\ok^{21}_{2\, in}(x)\equiv \ok^{21}_{21\, in}(x)+\ok^{21}_{22\,in}$, with $\ok^{21}_{21\,in}$ as $k^{22}_{21\,in}(x)$ in \eqref{k2111in} and $\ok^{21}_{22\, in}(x)$ as $k^{21}_{12\, in}(x)$ in \eqref{k2112in} but replacing $\pa^4_xh'(x-y)$ by 1. $\ok^{21}_{21\,in}(x)$ is bounded by applying lemma \ref{In}. To bound $\ok^{21}_{22}(x)$ we split this term into  $\ok^{21}_{221}(x)+\ok^{21}_{221}$ (analogous to $k^{21}_{121}(x)$ and $k^{21}_{122}(x)$ in \eqref{k2112splitb}). $\ok^{21}_{221\, out}(x)$ is bounded by using lemmas \ref{yelambda} and \ref{121out}. For $\ok^{21}_{221\,in}(x)$ we do an analogous splitting to that  for $k^{21}_{121\,in}$ and we apply the same argument together with lemma \ref{In}. To bound $\ok^{21}_{222}(x)$ we use lemma \ref{hilbert}. This finishes the estimate of $k^{21}(x)$ in $L^\infty$ and  completes the proof of the estimate of $k^{21}(x)$ in $L^2$.

\textbf{1.1.2.} Let us estimate $k^{22}(x)$. We split into two terms \begin{align}\label{k22} k^{22}(x)=k^{22}_1(x)+k^{22}_2(x),\end{align}
with
\begin{align*}
k^{22}_1(x)=-\int_{\R}k^{22}_\theta (x,y)\pa^4_x h'(x-y)dy,\quad{\text{and}} && k^{22}_2=\pa^4_x h \int_{\R}k^{22}_\theta(x,y)dy.
\end{align*}
We split $k^{22}_1(x)$ in four terms, $k^{22}_1(x)=k^{22}_{11}(x)+k^{22}_{12}(x)+k^{22}_{13}+k^{22}_{14}$, with
\begin{align}
\label{k2211}&k^{22}_{11}(x)=-\int_{\R}y\frac{(\Delta \pa_x h')^2(\gamma^2+2\gamma \Delta h')
 }{(y^2+\theta^2)^3}\pa^4_x h'(x-y)dy\\
\label{k2212}&k^{22}_{12}(x)=-\int_{\R}y\frac{(\Delta \pa_x h')^2(\Delta h')^2}{(y^2+\theta^2)^3}\pa_x^4h'(x-y)dy.\\
\label{k2213}&k^{22}_{13}(x)=-\int_{\R}y\frac{(2\pa_x \gamma \Delta \pa_x h'+\pa_x \gamma^2)(\Delta h')^2 }{(y^2+\theta^2)^3}\pa^4_x h'(x-y)dy\\
\label{k2214}&k^{22}_{14}(x)=-\int_{\R}y\frac{((\pa_x \gamma)^2+2\pa_x \gamma \Delta \pa_x h')(\gamma^2+2\gamma \Delta h')
 }{(y^2+\theta^2)^3}\pa^4_x h'(x-y)dy.
\end{align}
The function $k^{22}_{11}(x)$ can be bounded in $L^2$ as follows. The integrand of $k^{22}_{11}(x) \precsim C_1|y|^{-5}|\pa^4_x h'(x-y)|$. Then $k^{22}_{11\,out}(x)$ is estimated by lemma \ref{yelambda}. Also the integrand of
 $k^{22}_{11}(x)\precsim C_2\frac{y^3(\gamma^2+2|\gamma|y)}{(y^3+(\theta)^2)^3}|\pa^4_x h'(x-y)|$ and we can apply lemma \ref{In} with $a=3$ to bound $k^{22}_{11\, in}(x)$. Similarly, we can deal with $k^{22}_{13}$ and $k^{22}_{14}$ as we can obtain the correct estimates in powers of $|y|$ and $|\gamma|$ to apply Lemma
 \ref{In}.

To bound $k^{22}_{12}(x)$ we write
\begin{align}\label{k2212split}
&k^{22}_{12}(x)=-\int_{\R}\left(\frac{y(\Delta \pa_x h')^ 2 (\Delta h')^2}{\left(y^2+\theta^2\right)^3}-\frac{(\pa^2_x h')^2(\pa_x h')^2 y^5}{\left(y^2+\theta_{lin}^2\right)^3}\right)\pa^4_x h'(x-y) dy\nonumber\\
&-(\pa_x h')^2(\pa^2_x h')^2 \int_{\R}\frac{y^5}{\left(y^2+\theta_{lin}^2\right)^3}\pa_x^4h'(x-y)\nonumber\\
&\equiv k^{22}_{121}(x)+k^{22}_{122}(x).
\end{align}

To bound $k^{22}_{121,\, out}(x)$ we notice that $\frac{y(\Delta \pa_x h')^2(\Delta h')^2}{\left(y^2+(\theta)^2\right)^3}\precsim C_1 |y|^{-5}$ and  we can apply \ref{121out} to the other part with $a=3$. To bound $k^{22}_{121,in}(x)$ we split in two terms,
\begin{align*}
k^{22}_{121,\,in}&=\int_{|y|<1}\left(\frac{y(\Delta h')^2 (\Delta \pa_x h')^2 -y^5(\pa_x h)^2(\pa^2_x h)^2}{\left(y^2+(\theta)^2\right)^3}\right)\pa^4_x h'(x-y)dy\\
&+(\pa_x h')^2(\pa^2_x h')^2\int_{|y|<1}y^5\left(\frac{1}{\left(y^2+\theta^2\right)^3}-\frac{1}{\left(y^2+\theta_{lin}^2\right)^3}\right)\pa^4_x h'(x-y)dy,
\end{align*}
in such a way that we can apply lemma \ref{In} with $a=3$, since $|y(\Delta h')^2 \Delta (\pa_x h')^2 -y^5(\pa_x h')^2(\pa^2_x h')^2|\precsim C_3 y^6$, and lemma \ref{121in} with $a=3$.

To bound $k^{22}_{122}(x)$ we  apply lemma \ref{hilbert} with $a=3$.
This finishes the bound for $k^{22}_1(x)$.The term $k^{22}_2(x)$is estimated in a similar way to $k^{21}_2$.
We have completed the proof of the estimate of $k^{22}(x)$ in $L^2$.

\textbf{1.2. Terms in \eqref{easyb} with $j=3$ and $i=1$.}

Let us bound $k^{31}(x)$. Unfortunately the proof of the estimation of $k^{31}(x)$ does not follow the same steps than the rest of the functions $k^{i}_{j}(x)$. Indeed we need to do something different and use the boundedness of pseudodifferential operators used in the body of the text.

We split into two terms $k^{31}(x)=k^{31u}(x)+k^{31d}_2(x)$
with
\begin{align*}
&k^{31u}(x)=\int_{\R}y\frac{\pa_x \theta \pa^2_x \theta}{\dl^2}\pa^3_x \Delta h' dy\\ & k^{31d}(x)=\int_{\R}y\frac{(\theta)\pa^3_x \theta}{\dl^2} \pa^3_x \Delta h' dy.
\end{align*}
The proof for $k^{31u}(x)$ will be similar to the above.

 For $k^{31d}(x)$ we need new estimates since as $h \in H^4$ we can not bound $\pa^3_x  \theta$ by $|y|$ uniformly.

  Let us bound $k^{31d}(x)$ first. We split this function in four  parts,
\begin{align}\label{k31d}
& k^{31d}_1(x)=  \int_{\R}y\frac{\gamma \pa^3_x  h'}{\dl^2} \pa^3_x \Delta h' dy,\quad \text{and} \\
& k^{31d}_{2}=\int_{\R}y\frac{ \Delta h'  \pa^3_x \Delta h'}{\dl^2}\pa^3_x \Delta h' dy \quad \text{and} \\
& k^{31d}_{3}=\int_{\R}y\frac{ \Delta h'  \pa^3_x \gamma}{\dl^2}\pa^3_x \Delta h' dy \\
&k^{31d}_4(x)=  \int_{\R}y\frac{\gamma \pa^3_x  \gamma }{\dl^2} \pa^3_x \Delta h' dy.\quad \text{and}
\end{align}

The last two terms are easily bounded by Lemma~\ref{In} for the in part and by Lemma~\ref{yelambda} for the outer part.

To bound $k^{31d}_{2}(x)$ we use that $\Delta (\pa_x^3h )^2=2\pa^3_x h \Delta \pa^3_x h - \Delta \left((\pa^3_x h)^2\right)$, and then, we need to show that the integral
\begin{align}\label{N}
N(x)=\int_{\R}\frac{y\Delta h'}{\dl^2}\Delta gdy
\end{align}
is in $L^2$ with either $g=\pa^3_x h'$ or $(\pa^3_x h')^2$. Notice that, in both cases, we can allow in our estimates that $||g||_{H^1}$ appears. We will split $N(x)$ in two terms $N_1(x)$ and $N_2(x)$ with
\begin{align}
&N_1(x)=\int_{\R}y\left(\frac{\Delta h'}{\dl^2}-\frac{\pa_x h' y}{\dlx^2}\right)\Delta g dy,\label{N1}\\
&N_2(x)=\pa_x h'\int_{\R}\frac{y^2}{\dlx^2}\Delta g dy \label{N2}.
\end{align}
That is $N_1$ compares with the linearized version and $N_2$ treats with the linearized kernel.
We can not deal directly with  $N_1$ with our previous lemmas by replacing $\Delta h'$ by $y$ as the denominator is too singular. However
 we can add an subtract a term

\[  \int \frac{\frac{1}{2}\pa^2_x h' y^2}{\dlx^2} \Delta g  dy. \]

Then several terms appear but since $h \in H^4$ we have that

\begin{equation}\Delta h'-\pa_x h'y-\partial_x^2 h'(x) y^2 \le |y|^3.
\end{equation}

After splitting in various terms, all of them can be deal with  using our lemmas ~\ref{121in}, \ref{In}, \ref{hilbert} and a small modification.

In order to deal with $N_2(x)$ we need to introduce another new idea.  We first use \eqref{sigmaidentity} and treat the two terms in
$\Delta g$ separately.
proceed as follows
\begin{align*}
N_2(x)=&\pa_x h'\gamma\sigma_h^2 \left(g(x)\int_{\R}\frac{y^2}{((y+\sigma_h A_h \gamma)^2+\sigma^2\gamma^2)^2}dy\right.\nonumber \\ &\left.\qquad\qquad-\int_{\R}\frac{y^2}{((y+\sigma_h A_h \gamma)^2+\sigma_h^2\gamma^2)^2}g(x-y)dy\right)
\end{align*}
Now we can compute that
\begin{align*}
\int_{\R}\frac{y^2}{((y+\sigma_h A_h \gamma)^2+\sigma_h^2\gamma^2)^2}dy=\frac{\pi}{2\sigma_h^2|\gamma|}
\end{align*}

For the convolution term,   the following Fourier transform computation

\begin{equation}\begin{aligned} &\mathcal{F}\left[\frac{y^2}{\left((y+\sigma A \lambda)^2+\sigma^2\lambda^2\right)^2}\right](\xi)\\
&=\frac{\pi}{2\sigma^2|\lambda|}e^{2\pi i A\sigma \xi \lambda}e^{-2\pi\sigma |\lambda||\xi|}\left(1+2\pi\sigma^2(A^2-1)+4\pi iA\sigma^2\xi\lambda\right)\end{aligned}\end{equation}
yields that
\begin{align*}
&\int_{\R}\frac{y^2}{((y+\sigma_h A_h \gamma)^2+\sigma_h^2\gamma^2)^2}g(x-y)dy\\
&= \frac{\pi}{2\sigma_h^2|\gamma|}\int_{\R}e^{2\pi i\xi x}\hat{g}(\xi)e^{2\pi i \xi A_h\sigma_h \gamma}e^{-2\pi \sigma_h |\gamma||\xi|}\\& \qquad\qquad \qquad \times(1+2\pi\sigma_h^2|\gamma||\xi|(A^2-1)+4\pi i A\sigma_h^2 \xi \gamma)d\xi,
\end{align*}
so that

\[ N_2(x)=\pa_x h'\frac{\pi}{2}\Op(p)(\Lambda g) \]

where the symbol is given by,
\[\begin{aligned}  p(x,\xi)= &\frac{1}{|\xi||\gamma|}\left(1-e^{2\pi i A_h\sigma_h \xi\gamma}e^{-2\pi \sigma_h |\xi||\gamma|}\right)\\
&-\sigma_h^2\pi^2 e^{2\pi i A_h\sigma_h \xi\gamma}e^{-2\pi \sigma_h |\xi||\gamma|}((A_h^2-1)+2i \sign(\xi)\sign(\gamma))\end{aligned} \]

Therefore by applying lemma \ref{Hwan} we obtain that the $L^2-$norm of $N_2(x)$ is bounded by $\lar (||\Lambda g||_{L^2}+||\pa_xg||_{L^2})$.

This concludes the proof of the estimate of the $L^2-$norm of $k^{31d}_2(x)$.

To deal with $k_1^{31d}$, we use again that $(\Delta \pa_x^3 h')^2=2\pa^3_x h' \Delta \pa^3_x h' - \Delta \left((\pa^3_x h')^2\right)$. Thus,  it suffices  to bound the integral
\begin{align}\label{M}
M(x)=\int_{\R}\frac{y\gamma}{(y^2+(\theta)^2)^2} \Delta g(x-y) dy
\end{align}
in terms of the $H^1$-norm of $g$.  The proof is analogous since at the only delicate point it holds that

$$\int_\R\frac{y\gamma}{\dlx^2}dy = -\frac{A\pi}{2\sigma^2 |\gamma|},$$

\textbf{1.3.The rest of the terms in \eqref{easyb}.}

The estimation of the rest of the terms in \eqref{easyb} follow the same steps than the estimation for either $k^{21}(x)$ or $k^{22}(x)$.

\subsubsection{Estimation of the terms in \eqref{safe}.}
We will  show how to estimate is $k^{11}(x)$ in  \eqref{safe}, since $k^{55}(x)$ is analogous. Here we recall that we are concerned with $||D^{-1} k^{11}||_{L^2}$. In order to bound this norm we will proceed as follows
\begin{align*}
k^{11}(x)=\int_{\R}k^{11}_\theta(x,y)\pa^5_x \theta dy=\int_{\R}k^{11}_\theta (x,y)D\Theta dy,
\end{align*}
where $\Theta \equiv =D^{-1}\pa^5_x\theta$ (we clarify that the operator $D=(1+t\pa_x)$ acts on $x$ rather than $y$). Then we would like to estimate $||D^{-1} k^{11}||_{L^2}\leq \lar ||\Theta||_{L^2}.$
In order to do it we notice that
\begin{align*}
\pa_x \int_{\R} k^{11}_\theta(x,y) \Theta dy =\int_{\R}\pa_x k^{11}_\theta(x,y) \Theta dy +\int_{\R}k^{11}_\theta(x,y)\pa_x  \Theta dy,
\end{align*}
so that
\begin{align*}
&D^{-1}k^{11}(x)= \int_{\R} k^{11}_\theta(x,y) \Theta dy-tD^{-1}\int_{\R}\pa_x k^{11}_\theta(x,y) \Theta dy\\ &\equiv S_1(x)+tD^{-1}S_2(x).
\end{align*}
Happily,  the proof of the estimation for $S_1$ and $S_2$ follow the same steps that the estimation of $k^{22}(x)$ in $\eqref{k22}$. Thus we have proven \eqref{loquesea}. That is

$$\pa^5_x\mathcal{M} u =-\frac{1}{4\pi}\int_{-1}^1\int_{-1}^1 \int_{\R} k_{\theta}(\cdot,y)\pa^6_x \theta dy d\lambda'd\lambda+ l.o.t.$$

In order to finish the proof of lemma \ref{ordenmasalto} notice that \[ k_\theta \partial_x^6\theta= k_\theta \pa^6_x \Delta f(x,x-y)+ k_\theta  \left(\pa_x^6\ep(x)\lambda-\pa_x^6\ep(x-y)\lambda'\right) \]
and, by assumptions on $c(x,t)$, $\pa_x^6c(\cdot,t)\in L^2$ uniformly in $t$. We then have that

$$\pa^5_x\mathcal{M} u =-\frac{1}{4\pi}\int_{-1}^1\int_{-1}^1 \int_{\R} k_{\theta}(\cdot,y)\pa^6_x \Delta f(x,x-y) dy d\lambda'd\lambda+ l.o.t.$$

Lemma  \ref{ordenmasalto}  is proved.

\subsection{Proof of lemma \ref{descomposicion}}\label{apendice4}
In this section we will prove lemma \ref{descomposicion}. We will use the same convection as in the previous section.

Since the transport term in lemma \ref{descomposicion} arises in an obvious way,
t he main issue is  to linearize $\theta$ to $\theta_{lin}$ in the integral
$$\int_{-1}^1\int_{-1}^1\int_{\R}\frac{y}{y^2+\theta^2}\pa^6_x f(x-y)dyd\lambda \lambda'.$$

This is the content of the following $L^2$ estimate.

\begin{lemma}Let $f\in H^6$, $c$ as in theorem \ref{existencialocal} and
\begin{align*}
& F(x)=D^{-1}\frac{1}{4}\int_{-1}^1\int_{-1}^1\int_{\R} \left(\frac{y}{y^2+\theta^2}-\frac{y}{y^2+\theta_{lin}^2}\right)\pa^6_x f(x-y)dyd\lambda \lambda'.
\end{align*}
Then,
\[ \|F\|_{L^2}\leq  \lar ||D^{-1}\pa^5_x f||_{L^2}. \]
\end{lemma}

After applying Minkowski inequality, to obtain the estimate, it is enough to show that
\begin{align*}
D^{-1}\int_{\R} \left(\frac{y}{y^2+\theta^2}-\frac{y}{y^2+\theta_{lin}^2}\right)\pa^6_x f(x-y)dy
\end{align*}
is in $L^2$ with $L^2-$norm bounded by $\lar \left(||D^{-1}\pa^5_x f||_{L^2}+1\right)$ uniformly in $t$ (for small $t$), $\lambda$ and $\lambda'$.

Let us call $g(x)=D^{-1}\pa^5_x f(x)$. Then we proceed as when investigating the commutators $[D^{-1},\Op(p)]$ but this time working directly with the
kernel
\[k(x,y)=\left(\frac{y}{y^2+\theta^2}-\frac{y}{y^2+\theta_{lin}^2}\right).\]
Then we have to estimate the function
\begin{align*}
P(x)=D^{-1}\int_{\R} k(x,y)D \pa_x g(x-y)dy.
\end{align*}

By direct application of the definition of $D$, it holds that
$\int k(x,y) D\pa_xg(x-y)dy=D\int k(x,y)\pa_x g(x-y)dy-t\int  \pa_xk(x,y)\pa_x g(x-y)dy$ we have
\begin{align*}
P(x)= & \int_{\R} k(x,y) \pa_x g(x-y)dy-tD^{-1} \int_{\R} \pa_xk(x,y) \pa_x g(x-y)dy\\
\equiv & M(x)+tD^{-1}L(x),
\end{align*}
We need to estimate both $M(x)$ and $L(x)$ in $L^2$.

To bound $M(x)$ we first integrate by parts, recalling that $\pa_x g(x-y)=-\pa_y g(x-y)$,
\begin{align}\label{Mterm}
&M(x)=\int_{\R} \pa_y\left(\frac{y}{y^2+\theta^2}-\frac{y}{y^2+\theta_{lin}^2}\right)g(x-y)dy\nonumber\\
&=\int_{\R} \left(\frac{1}{y^2+\theta^2}-\frac{1}{y^2+\theta_{lin}^2}\right)g(x-y)dy\nonumber\\
&+2\int_{\R}y\left(\frac{y+\theta_{lin}\pa_x h'}{\dlx^2}-\frac{y+\theta\pa_x h'(x-y)}{\dl^2}\right)g(x-y) dy\nonumber\\
&\equiv M_1(x)+2M_2(x).
\end{align}

As a matter of fact both $M_1, M_2$ can be  estimated by means our previous lemmas. The outer integrals  can be estimated by brute force as the kernels
decay fast enough. The inner ones,  by using the first and second Taylor polynomial of $h'$, can be split  into terms with the appropriate powers
of $y$ in order to apply   Lemmas~\ref{121in}, \ref {hilbert} as before.

It remains to bound $L(x)$. This function is given by
\begin{align*}
&L(x)=-2\int_{\R}y\left(\frac{\theta\pa_x\theta}{\dl^2}-\frac{\theta_{lin}(\pa^2_x h'y+\pa_x\gamma)}{\dlx^2}\right)\pa_x g(x-y)dy\\
&=-2\int_{\R}y\left(\frac{(\Delta h'\Delta \pa_x h+\gamma \Delta \pa_x h')}{\dl^2}-\frac{\pa^2_x h'\pa_x h y^2+\gamma\pa^2_x h' y }{\dlx^2}\right)\pa_x g(x-y)dy\\
&-2\int_{\R}y\left(\frac{\pa_x\gamma\Delta h'}{\dl^2}-\frac{\pa_x \gamma \pa_xh' y }{\dlx^2}\right)\pa_x g(x-y)dy\\
&-2\int_{\R}y\left(\frac{\gamma\pa_x\gamma}{\dl^2}-\frac{\gamma\pa_x\gamma }{\dlx^2}\right)\pa_x g(x-y)dy
\\ &\equiv
S(x)+ \tilde{S}(x)+\overline{S}(x).
\end{align*}

Firstly, we  will carefully  bound $S(x)$ since the numerators of the terms $\tilde{S}$ and $\overline{S}$ have the same behaviour.

We repeat the trick of observing that  $\pa_x g(x-y)=-\pa_y g(x-y)$ to integrate by parts and obtain that
\begin{align*}
&S(x)\\&=-2\int_{\R}\left(\frac{(\Delta h'+\gamma)\pa_x\Delta h'}{\dl^2}-\frac{\theta_{lin}\pa^2_x h'y}{\dlx^2}\right) g(x-y)dy\\
&-2\int_{\R}y\left(\frac{\pa_x h(x-y)\pa_x\Delta h'+(\Delta h'+\gamma)\pa^2_x h'(x-y)}{\dl^2}\right.\\ &\qquad\left.-\frac{\pa_xh' \pa^2_x h'y+(\theta_{lin})\pa^2_x h'}{\dlx^2}\right) g(x-y)dy\\
&+8\int_{\R}y\left(\frac{\theta\pa_x\Delta h'(y^2+\theta^2)(y+(\Delta h'+\gamma))\pa^2_x h'(x-y)}{\dl^2} \right.\\
&\qquad \left. -\frac{\theta_{lin}\pa_x^2h' y (y^2+\theta_{lin}^2)(y+\theta_{lin})\pa^2_xh'}{\dlx^2}\right)g(x-y)dy\\
&\equiv -2S_1(x)-2S_2(x)+8S_3(x).
\end{align*}

To bound $S_1(x)$ we split it in the following way
\begin{align}\label{S1}
&S_1(x)=\int_{\R}\frac{\theta\pa_x\Delta h'-\theta_{lin}\pa^2_x h' y}{\dl^2}g(x-y)dy\nonumber\\
&+\int_{\R}(\theta_{lin})\pa^2_x h' y \left(\frac{1}{\dl^2}-\frac{1}{\dlx^2}\right)g(x-y)dy\nonumber\\
&\equiv S_{11}(x)+S_{12}(x).
\end{align}
To bound $S_{11}(x)$ we split into two terms
\begin{align*}
&S_{11}(x)\\&=\int_{\R}\frac{\Delta h' \pa_x\Delta h' -\pa_x h'\pa^2_xh' y^2}{\dl^2}g(x-y)dy+\int_{\R} \frac{\gamma(\pa_x\Delta h' -\pa^2_x h'y)}{\dl^2}g(x-y)dy\\
\sim &\int_{\R}\frac{\Delta h' \pa_x\Delta h' -\pa_x h'\pa^2_xh' y^2}{\dl^2}g(x-y)dy.
\end{align*}
The last line follows from the bound $(\pa_x\Delta h'-\pa^2_x h' y)\precsim C_2(|y|+1)$ and  lemma~\ref{In}.

The next term is more complicated as in principle the numerator scales as $y^2$ which is not enough to apply our lemmas.
As before we Taylor $\Delta_h$ up to second order and differentiate
to obtain $\Delta h' \pa_x \Delta h'=pa_x h' \pa^2_x h' y^2+G(\pa^j_x h')|y|^3+ C|y|^\alpha$. Being
explicit,

\[G=\pa_x h' \pa_x^3 h'+(\pa_x^2 h')^2+\pa^2_x h' \pa^3_x h')  \]

which is uniformly bounded in $x$ since $h' \in H^4$.
Since terms of the type

\[ \int_{R}  \frac{G(h')}{(y^2 +\theta_{lin}^2)^2} g(x-y) \]

are estimated like our terms $k^{ji}(x)$ we can subtract them freely. Hence

\[ |S_{11in} (x)| \le  \int  \frac{|y|^{3+\alpha}} {(y^2+\theta^2)} dy + G(h')(x) \int y^3 \left(\frac{1}{(y^2 +\theta_{lin}^2)^2} -\frac{1}{(y^2 +\theta^2)^2}\right) .\]

$S_{12}(x)$  is easy by now. It
can be estimated as $M_2(x)$ in \eqref{Mterm}. This finishes the estimation of $S_1(x)$.

To bound $S_2(x)$ we split it into two terms
\begin{align}\label{S2}
&S_2(x)\nonumber\\&=\int_{\R}y\left(\frac{\pa_xh'(x-y)\pa_x\Delta h' +\theta\pa^2_xh'(x-y)-\pa_x h' \pa^2_x h' y -(\pa_xh' y +\gamma)\pa^2_x h'}{\dl^2}\right)g(x-y)dy\nonumber\\
+& \int_{\R}y\left(\pa_x h' \pa^2_x h'y+(\pa_xh' y +\gamma)\pa^2_x h'\right)\left(\frac{1}{\dl^2}-\frac{1}{\dlx}\right)g(x-y)dy\nonumber\\
\equiv & S_{21}(x)+S_{22}(x).
\end{align}

The term $S_{22}$ has the correct behaviour in powers of $y$ and $\gamma$ to deal with
them $S_{21}(x)$ we add freely a term

\[ \frac{1}{2} \pa_x^3 h'(x) \int \frac{y^3} {(y^2+\theta_{lin}^2)} dy \]
and proceed exactly as with $S_{11}$.

Finally, it remains to bound $S_3(x).$  Since the computation are longer but no new idea is needed we skip the details.

Then we have achieved the conclusion of lemma \ref{descomposicion}.

\subsection{Estimates for the velocity}\label{eftv}

\begin{lemma}\label{veloLinftyb} Let $\ub$ be like in expression \eqref{velo} with $f$ and $\ep=ct$ in theorem \ref{existencialocal}. Then $\ub\in L^\infty(\R^2)$ and
$$||\ub(\cdot,t)||_{L^\infty(\R^2)}\leq P(||f||_{H^4})$$
for some smooth function $P$.
\end{lemma}

\begin{proof}

In this proof $C$ stands for a constant that may depend on $||f||_{H^4}$ and on the regularity of $c(x,t)$. The velocity in \eqref{velo} reads
\begin{align*}
\ub(\xb)=\frac{1}{\pi}P.V.\int_{\R}\frac{1}{2}\int_{-1}^1 \frac{x_1-y}{|(x_1-y, x_2-f(y)-\ep(y)\lambda')|^2} (1,\pa_x f(y)+\pa_x\ep(y)\lambda')d\lambda' dy.
\end{align*}
And evaluating in at $(x,f(x)+\lambda), (x,\lambda) \in \mathbb{R}^2$, we have that
\begin{align*}
\ub(x, \lambda+f(x))=\frac{1}{\pi}P.V.\int_{\R}\frac{1}{2}\int_{-1}^1 \frac{x-y}{|(x-y, \Delta f(x,y)+(\lambda-\ep(y)\lambda'))|^2}(1,\pa_x, f(y)+\pa_x\ep(y)\lambda')d\lambda'dx'
\end{align*}
with $\Delta f(x,y)=f(x)-f(y)$. Next we check that the integral
\begin{align}\label{valor}
I(x)=&P.V.\int_{\R}\frac{x-y}{(x-y)^2+(\Delta f(x,y)+\lambda-\ep(y)\lambda')^2}dy\nonumber\\&
=P.V.\int_{\R}\frac{y}{(y^2+(\Delta f(x,x-y)+\lambda-\ep(x-y)\lambda')^2}dy
\end{align}
belongs to $L^\infty(dx)$ uniformly in $\lambda\in \R$ and $\lambda'\in [-1,1]$. In order to do it we split \eqref{valor} into two parts
\begin{align*}
&I_1(x)=P.V.\int_{\R}\frac{y}{y^2+(\Delta f(x,x-y)+\lambda-\ep(x-y)\lambda')^2}-\frac{y}{y^2+((\pa_xf(x)+\pa_x\ep(x)\lambda')y+\lambda-\ep(x)\lambda')^2}dy\\
&I_2(x)=P.V.\int_{\R}\frac{y}{y^2+((\pa_x f(x)+\pa_x\ep(x)\lambda')y+\lambda-\ep(x)\lambda')^2}dy.
\end{align*}
We will denote
\begin{align*}
A_\lambda'=\pa_xf(x)+\pa_x\ep(x)\lambda',&& \sigma_{\lambda'}=\frac{1}{1+A_{\lambda'}^2}, && \gamma=\lambda+\ep(x)\lambda'.
\end{align*}

Thus
\begin{align*}
&I_{2}(x)=\sigma_{\lambda'}P.V.\int_{\R}\frac{y}{(y+\sigma_{\lambda'}
A_\lambda'\gamma)^2+\gamma^2\sigma_{\lambda'}^2}dy\\
&=\sigma_{\lambda'}P.V.\int_{\R}\frac{y+\sigma_{\lambda'}A_{\lambda'}\gamma}{(y+\sigma_{\lambda'}A_{\lambda'}\gamma)^2+\gamma^2\sigma_{\lambda'}^2}dy
-\sigma_{\lambda'}\int_{\R}\frac{\sigma_{\lambda'}A_{\lambda'}\gamma}{(y+\sigma_{\lambda'}A_{\lambda'}\gamma)^2+\gamma^2\sigma_{\lambda'}^2}dy.
\end{align*}
The first integral on the right hand side of the previous equation is equal to zero. The second one is a bounded integral for every value of $\gamma\in \R$ and $\lambda'\in [-1,1]$.

In order to bound $I_1(x)$ we split it into two terms
\begin{align*}
&I_{11}(x)=\int_{|y|<1}\frac{y}{y^2+(\Delta f(x,x-y)+\lambda-\ep(x-y)\lambda')^2}-\frac{y}{y^2+(A_{\lambda'}y+\gamma)^2}dy\\
&I_{12}(x)=P.V.\int_{|y|>1}\frac{y}{y^2+(\Delta f(x,x-y)+\lambda-\ep(x-y)\lambda')^2}-\frac{y}{y^2+(A_{\lambda'}y+\gamma)^2}dy.
\end{align*}
To bound $I_{12}(x)$ we consider $I_{121}(x)$ and $I_{122}(x)$ with
\begin{align*}
&I_{121}(x)=P.V.\int_{|y|>1}\frac{y}{y^2+(\Delta f(x,x-y)+\lambda-\ep(x-y)\lambda')^2}dy\\&=P.V.\int_{|y|>1}\left(\frac{y}{y^2+(\Delta f(x,x-y)+\lambda-\ep(x-y)\lambda')^2}-\frac{y}{y^2+\lambda^2}\right)dy
\end{align*}
and
\begin{align*}
&I_{122}(x)=P.V.\int_{|y|>1}\frac{y}{y^2+(A_{\lambda'}y+\gamma)^2}dy\\ &=P.V.\int_{|y|>1}\left(\frac{y}{y^2+(A_{\lambda'}y+\gamma)^2}-\frac{y}{(1+A_{\lambda'}^2)y^2+\gamma^2}\right)dy.
\end{align*}
Then
\begin{align*}
|I_{121}(x)|\leq C\int_{|y|>1}\frac{(1+|\lambda|)}{|y|(y^2+\lambda^2)}dy\leq C,
\end{align*}
and
\begin{align*}
|I_{122}(x)|\leq C \int_{|y|>1}\frac{|\gamma|}{(y^2+\gamma^2)}dy\leq C.
\end{align*}
To bound $I_{11}(x)$ we notice that
\begin{align*}
&(A_{\lambda'}y+\gamma)^2-(\Delta f(x,x-y)+\lambda-\ep(x-y)\lambda')^2\\&
=(A_{\lambda'}y +\gamma-\Delta f(x,x-y)-\lambda+\ep(x-y)\lambda')(A_{\lambda'}y+\Delta f(x,x-y)+\gamma+\lambda-\ep(x-y)\lambda').
\end{align*}
In addition,
\begin{align*}
A_{\lambda'}y +\gamma-\Delta f(x,x-y)-\lambda+\ep(x-y)\lambda'=\pa_xf(x)y-\Delta f(x,x-y)+(-\ep(x)+\ep(x-y)+\pa_x\ep(x)y)\lambda'
\end{align*}
Then
\begin{align*}
&\left|(A_{\lambda'}y+\gamma)^2-(\Delta f(x,x-y)+\lambda-\ep(x-y)\lambda')^2\right|\leq C y^2,
\end{align*}
and
\begin{align*}
&\left |A_{\lambda'}y+\Delta f(x,x-y)+\gamma+\lambda-\ep(x-y)\lambda'\right|\leq C(|y|+|2\lambda-(\ep(x)+\ep(x-y))\lambda')\\
&\leq C(|y|+2|(\lambda-\ep(x)\lambda'|+|\ep(x)-\ep(x-y)||\lambda'|\leq C(|y|+|\gamma|).
\end{align*}
\begin{align}\label{estadeaqui}
|I_{11}|\leq C\int_{|y|<1}\frac{|y|^3(|y|+|\gamma|)}{(y^2+(A_{\lambda'}y+\gamma)^2)(y^2+(\Delta f (x,x-y)+\lambda-\ep(x-y)\lambda')^2)}dy.
\end{align}
Since we can bound
\begin{align*}
\frac{|y|^4}{(y^2+(A_{\lambda'}y+\gamma)^2)(y^2+(\Delta f (x,x-y)+\lambda-\ep(x-y)\lambda')^2)}\leq C,
\end{align*}
the first integral in \eqref{estadeaqui} is easy to bound. In addition for $|\gamma|>4 (||f||_{L^\infty}+||\ep||_{L^\infty})$ we have that
$$(\Delta f+\lambda-\ep(x-y)\lambda')^2=(\Delta f+(\ep(x)-\ep(x-y))\lambda'+\gamma)^2\geq \frac{\gamma^2}{4},$$
so that, in this range
\begin{align*}
\frac{|y|^3|\gamma|}{(y^2+(A_{\lambda'}y+\gamma)^2)(y^2+(\Delta f (x,x-y)+\lambda-\ep(x-y)\lambda')^2)}\leq \frac{|y||\gamma|}{y^2+\frac{1}{4}\gamma^2},
\end{align*}
and we can estimate the second integral. In the range $|\gamma|\leq 4||A_{\lambda'}||_{L^\infty}$ we can apply lemma~\ref{In}. This concludes the proof of the bound of $I(x)$.

The  bound
\begin{align*}
J(x)=P.V.\int_{\R}\frac{y}{(y^2+(\Delta f(x,x-y)+\lambda+\ep(x-y)\lambda')^2}(\pa_x f(x-y)+\pa_x\ep(x-y)\lambda')dy.
\end{align*}

follows similar steps.

Then we have achieved the conclusion of lemma \ref{veloLinftyb}.
\end{proof}

\begin{lemma}\label{falta} Let $f$  and $\ep=c t$ be as in theorem \ref{existencialocal}. Then
the velocity $u_c^\sharp(x,\lambda)$  satisfies
\begin{align*}
|u_c^\sharp(x,\lambda)-u_c^\sharp(x,\lambda')|\leq Ct,
\end{align*}
where $C$ depends on $||f||_{H^4}$ but it does not depend on either $x$, $\lambda$ or $\lambda'$.
\end{lemma}

\begin{proof}

Recall that

\[ u_c^\sharp(x,\lambda)=\int_{-1}^1 k_\theta(x,y) \partial_x \theta dy d \lambda'.\]

Then it is enough to prove that the function $$h(x,\lambda)=\int_{\R}\frac{y}{y^2+\theta^2}\partial_x \theta dy$$ satisfies
$||\pa_\lambda h||_{L^\infty(dx)}\leq  |||f|||t $ for every $\lambda$. In addition, by Sobolev's embedding we reduce the problem to prove that  $||\pa_\lambda h||_{H^1(dx)}\leq |||f||| t $. We notice that
\begin{align*}
\pa_\lambda h(x,\lambda)= \ep(x)\int_{\R}\frac{2y\theta}{\dl^2}\pa_x \theta.
\end{align*}
The $L^2-norm$ of this function can be bounded in the same way we bounded $k^{31d}_2(x)$ in \eqref{k31d} and $N(x)$ in \eqref{N}. By taking a derivative with respect to $x$ we have
\begin{align*}
&\pa_x\pa_\lambda h(x,\lambda)= \ep_x \int_{\R}\frac{2y\theta}{\dl^2}\pa_x \theta+ \ep \big( \int_{\R}\frac{2y\theta}{\dl^2}\pa^2_x \theta+
\int_{\R}\frac{2y}{\dl^2}(\pa_x \theta)^2-2\int_{\R}\frac{2y\theta^2 }{\dl^3}\pa_x \theta \big).
\end{align*}

The first two terms can be bounded exactly as we bound $N(x)$ in \eqref{N}. The third can be bounded in the same way that $k^{31d}_2(x)$ in \eqref{k31d} and $N(x)$ in \eqref{N}. The last term can be bounded by using a similar strategy, though a different pseudodifferential operator arises.

\end{proof}

\subsection{Estimates on the coefficient $a(x,t)$.}

The function $a(x,t)$ is given by the expression
\begin{align*}
a(x,t)=P.V.\int_{\R}K(x,y)dy
\end{align*}
where the principal value is taken at $0$ and at the infinity. We need to prove the next lemma
\begin{lemma}\label{alemma} Let $f$ and $\ep=c t$ be as  in theorem \ref{existencialocal}. The following estimate holds:
\begin{align*}
||\pa_x a||_{H^2}\leq |||f|||.
\end{align*}
\end{lemma}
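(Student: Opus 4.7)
I would estimate $\|\partial_x a\|_{H^2}$ by bounding $\|\partial_x^k a\|_{L^2}$ separately for $k=1,2,3$ via differentiation under the integral sign, followed by the in/out splitting and change-of-variable strategy already developed in Appendices \ref{apendice3} and \ref{apendice4}. Writing the kernel as $K_\ep(x,y)=F_\ep(\Delta f(x,y),\,x-y)$ for a fixed elementary function $F_\ep$ of two variables, one has
\begin{equation*}
\partial_x K_\ep=\partial_u F_\ep\cdot f'(x)+\partial_v F_\ep,\qquad \partial_y K_\ep=-\partial_u F_\ep\cdot f'(y)-\partial_v F_\ep,
\end{equation*}
so that $\partial_x K_\ep+\partial_y K_\ep=\partial_u F_\ep\cdot(f'(x)-f'(y))$. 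A Taylor expansion of the arctan and log terms in $1/(x-y)$ gives the cancellations $K_\ep(x,y)=O(1/(x-y))$ and $\partial_u F_\ep\cdot \Delta f'=O(\Delta f/(x-y)^3)$ at infinity, which makes the integration by parts in $y$ legitimate in principal value and yields
\begin{equation*}
\partial_x a(x,t)=-\int_\R \partial_u F_\ep(\Delta f,\,x-y)\,(f'(x)-f'(y))\,dy,
\end{equation*}
together with the clean simplification $\partial_u F_\ep=(4\pi\ep^2)^{-1}\{-2\arctan(\Delta f/(x-y))+\arctan((2\ep+\Delta f)/(x-y))+\arctan((\Delta f-2\ep)/(x-y))\}$, which vanishes identically at $\Delta f=0$. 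Iterating the $\partial_x+\partial_y$ trick produces analogous formulas for $\partial_x^2 a$ and $\partial_x^3 a$ as integrals of explicit arctan/rational kernels against products of $\Delta$-differences of $f',f'',f'''$.

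Next I would bound each of these integrals by splitting into the in-part $\{|x-y|<1\}$ and the out-part $\{|x-y|>1\}$. On the out-part, the combination of arctan and log expressions has cancellations to sufficient order in $1/(x-y)$ (already computed in the proof of Lemma \ref{descomposicion}), so that Minkowski's inequality and Lemma \ref{yelambda} close the estimate. On the in-part, rescale $y=x+\ep w$ and invoke the change-of-variable lemmas \ref{121in}, \ref{121inbis}, \ref{11in} with $a=2$ or $3$ (or their $L^\infty$ variants \ref{121ino}, \ref{121inbiso}, \ref{11ino} whenever the integral is pointwise rather than convolutional): the prefactor $\ep^{-2}$ in $\partial_u F_\ep$ is absorbed by the structural vanishing $\partial_u F_\ep|_{\Delta f=0}=0$, combined with the linear behaviour $\Delta f(x,y)\sim f'(x)(y-x)$ and $\Delta f'(x,y)\sim f''(x)(y-x)$ near the diagonal, so the resulting bounds are uniform in $\ep$.

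The main obstacle will be the combinatorial explosion when taking the second and third $x$-derivatives of $K_\ep$: each fresh $\partial_x$ either hits a $\Delta f$ (producing a higher derivative of $f$) or a $(x-y)$ factor (producing a new, more singular rational kernel), so that $\partial_x^3 a$ splits into a sizeable list of terms each of which must be classified in the in/out framework. Because $a(x,t)$ itself carries no $f$-prefactor, however, each such term loses at most three derivatives of $f$, comfortably within the $H^4$ control on $f$, and every kernel that appears fits the template already handled by Lemmas \ref{121in}--\ref{hilbert}; no new commutator or pseudo-differential machinery is needed. Summing the individual bounds and using Sobolev embedding then gives
\begin{equation*}
\|\partial_x a\|_{H^2}\leq C\bigl(1+\|f\|_{H^4}\bigr)^N e^{C(1+\|f\|_{H^4})}=|||f|||,
\end{equation*}
as required.
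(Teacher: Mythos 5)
Your reduction is in substance the same as the paper's: after Minkowski the paper bounds $\pa_x^3\int_\R \frac{y}{y^2+(\Delta f+\lambda)^2}\,dy$ uniformly in $\lambda$, and since the kernel is evaluated at $(x,x-y)$ every $x$-derivative automatically produces the differences $\Delta f'$, $\Delta f''$, $\Delta f'''$ — exactly what your $(\pa_x+\pa_y)$ integration-by-parts trick yields (your formula for $\pa_u F_\ep$ and the vanishing at $\Delta f=0$ are correct). The in/out splitting and the rescaling lemmas are also the right tools for most of the resulting terms.

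The gap is your closing claim that "every kernel that appears fits the template already handled by Lemmas \ref{121in}--\ref{hilbert}; no new commutator or pseudo-differential machinery is needed." That is false for the worst terms, and it is precisely where the paper's proof of lemma \ref{alemma} has to work. After the reduction, $\pa_x^3 a$ contains (in the paper's notation, see \eqref{k3}) the contributions $\int_\R y\,\frac{\Delta f+\lambda}{\dl^2}\,\pa_x^3\Delta f\,dy$ and $\pa_x^2 f(x)\int_\R\frac{y^2}{\dlx^2}\,\pa_x^2\Delta f\,dy$. The kernels $\frac{y\lambda}{\dl^2}$, $\frac{y\,\Delta f}{\dl^2}$, $\frac{y^2}{\dlx^2}$ have $L^1_y$-norm of order $1/|\lambda|$ (equivalently $1/\ep$ before averaging), and this is not absorbed by the vanishing $\pa_u F_\ep|_{\Delta f=0}=0$ together with $|\Delta f|\lesssim |y|$: since $H^4$ only gives $f'''\in C^{1/2}\cap L^2$, absolute-value estimates of the type you propose yield at best a bound of size $\ep^{-1/2}$, not a uniform one. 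The uniform bound comes from the cancellation between the local piece $g(x)\int k\,dy$ — which is computed exactly and is of size $1/|\lambda|$, e.g. $\int_\R\frac{y^2}{\dlx^2}\,dy=\frac{\pi}{2\sigma^2|\lambda|}$ — and the convolution piece; on the Fourier side their difference carries a multiplier of size $O(|\xi|)$ uniformly in $\lambda$, so one pays one extra derivative ($\|\Lambda\,\pa_x^3 f\|_{L^2}$, i.e. the full $H^4$ norm, contrary to your remark that only three derivatives are spent), and because the symbol depends on $x$ through $A(x)=\pa_xf(x)$ this step needs the explicit Fourier transforms of lemma \ref{psl} together with Hwang's theorem \ref{Hwan}. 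This is exactly the $N(x)$, $M(x)$, $N_2(x)$, $M_2(x)$ analysis of appendix \ref{apendice3} that the paper's proof of lemma \ref{alemma} invokes; note that lemma \ref{hilbert} only covers the odd-power kernels $y^{2a-1}/(\,\cdot\,)^a$, where the Hilbert-transform identity supplies the cancellation, and does not apply to the even-power or $\lambda$-weighted kernels above. Without this ingredient your plan does not close.
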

\begin{proof} We will use the same convection than  in appendix \ref{ordenmasaltosection}.  Recall that
we can express

\[ a(x,t)=\int_{-1}^1\int_{-1}^1 \frac{y}{y_2+\theta^2} dyd\lambda d\lambda' \]

and thus,  if we set

\[ \int  k_\theta^{ji}(x,y) dy=\bar{k}^{ij},\]

we need to show that for $j=1,2,3,4$  and every $i$.
\[\bar{k}^{ij} \in L^2 \]
The estimation in fact are often easier than in subsection~\ref{Afirstestimates}.  All the outer
integrals are automatic  since the terms $\partial_{j-i} \Delta \theta$ do not appear in the numerators. The inner
integrals can also be dealt with.

We sketch the case with $j=3$ which is the most singular and leave the rest to the energetic reader.  From the fourth terms
corresponding to \eqref{k31d} we can directly bound the inner integrals of $\bar{k}_1^{31},\bar{k}_3^{31},\bar{k}_4^{31}$ by means
of lemma~\ref{yelambda} whereas $\bar{k}_2^{31}$ is equal to $N$ in \eqref{N}, with  $g= \partial_x \theta^3$ directly. The other two terms
are  easily bounded. Namely we have the bounds,

\[ |\bar{k}^{32}_{in}(x) |=\int_{|y|\le 1} \frac {y  \theta \partial_x \theta (\partial_x \theta)^2+\theta \partial_x^2 \theta}
 {(y^2+\theta^2)^3}dy \le \frac{y^5+y \gamma^4}{(y^2+\theta^2)^3}\]

\[|\bar{k}_{in}^{33}(x)|=\int \frac {y \theta^3+\partial_x \theta^3}{(y^2+\theta^2)^4}dy \le C \int \frac{|y|^7+ y \gamma^6}{{(y^2+\theta^2)^4}} dy. \]

For $a=3,4$, the terms with $y|\gamma|^{2a-2}$ in the numerator are directly bounded by lemma~\ref{In}, whereas the term with $|y|^{2a-1}$ is bounded in the usual two
steps. Firstly, we use Lemma~\ref{121in} to replace $\theta$ by $\theta_{lin}$ and then obtain the estimate with  lemma~\ref{hilbert}.
The rest of the derivatives are bounded in the same way.

\end{proof}

\section{Symbols and Estimates}

\subsection{Fourier transform of $K^{c,c'}_{A}(x)$}\label{apendice5}

\begin{lemma}\label{fkcca}Let $K^{c',c}_{A}$ the function given by the expression
\begin{align*}
K^{c,c'}_A(x)=\frac{1}{4\pi}\int_{-1}^{1}\int_{-1}^1 \frac{y}{y^2+(Ay+c't\lambda' y +ct(\lambda-\lambda'))^2}d\lambda d\lambda'.
\end{align*}
Then
\begin{align*}
\widehat{K^{c,c'}_{A}}(\xi)=\frac{-i\text{sign}(\xi)}{4\cdot 2\pi c t |\xi|}\int_{-1}^{1}
\left(2-e^{2\pi\sigma_{\lambda'}ct|\xi|(-1-\lambda')(A_{\lambda'}i\text{sign}(\xi)+1)}
-e^{2\pi\sigma_{\lambda'}ct|\xi|(1-\lambda')(A_{\lambda'}i\text{sign}(\xi)-1)}\right),
\end{align*}
and
\begin{equation}\label{FK0}
\widehat{K}^{c,0}_A(\xi)=\frac{-i\sign(\xi)}{2\pi c t|\xi|}\left(1+\frac{1}{4\pi c t |\xi|}\left(e^{-4\pi\sigma c t |\xi|}\left(\cos(4\pi\sigma A c t |\xi|)-A\sin(4\pi\sigma A c t |\xi|)\right)-1\right)\right).
\end{equation}
where
\begin{align*}
A_{\lambda'}=A+c't\lambda',&& \sigma_{\lambda'}=\frac{1}{1+A_{\lambda'}^2}.
\end{align*}

\end{lemma}

\begin{proof}
We first notice that, if we call
\begin{align*}
\gamma=ct|\lambda-\lambda'|,
\end{align*}
we can write
\begin{align*}
&y^2+(Ay+c'\lambda'ty+ct(\lambda-\lambda'))^2= (1+(A+c'\lambda't)^2)y^2+c^2t^2(\lambda-\lambda')^2+2(A+c'\lambda't')yct(\lambda-\lambda')\\
&=\frac{1}{\sigma_{\lambda'}}\left(y^2+2\sigma_{\lambda'}A_{\lambda'}\gamma\text{sign}(\lambda-\lambda')y+\sigma_{\lambda'}\gamma^2\right)
=\frac{1}{\sigma_{\lambda'}}\left((y+\sigma_{\lambda'}A_{\lambda'}\gamma\text{sign}(\lambda-\lambda'))^2+\sigma_{\lambda'}\gamma^2
-\sigma_{\lambda'}^2A_{\lambda'}^2\gamma^2\right)\\
&=\frac{1}{\sigma_{\lambda'}}\left((y+\sigma_{\lambda'}A_{\lambda'}\gamma\text{sign}(\lambda-\lambda'))^2+\sigma_{\lambda'}^2\gamma^2\right).
\end{align*}
And then
\begin{align*}
K^{c,c'}_A(x)=\frac{1}{4\pi}\int_{-1}^1\int_{-1}^1\frac{\sigma_{\lambda'}y}{(y+\mu)^2+\nu^2}d\lambda d\lambda'
\end{align*}
with $\mu=\sigma_{\lambda'}A_{\lambda'}\gamma \text{sign}(\lambda-\lambda')$ and $\nu=\sigma_{\lambda'}\gamma.$

With these notations Fourier transforms are easier as they resemble the relation between Poisson and Abel kernels. We can compute that
\begin{align*}
&\mathcal{F}\left[\frac{y}{(y+\mu)^2+\nu^2}\right](\xi)=
\mathcal{F}\left[\frac{y+\mu}{(y+\mu)^2+\nu^2}\right](\xi)-\mu\mathcal{F}\left[\frac{1}{(y+\mu)^2+\nu^2}\right](\xi)\\
&=e^{2\pi i\xi\mu}\left(\mathcal{F}\left[\frac{y}{y^2+\nu^2}\right](\xi)-\mu\mathcal{F}\left[\frac{1}{y^2+\nu^2}\right](\xi)\right)
=e^{2\pi i\xi\mu}e^{-2\pi\nu|\xi|}\pi\left(-i\text{sign}(\xi)-\frac{\mu}{\nu}\right).
\end{align*}
Therefore
\begin{align}\label{alprincipio}
\widehat{K}^{c,c'}_{A}(\xi)=\frac{1}{4}\int_{-1}^{1}\int_{-1}^1\sigma_{\lambda'}e^{2\pi i \sigma_{\lambda'}A_{\lambda'}ct(\lambda-\lambda')\xi}e^{-2\pi\sigma_{\lambda'}ct|\lambda-\lambda'||\xi|}
\left(-i\text{sign}(\xi)-A_{\lambda'}\text{sign}(\lambda-\lambda')\right)d\lambda d\lambda'.
\end{align}

For visualization set  $ct\sigma_\lambda' 2 \pi i |\xi|=a$ in the next estimates
\begin{align*}
&\int_{-1}^1  e^{a A_{\lambda'}i \sign(\xi) (\lambda-\lambda')}e^{- a|\lambda-\lambda'| }
\left(-i\sign(\xi)-A_{\lambda'}  \text{sign}(\lambda-\lambda')\right)d\lambda\\
&=\int_{-1}^{\lambda'}e^{ a (1+i A_{\lambda'} \sign(\xi))  (\lambda-\lambda')}
\left(-i\text{sign}(\xi)+A_{\lambda'}\right)d
\lambda\\
&+\int_{\lambda'}^{1}e^{a(-1+iA_{\lambda'} \sign(\xi) )(\lambda-\lambda')}
\left(-i\text{sign}(\xi)-A_{\lambda'}\right)d\lambda,\\
&=\frac{-i\sign(\xi)+A_{\lambda'}}{a(1+iA_{\lambda'}\text{\sign}(\xi))}
\left(1-e^{a(-1-\lambda')(A_{\lambda'}i\text{sign}(\xi)+1)}\right)\\
&+\frac{-i\text{sign}(\xi)-A_{\lambda'}}{a(iA_{\lambda'}\text{\sign}(\xi)-1)}
\left(e^{a (1-\lambda')(A_{\lambda'}i\text{sign}(\xi)-1)}-1\right)\\
&=-i\text{sign}(\xi)\frac{1}{a}\left(2-e^{a(-1-\lambda')(A_{\lambda'}i\text{sign}(\xi)+1)}
-e^{a(1-\lambda')(A_{\lambda'}i\text{sign}(\xi)-1)}\right).
\end{align*}

Thus
\begin{align*}
\widehat{K}^{c,c'}_{A}(\xi)=\frac{-i\text{sign}(\xi)}{4\cdot 2\pi c t |\xi|}\int_{-1}^{1}
\left(2-e^{2\pi\sigma_{\lambda'}ct|\xi|(-1-\lambda')(A_{\lambda'}i\text{sign}(\xi)+1)}
-e^{2\pi\sigma_{\lambda'}|\xi|(1-\lambda')(A_{\lambda'}i\text{sign}(\xi)-1)}\right)d \lambda'.
\end{align*}

This proves the first identity of lemma \ref{fkcca}.

By taking $c'=0$ we have that
\begin{align}
\widehat{K}^{c,0}_{A}(\xi)=\frac{-i\text{sign}(\xi)}{4\cdot 2\pi c t |\xi|}\int_{-1}^{1}
\left(2-e^{2\pi\sigma ct|\xi|(-1-\lambda')(A i\text{sign}(\xi)+1)}
-e^{2\pi\sigma |\xi|(1-\lambda')(A i\text{sign}(\xi)-1)}\right) d\lambda',
\end{align}
where $\sigma=\frac{1}{1+A^2}$. Integrating in $\lambda'$ yields

\begin{align*}
I=\int_{-1}^1 e^{-2\pi \sigma c t (1+\lambda')|\xi|(1+iA\sign(\xi))}d\lambda'=\frac{1}{2\pi\sigma c t|\xi| (1+iA\sign(\xi))}\left(1-e^{-4\pi \sigma c t (1+iA\sign(\xi))|\xi|}\right)
\end{align*}
and
\begin{align*}
II=\int_{-1}^1e^{2\pi \sigma c t (-1+\lambda')|\xi|(1-iA\sign(\xi))}d\lambda'=\frac{1}{2\pi\sigma c t|\xi| (1-iA\sign(\xi))}\left(1-e^{-4\pi \sigma c t (1-iA\sign(\xi))|\xi|}\right).
\end{align*}
However
\begin{align*}
I+II=\frac{2}{2\pi\sigma ct|\xi|} \text{Re}\left(I\right).
\end{align*}
Now if we notice that

\[ \text{Re}\left(\frac{1}{1+iA\sign(\xi)}\right)=\sigma, \quad \text{Re}\left(\frac{e^{-4\pi\sigma c t i A \xi}}{1+iA\sign(\xi)}\right)=\sigma\left(\cos\left(4\pi\sigma A c t |\xi|\right)-A\sin\left(4\pi\sigma c t A |\xi|\right)\right),\]
  equality \eqref{FK0} follows.

\end{proof}

\subsection{Estimations of the various symbols }\label{Estimates}
In this section we will use the notation $t\xi=\tau$.   In the following estimates:
\begin{enumerate}
\item $A$ is function in $H^3$.
\item The function $c$ is as in the statement of theorem \ref{existencialocal}.
\item We can consider that the time $t<<1.$
\end{enumerate}

To alleviate the notation we introduce the following auxiliar function
\begin{equation}\label{hdef}
\begin{aligned}h(x,\tau)&=\frac{1}{c\tau}\left\{1+\frac{1}{4\pi c\tau}\left(e^{-4\pi c\tau\sigma}\left(\cos(4\pi c\tau\sigma A)-A\sin(4\pi c\tau\sigma A)\right)-1\right)\right\}\\&=
\frac{1}{c\tau}(1+h_2(\tau))
\end{aligned}
\end{equation}
with
\[h_2(x,\tau)=\frac{1}{4\pi c\tau}\left(-1+e^{-4\pi \sigma c \tau}\left(\cos(4\pi\sigma A c\tau)-A\sin(4\pi \sigma A c\tau)\right)\right)\]
and $\sigma=\frac{1}{1+A^2}.$

We emphasis that $h$ and $h_2$ depend on $x$ just through $A$ and $c$.

Notice that \eqref{FK0} implies that
\[ 2\pi i \sign(\xi)\hat{K}^{c,0}_{A}(\xi)=h(x,\tau). \]
We will omit that $h$ depends on $A$ as well.  We study the regularity of the function $h$ in detail.

\begin{lemma} \label{identities}The following identities holds:
\begin{equation}
h_2(x,\tau)=-\int_{0}^1e^{-4\pi\sigma c\tau\tau_1}\cos(4\pi\sigma A c \tau\tau_1)d\tau_1,\label{little}
\end{equation}
\begin{align}\label{big}
&h(x,\tau)=4\pi\sigma\int_{0}^1\int_{0}^1 e^{-4\pi\sigma c \tau \tau_1\tau_2}\tau_1\left(\cos(4\pi \sigma A c \tau\tau_1\tau_2)+A\sin(4\pi \sigma A c \tau\tau_1\tau_2)\right)d\tau_2d\tau_1.
\end{align}
\end{lemma}

The following estimate not only gives us how the symbol $p$ grows but it also
implies lemma~\ref{Upbound}, the key in showing that
$p_+$ is positive for small times.

\begin{lemma}\label{cotakc0}Let $\varphi,h$ defined as above. The following estimates hold:
\begin{equation}
\label{cotakc01}
\begin{aligned}
&|h(x,\tau) |\leq \frac{\frac{1}{c}}{1+\tau}+\frac{2|A|+5+8\pi}{1+(\tau)^2},
\\ &|\pa_A h(x,\tau)|+|\pa_c h(x,\tau)| \le \frac{\lar}{1+\tau},
\end{aligned}
\end{equation}
and
\begin{equation} \label{cotakc02}
\left(\varphi(\tau)-h(x,\tau)\right)\geq \frac{1-\frac{1}{c}}{1+\tau}+\frac{B-(2|A|+5+8\pi)}{1+\tau^2}.
\end{equation}

\end{lemma}

\begin{proof}
We start with  $\tau=t|\xi|\geq 1$. Since $c \geq  1$ we have that
\begin{align*}
\frac{1}{c\tau}\left(1+\frac{1}{4\pi c \tau}\left(e^{-4\pi\sigma c\tau}\left(\cos(4\pi \sigma A c \tau)-A\sin(4\pi \sigma A c \tau)\right)-1\right)\right)\leq \frac{1}{c\tau}+\frac{|A|+2}{\tau^2}.
\end{align*}
But since for  $\tau\geq 1$, $\frac{1}{\tau^2}\leq \frac{2}{1+\tau^2}$ and $\frac{1}{\tau}=\frac{1}{1+\tau}+\frac{1}{\tau}-\frac{1}{1+\tau}=\frac{1}{1+\tau}+ \frac{1}{\tau +\tau^2}\leq \frac{1}{1+\tau}+\frac{1}{1+\tau^2}$. Then
\begin{align*}
\frac{1}{c\tau}\left(1+\frac{1}{4\pi c \tau}\left(e^{-4\pi\sigma c\tau}\left(\cos(4\pi \sigma A c \tau)-A\sin(4\pi \sigma A c \tau)\right)-1\right)\right)\leq \frac{\frac{1}{c}}{1+\tau}+\frac{2|A|+5}{1+\tau^2}.
\end{align*}
For $\tau\leq 1$ we use the expression \eqref{alprincipio} to get uniform bounds on $h$ and its derivatives.

 The derivatives of $h$ for $\tau\ge 1$ are controlled by
\[ |\pa_c h| \le C\left( \frac{1}{c^2 \tau}+\frac{1}{c^3\tau^2}(1+|A|)+ \frac{1}{c^2 \tau^2} \tau(1+|A|)+ \tau (|A|+|A|^2) \le \frac{\lar}{\tau}\right),  \]

\[ |\pa_A h| \le  C\left( \frac{1}{c^2 \tau^2} \tau( |A|+|A|^2)\right). \]

 If we combine \eqref{cotakc01} with the definition of $\varphi$,  \eqref{cotakc02} follows as well.
\end{proof}

\begin{lemma}\label{derivativesh} Let $k=1,2,3$
\[ |\pa_\tau h(x,\tau)|+|\pa_{\tau,A} h|+ |\pa_{\tau,c} h| \le \lar \frac{1}{1+\tau^2}, \]
\[ \pa^k_x h(x,\tau) \le \lar \left(\frac{ |\pa_x c|}{1+\tau}+\frac{|\pa^k_x c|+|\pa^k_x A|}{1+\tau^2}\right),\]
\[ \pa^k_x \pa_\tau h(x,\tau) \le \lar \left(\frac{ |\pa_x c|}{1+\tau^2}+\frac{|\pa^k_x c|+|\pa^k_x A|}{1+|\tau|^3}\right),\]
where the constant $C$ depends on $|\pa^{i}_x c|+ |\pa_x^{i} A |$ for $i<k$.
\end{lemma}
\begin{proof}

For $\tau<1$ we use lemma \ref{identities} and the result follows. For $\tau>1$, from the expression
\begin{align}\label{explicacion}
h=\frac{1}{c\tau}-\frac{1}{4\pi c^2\tau^2}+\frac{1}{4\pi c^2\tau^2}e^{-4\pi \sigma \tau}\left(\cos(4\pi c\sigma A\tau)-A\sin(4\pi c\sigma A\tau)\right),
\end{align}
we see that $|\pa_\tau h|\leq \lar (1+\tau^2)^{-1}$. In order to bound the derivatives on $\sigma$ and $A$ of $\pa_\tau h$ we see that the two first terms in \eqref{explicacion} do not cause any difficulty. The third one in \eqref{explicacion} brings down a factor $\tau$ but since $c>1$ and $\sigma\geq \frac{1}{1+||A||_{L^\infty}}$ we still have exponential decay. In order to bound the derivatives with respect to $x$ of $h$ and $\pa_\tau h$ a similar argument applies.

\end{proof}

We recall that
\begin{align*}
p=&2\pi i \xi \hat{K}^{c,0}_{A}, &
p_b=&2\pi i \xi \left(p_{main}-p\right),&
p_{\textrm{good}}=&\frac{1}{|\xi|}p-\varphi(t|\xi|),
 \quad \text{and} \quad  p_+=-(1+|\xi|) p_{\textrm{good}}.\end{align*}
\begin{lemma}\label{pestimates}  Given $t>0$, the symbols $p_b, t\pa_x p_{main}, p_{good} \in \Hw$ with the following
estimates.
\begin{itemize}
\item [i)] $ \|p_b\|_{1,1}+\|p_{good}\|_{1,1} \le \lar$
\item [ii)] $ \|tp_{main}\|_{1,1}+\|t \pa_{x} p_{main}\|_{1,1}+\|\partial_\xi p_{main} \|_{1,0} \le \lar $

\end{itemize}
\end{lemma}

\begin{proof}

We start with the $L^\infty$ estimates (no $x$ derivatives).
The estimation of $p_b$ itself is the most subtle so we address it  first.

\begin{enumerate}
\item Estimation of the $L^\infty$ norm of $p_b$.

The fundamental theorem of calculus tell us that,
\begin{align*}
&p-p_{main}
\\ & =\frac{-i\sign(\xi)}{4\cdot 2\pi c t |\xi|}\int_{-1}^1\int_{0}^1\frac{d}{ds} e^{-2\pi\sigma_{s\lambda}ct|\xi|(1+\lambda)(1+iA_{s\lambda}sign(\xi))} dsd\lambda\\
&+\frac{-i\sign(\xi)}{4\cdot 2\pi c t |\xi|}\int_{-1}^1\int_{0}^1\frac{d}{ds} e^{-2\pi\sigma_{s\lambda}ct|\xi|(1-\lambda)(1-iA_{s\lambda}sign(\xi))} ds d\lambda\\
&\equiv T_1+T_2.
\end{align*}
Now we use the chain rule and that $\pa_s A_{s\lambda}=c't\lambda$ and $\pa_s \sigma_{s\lambda}=-2\sigma_{s\lambda}^2A_{s\lambda}c't\lambda$ to obtain that
\begin{align}\label{2pit1}
&2\pi i \xi T_1\nonumber
\\
&=\frac{1}{4ct}\int_{-1}^1 \int_{0}^1 2\sigma_{s\lambda}^2A_{s\lambda}c't\lambda 2\pi c t |\xi|(1+\lambda)(1+iA_{s\lambda}\sign(\xi))
e^{-2\pi\sigma_{s\lambda} c t|\xi|(1+\lambda)(1+iA_{s\lambda}\sign{\xi}) } ds d\lambda\nonumber\\
&+\frac{1}{4ct}\int_{-1}^1\int_{0}^1 c't\lambda i\sign(\xi) (-2\pi\sigma_{s\lambda} c t|\xi|(1+\lambda))e^{-2\pi\sigma_{s\lambda} c t|\xi|(1+\lambda)(1+iA_{s\lambda})\sign(\xi)}dsd\lambda\\
&\equiv T_{11}+T_{12}.
\end{align}
Now observe that the dangerous  $t$ in the denominator cancels out in both $T_{11}, T_{12}$. Thus, we  are entitled to take modulus and
obtain the elementary bound:
\begin{align*}
|2\pi i \xi T_1|\leq C\int_{-1}^1\int_{0}^1 2\pi\sigma_{s\lambda}c t |\xi|(1+\lambda) e^{-2\pi\sigma_{s\lambda}ct|\xi|(1+\lambda)}dsd\lambda\leq \lar.
\end{align*}
The estimation involving $T_2$ is exactly analogous and thus $\|p_b\|_{L^\infty} \le \lar$.

\item Estimation of the $L^\infty$-norm of  $\pa_\xi(2\pi i \xi( \hat{K}^{c,0}_A(\xi)-\hat{K}^{c,c'}_A(\xi)))$. We recall that
    \begin{align*}
&2\pi i \xi (\hat{K}^{c,0}_A(\xi)-\hat{K}^{c,c'}_A(\xi))\\
&=\frac{1}{4 c t}\int_{-1}^{1}
\left(e^{2\pi\sigma_{\lambda'}ct|\xi|(-1-\lambda')(A_{\lambda'}i\text{sign}(\xi)+1)}
+e^{2\pi\sigma_{\lambda'}|\xi|(1-\lambda')(A_{\lambda'}i\text{sign}(\xi)-1)}\right)d\lambda'\\
&+\frac{1}{4 c t }\int_{-1}^{1}
\left(e^{2\pi\sigma ct|\xi|(-1-\lambda')(A i\text{sign}(\xi)+1)}
+e^{2\pi\sigma (1-\lambda')(A i\text{sign}(\xi)-1)}\right)d\lambda'\\
&\equiv U_1+U_2.
\end{align*}
In this case we can bound $U_1$ and $U_2$ separately and it is enough to estimate $U_1$. In addition, we can split $U_1= U_{11}+U_{12}$ with $U_{11}=\frac{1}{4ct}\int_{-1}^1 e^{2\pi\sigma_{\lambda'}ct|\xi|(-1-\lambda')(A_{\lambda'}i\text{sign}(\xi)+1)}d\lambda'$ and it easy to see that the estimation of $U_{11}$ and $U_{12}$ follows similar steps.  We have that
\begin{align}\label{pau11}
&\pa_\xi U_{11} =\frac{1}{4c t}\int_{-1}^1 (-2\pi\sigma_{\lambda'}c t(1+\lambda')(\sign(\xi)+iA_{\lambda'})e^{2\pi\sigma_{\lambda'}ct|\xi|(-1-\lambda')(A_{\lambda'}i\text{sign}(\xi)+1)}d\lambda'\nonumber\\
& =\frac{1}{4}\int_{-1}^1 (-2\pi\sigma_{\lambda'}(1+\lambda')(\sign(\xi)+iA_{\lambda'})e^{2\pi\sigma_{\lambda'}ct|\xi|(-1-\lambda')(A_{\lambda'}i\text{sign}(\xi)+1)}d\lambda',
\end{align}
and then $|\pa_\xi U_{11}|\leq \lar$.

\item The $L^\infty$ estimate for $p_{good}$ follows directly from lemma~\ref{cotakc0} and the definition of $\varphi$.

\item To estimate $p_{main}$ we notice that $p_{main}=p_{b}+p$. We have already proved that $p_{b}$ is bounded and
recall that $p=\xi h(x,\tau)$. Thus $|p(x,\xi)|= \frac{1}{t} \tau h(x,\tau)$ and thus the estimate \eqref{cotakc01} implies that
$ \| tp_{main} \|_{L^\infty} \le \lar$. Similarly, when estimating $\partial_\xi p_{main}$, by our uniform estimate on $p_b$ we are reduce to estimate
$\partial_\xi p$. Now notice that  by the definition of $p$ and the chain rule,
$ \|\partial_\xi p\|_{L^\infty} =\frac{t}{t}\| \partial_\tau (\tau h)\| \le \lar $
where the last bound follows from the lemma~\ref{derivativesh}

\item  Finally, we deal with the derivatives respect to $x$. Firstly, observe that $tp(x,\tau)=\tau h(x,\tau)$ and thus the estimates of the derivatives in $x$ follow directly from those of $h$,  which are explicitly bounded in lemma~\ref{derivativesh}.  Hence we obtain that
\begin{equation}\label{p}  \|t p \|_{1,1}+ \|t \pa_x p \|_{1,1}+ \| \pa_\xi p_b \|_{1,0} \le \lar.
\end{equation}

 Next we look at the $x$ derivatives of $p_{b},\partial_\xi p_{b}$. We have done all the work
as in the expressions, of $T_{11},T_{12},U_1,U_2$ the only difficulty occurs when after the use of chain rule we differentiate $A$ and $c$. Thus we obtain that

\begin{equation}\label{pb}\|p_b\|_{1,1}+ \|\pa_x p_b\|_{1,1} \le \lar. \end{equation}

Since, $\frac{p}{\xi}=h(x,\tau)$ and $\varphi$ is explicit  lemma~\ref{cotakc0}, lemma~\ref{derivativesh} imply readily
the bounds for $\|p_{good}\|_{1,1} \le \lar$ and thus the claim i) follows. Claim ii), which deals with $p_{main}$, is  an straightforward consequence of \eqref{p} and \eqref{pb}.
 \end{enumerate}
\end{proof}

The following lemma is cumbersome as considering $p_+^\frac12$ instead of $p_+$ is less innocent
than it seems.  In fact  here is the only place where the existence of the  constant $c_\infty$ is required.

\begin{lemma} \label{symbolp_+} Let $2|A|+5+8\pi<\frac{B}{2}$. The symbols $p^\frac{1}{2}_+$ and $q=\pa_\xi p^\frac{1}{2}_+$ satisfy that for $0<\ep<1$ it holds that,
\begin{align*}
&\| t^{\frac12} p_+^\frac{1}{2} \|_{1,1} \le \lar,\quad ||\pa_\xi p_+^\frac{1}{2}||_{L^\infty(\R^2)}\leq \lar, \quad \|\pa_x \pa_\xi p_+^\frac{1}{2}\|_{L^\infty(\R^2)} \le \lar, \quad  \sup_{\xi\in\R}\left(\|\pa_x q \|_{H^2}+ \| \pa_{x} q \|_{\dot{H}^{-\ep}}\right) \le \lar,\\ &
||t^\frac{1}{2}\pa^2_x p_{+}^\frac{1}{2}||_{L^\infty(\R^2)}\leq \lar.
\end{align*}
\end{lemma}
\begin{proof}

We give the proof in the case $c\ge 1+\kappa$, and will explain at the end of the proof the modifications for the case $c=1$. We explain first
the $L^\infty$ bounds  then $\pa_xq \in \dot{H}^{-\ep}$ and finally how to control the $x$ derivatives of both symbols.

\begin{enumerate}
\item Since,
\begin{equation}\label{product}
tp_{+}(x,\xi)=(t+\tau)\left(\varphi(\tau)-h(x,\tau)\right)
\end{equation}
it follows from  \eqref{cotakc02}  that
\begin{equation}\label{p+infty}|p_+^\frac{1}{2}|\leq \frac{\lar}{\sqrt{t}}.\end{equation}
   Now we deal with $q=\pa_\xi  p_+^\frac{1}{2}(x,\xi)$.
By product rule for derivatives,
\begin{align}\label{chiderivative}
q(x,\xi)&=\frac{1}{2}\frac{sign(\xi)}{(1+|\xi|)^\frac{1}{2}}\left(\varphi(\tau)-h(x,\tau)\right)^\frac{1}{2}\nonumber\\
&+(1+|\xi|)^\frac{1}{2}\frac{t\sign(\xi)}{2}\left(\varphi(\tau)-h(x,\tau)\right)^{-\frac{1}{2}} \times \pa_{\tau}\left(\varphi(\tau)-h_2(\tau)\right).
\end{align}

The first term is innocent since $(\varphi-h)$ is bounded. For the second we notice,  that
\begin{equation}\label{keys}
t(1+|\xi|)^\frac12 \le t^\frac12(1+|\tau|)^\frac12, (\varphi-h)^{-\frac12} \le C (1+\tau )^{\frac12},
\end{equation}
where the second inequality comes from  lemma \ref{cotakc0}.  In addition, lemma~\ref{derivativesh} implies that $|\partial_\tau (\varphi-h)| \le \frac{\lar}{1+\tau^2}$. Thus, the desired
\begin{equation*}
|q(x,\xi)|\leq \lar.
\end{equation*}
follows.

\item That  $\pa_xq\in \dot{H}^{-\ep}$ follows from  the existence of a constant $q_\infty$ such that
$ q-q_\infty \in L^2$.
Since the only $x$ dependence of $q$ is through $A,c$ mwe will declare $q_\infty(t,\xi)=q(0,c_\infty, t,\xi)$.  Now by plugging into \eqref{chiderivative}  the bounds
 from lemmas \ref{cotakc0} amd  \ref{derivativesh},  it follows that  $|\nabla_{A,c}| q\le \lar$.
Then,  the mean value theorem applied to
$q$ as a function of $A,c$, yields  that for every $x \in \mathbb{R}$,
\[ |q(x)-q_\infty| \le \lar (|A(x)-0|+ |c(x)-c_\infty|) \]
and since $A$ is $H^3$ and $\int |c-c_\infty|^2 dx<C$ by assumption, the result follows. The proof to bound $||p^\frac12_+||_{\dot{H}^{-\ep}}$ follows similar steps.

\item Now we compute $\pa^k_x p^\frac{1}{2}_+(x,\xi)$ and $\pa^k_{x} q$.  By chain and product rule $\pa^k_x p^\frac{1}{2}_+(x,\xi)$  it is a sum of terms of the type $(1+|\xi|)^\frac12 (\varphi-h)^{\frac12-i} \Pi_{\sum \alpha _i=k} \pa^{\alpha_i} _x (\varphi- h)$ for $i=1,\ldots,k$. By \eqref{keys} and lemma \ref{derivativesh} the most singular term is

\[ (1+|\xi|)^\frac12 (\varphi-h)^{\frac12-k} |\pa_x h|^k  \le t^{-\frac12} \lar |c_x|+|A_x| \frac{(1+\tau)^k}{ (1+\tau)^k} \le  t^{-\frac12} \lar  \left(|c_x|+|A_x| \right).\]

We move to $q$.  We need to show that $ \pa_{x} q \in L^\infty$ for $p_+^\frac12 \in \Hw$ and that $\pa^k_x q \in L^2$ for $k=1,2,3$. By lemma~\ref{identities}
we only need to give  the details of the case  $\tau>1$.  Notice that when we differentiate in \eqref{chiderivative} the derivatives of the first term still remain innocent
as  $\pa_x^k\left(\varphi(\tau)-h(x,\tau)\right)^\frac{1}{2}$ has been shown to be bounded by $\sup_{1 \le i \le k}|\pa_x^i c|+|\pa_x^i A|$. For the second term, again product rule combined with lemma~\ref{cotakc0} and \ref{derivativesh} implies that the most singular term is

\[  t(1+|\xi|)^\frac12 (\varphi-h(x,\tau)^{-\frac12-k} |\pa_x h|^k \pa_{\tau}\left(\varphi(\tau)-h(x,\tau)\right) \le \lar \left(|\pa_x c|+ |\pa_x A| \right) \frac{ (1+\tau)^{k+1} } {(1+\tau)^k(1+\tau^2)}. \]

Notice  that the terms $\pa^{\alpha_i} _x (\varphi- h) $ and $\pa^{\alpha_i} _x \pa_\tau(\varphi- h)$ will be less singular respect to $\tau$  and  will
be controlled by $\lar (|\pa^{\alpha_i}_x c|+ |\pa^{\alpha_i}_x A|)$.

\end{enumerate}
Finally, we mention that in the case $c=1$, $\pa_x c=0$, thus their derivatives are bounded by powers of $\frac{1}{1+\tau^2}$ and lemma~\ref{cotakc0} implies that $(\varphi-h)^{-1} \le \frac{C}{B} (1+\tau^2)$.
The terms appearing in our various derivatives compensate exactly in the same way.

   \end{proof}

\begin{tabular}{l}
\textbf{Angel Castro} \\
 {\small Instituto de Ciencias Matem\'aticas-CSIC-UAM-UC3M-UCM}\\
  {\small Email: angel\_castro@icmat.es} \\
\\
\textbf{Diego C\'ordoba} \\
  {\small Instituto de Ciencias Matem\'aticas} \\
 {\small Consejo Superior de Investigaciones Cient\'ificas} \\
 {\small C/ Nicolas Cabrera, 13-15, 28049 Madrid, Spain} \\
  {\small Email: dcg@icmat.es} \\
\\
\textbf{Daniel Faraco} \\
  {\small Departamento de Matem\'aticas} \\
 {\small Universidad Aut\'onoma de Madrid} \\
 {\small Instituto de Ciencias Matem\'aticas-CSIC-UAM-UC3M-UCM}\\
  {\small Email: daniel.faraco@uam.es} \\

\end{tabular}

\end{document}